\newtheorem{theorem}{Theorem}[section]
\newtheorem{corollary}[theorem]{Corollary}
\newtheorem{question}[theorem]{Question}
\newtheorem{lemma}[theorem]{Lemma}
\newtheorem{claim}{Claim}[section]
\begin{document}

\title{Sufficient spectral conditions 
for graphs being $k$-edge-Hamiltonian   
or $k$-Hamiltonian\thanks{This paper was firstly announced in September 2021, 
and was later published on Linear and Multilinear Algebra, 2022. 
This is a final version;   
see \url{https://doi.org/10.1080/03081087.2022.2093321}. 
E-mail addresses: ytli0921@hnu.edu.cn (Y. Li), 
ypeng1@hnu.edu.cn (Y. Peng, corresponding author).}}

\author{Yongtao Li,  Yuejian Peng$^{*}$  \\  
{\small School of Mathematics, Hunan University} \\
{\small Changsha, Hunan, 410082, P.R. China }  \\ 
}

\maketitle

\vspace{-0.5cm}

\begin{abstract}
A graph $G$ is $k$-edge-Hamiltonian 
if  any collection of vertex-disjoint paths with at most $k$ edges altogether belong to a Hamiltonian cycle in $G$. 
A graph $G$ is $k$-Hamiltonian if for all $S\subseteq V(G)$ 
with $|S|\le k$, the subgraph induced by $V(G)\setminus S$ 
has a Hamiltonian cycle. These two concepts are classical extensions 
for the usual Hamiltonian graphs. 
In this paper, we present some spectral sufficient 
conditions for a graph to be $k$-edge-Hamiltonian 
and $k$-Hamiltonian in terms of the adjacency 
spectral radius as well as the signless Laplacian spectral radius. 
Our results could be viewed as slight extensions of the recent theorems proved  
by Li and Ning [Linear Multilinear Algebra 64 (2016)], 
Nikiforov [Czechoslovak Math. J. 66  (2016)]  
and  Li, Liu and Peng [Linear Multilinear Algebra 66 (2018)]. 
Moreover, we shall prove a stability result for  
graphs being $k$-Hamiltonian, which could be regarded as 
a complement of two recent results of F\"{u}redi, Kostochka and Luo 
[Discrete Math. 340 (2017)] and [Discrete Math. 342 (2019)].
 \end{abstract}

{{\bf Key words:}   Spectral radius; Hamiltonian cycle; Extremal graph theory; Stability. }

{{\bf 2010 Mathematics Subject Classification.}  05C50, 15A18, 05C38.}

\section{Introduction}

Let $G=(V,E)$ be a simple graph with vertex set $V$ 
and edge set $E$. 
The order of $G$ is defined by $|V|$ and the size by $|E|$. 
We usually write $m$ and $n$ for the size and the order of $G$ respectively. 
For disjoint subsets $A,B\subseteq V$, we let $e(A,B)$ denote the number of edges of $G$ with one end-vertex in $A$ and the other in $B$. Let $d_G(v)$ (or $d(v)$ if there is no confusion)
be the degree of a vertex $v$ in $G$, and  let $\delta(G)$ be the minimum degree of $G$. 
We write $K_s$ for the complete graph on $s$ vertices, 
and $I_t$ for the independent set with $t$ vertices. 
Let $\omega (G)$ be the number of vertices of a largest 
complete subgraph in $G$. 
  For two vertex-disjoint graphs  $G$ and $H$, we use $G\cup H$ 
   to denote the disjoint union  of $G$ and $H$,  
 and  we write $G\vee H$ for the join graph of $G$ and $H$, 
 which is a graph obtained from $G\cup H$ by adding 
all edges between $G$ and $H$.  

The \emph{adjacency matrix} of $G$ is $A(G)=(a_{ij})_{n \times n}$, whose entries satisfy $a_{ij}=1$ if two vertices $i$ and $j$ are adjacent in $G$, and $a_{ij}=0$ otherwise. The \emph{characteristic polynomial} of $G$ is $P_G(x)=\det(xI -A(G))$, and the \emph{eigenvalues} of $G$ are the roots of $P_G(x)$ (with multiplicities). Clearly $A(G)$ is a real symmetric   matrix, so the eigenvalues of $G$ are real. The largest eigenvalue of $G$ is called the {\it spectral radius} of $G$ and is denoted by $\lambda (G)$.

Let $d_i$ be the degree of vertex $v_i$ and 
$D(G)$ be the  diagonal  matrix of degrees, that is, 
$D(G)=\mathrm{diag}(d_1,d_2,\ldots ,d_n)$. 
The {\it signless Laplacian matrix} of $G$ is defined as 
$Q(G)=D(G) + A(G)$, which is also a real symmetric   matrix, 
so the eigenvalues of $Q(G)$ are  real numbers. 
The eigenvalues of $Q(G)$ are said to be the 
signless Laplacian eigenvalues of $G$. 
The largest eigenvalue of $Q(G)$ is called {\it the 
signless Laplacian spectral radius} of $G$, and denoted by 
$q(G)$.

In the study of spectral graph theory, 
there are various matrices that are associated with a graph, 
such as the adjacency matrix,  the Laplacian matrix, 
signless Laplacian matrix and distance matrix. 
One of the main problems of algebraic graph theory 
is to determine the combinatorial properties of graphs that are reflected from 
the  algebraic properties of such matrices; 
see \cite{Bap2014,BH2012,GR2001} for more details. 
In this paper, we mainly focus on the 
adjacency spectral radius and signless Laplacian spectral radius.

\subsection{Hamiltonicity of graphs} 

A cycle passing through all  vertices of a graph is called a  Hamilton cycle. A graph containing a Hamilton cycle is called a Hamiltonian graph. A path passing through all  vertices of a graph is called a Hamiltonian path and a graph containing a Hamiltonian path is said to be traceable.  

Every complete graph on at least three vertices is evidently Hamiltonian, as the vertices of a Hamilton cycle can be selected one by one in an arbitrary order. 
Conditions to guarantee the existence of a Hamilton cycle have 
been studied actively. 
In particular, we may
ask how large the minimum degree can be in order to guarantee the existence of
a Hamilton cycle. The  celebrated  Dirac theorem 
\cite{Dirac52} answered this question. 
It states that every graph with $n\ge 3$ vertices and minimum degree at least $\frac{n}{2}$ has
a Hamilton cycle. 
The condition is sharp when we consider the complete bipartite 
graph with the parts of sizes $\lfloor \frac{n-1}{2}\rfloor$ and 
$\lfloor  \frac{n+1}{2}\rfloor$.

The following result is due to 
Ore \cite{ore60} and Bondy \cite{Bondy72} independently. 
It is a direct consequence of  Chv\'{a}tal's theorem 
on degree sequences; 
see \cite[p. 60]{Bondy76} for more details. 

\begin{theorem}[Ore \cite{ore60}, Bondy \cite{Bondy72}] \label{coroore}
Let $G$ be a graph on $n\ge 3$ vertices. If 
\[ e(G)\ge {n-1 \choose 2} +1, \]
then  $G$ has a Hamilton cycle or 
$G=K_1\vee (K_1\cup K_{n-2})$ or $n=5$ and $G=K_2 \vee I_3$.  
\end{theorem}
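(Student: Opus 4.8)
The plan is to prove this as a consequence of Chvátal's theorem on degree sequences, exactly as the remark after the statement indicates. I would first recall the statement of Chvátal's theorem: if $G$ has degree sequence $d_1\le d_2\le \cdots \le d_n$ with $n\ge 3$, and there is no index $i<n/2$ with $d_i\le i$ and $d_{n-i}\le n-i-1$ simultaneously, then $G$ is Hamiltonian. The standard contrapositive strategy is to assume $G$ is \emph{not} Hamiltonian and extract a ``bad index'' $i$ from Chvátal's condition, then translate the resulting degree constraints into an upper bound on $e(G)=\frac12\sum_j d_j$, showing that the bound forces $e(G)\le \binom{n-1}{2}$ unless $G$ is one of the two listed exceptional graphs.

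The key computation comes next. Given a bad index $i<n/2$ with $d_i\le i$ and $d_{n-i}\le n-i-1$, I would bound the degree sum by splitting the sequence into three blocks: the $i$ smallest degrees each at most $i$, contributing at most $i\cdot i$; the next block $d_{i+1},\dots,d_{n-i}$ each at most $n-i-1$, contributing at most $(n-2i)(n-i-1)$; and the top $i$ degrees each at most $n-1$, contributing at most $i(n-1)$. Summing and halving gives an upper bound on $e(G)$ that I would simplify and compare against $\binom{n-1}{2}+1$. The arithmetic should show that this maximum is at most $\binom{n-1}{2}$ for all valid $i$, so the hypothesis $e(G)\ge\binom{n-1}{2}+1$ cannot hold for a non-Hamiltonian $G$ unless the block-bounds are nearly all tight.

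The main obstacle — and the only genuinely delicate part — is the equality analysis that pins down the exceptional graphs. When $e(G)$ is forced up to exactly $\binom{n-1}{2}$ with a bad index present, I would need to determine precisely which degree sequences achieve the extremal value, and then argue that the corresponding graphs are exactly $K_1\vee(K_1\cup K_{n-2})$ and, in the sporadic case $n=5$, the graph $K_2\vee I_3$. I expect the index $i=1$ to be the critical one (forcing $d_1\le 1$, i.e.\ a vertex of degree at most one joined appropriately), which directly produces the cut-vertex structure of $K_1\vee(K_1\cup K_{n-2})$; the $n=5$ exception arises because for small $n$ a second choice of $i$ can also achieve tightness, and one must check that case separately by hand.

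As an alternative, since the statement is attributed to Ore and Bondy, one could instead invoke Ore's condition directly (if $d(u)+d(v)\ge n$ for every non-adjacent pair $u,v$, then $G$ is Hamiltonian) and run a closure-type argument, but I would favor the Chvátal degree-sequence route since it makes the extremal characterization transparent and the exceptional graphs fall out naturally from the equality cases of the degree-sum estimate.
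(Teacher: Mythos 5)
The paper gives no proof of this theorem at all---it cites Ore/Bondy and remarks only that the result is a direct consequence of Chv\'{a}tal's degree-sequence theorem (pointing to Bondy--Murty, p.~60)---and your proposal carries out exactly that indicated derivation: extract a bad index $i$ from the contrapositive of Chv\'{a}tal's condition, bound $2e(G)$ by the three-block estimate $i^2+(n-2i)(n-i-1)+i(n-1)$, and pin down the equality cases $i=1$ (yielding $K_1\vee(K_1\cup K_{n-2})$) and $i=2,\,n=5$ (yielding $K_2\vee I_3$). The plan is correct and the arithmetic works out as you expect; the only slip to fix in writing it up is the claim that the maximum is at most $\binom{n-1}{2}$ for \emph{all} valid $i$---for $i=1$ the bound is exactly $\binom{n-1}{2}+1$ (and for $i=2,\,n=5$ it is also $\binom{n-1}{2}+1$), which is precisely why these two cases survive the counting argument and must be resolved by the tightness analysis you describe.
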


In 1962, Erd\H{o}s improved the above result 
for graphs with given minimum degree. 

\begin{theorem}[Erd\H{o}s \cite{erdos62}] \label{thmerdos62}
Let $G$ be a graph of order $n$. 
If   the minimum degree $\delta (G)\ge \delta$ 
where $1\le \delta \le \frac{n-1}{2}$ and     
\begin{equation} \label{eq1}
 e(G)> \max\left\{ {n- \delta \choose 2}+\delta^2, 
{n- \lfloor \!\frac{n-1}{2} \! \rfloor \choose 2}+
\left\lfloor \! \frac{n-1}{2} \! \right\rfloor^2 \right\},  
\end{equation}
then $G$ has a Hamilton cycle. 
\end{theorem}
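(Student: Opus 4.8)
The plan is to deduce the theorem from Chv\'{a}tal's theorem on degree sequences, which (as the authors note above) is the combinatorial engine behind Theorem~\ref{coroore}. Recall that Chv\'{a}tal's theorem asserts: if $G$ has degree sequence $d_1\le d_2\le \cdots \le d_n$ with $n\ge 3$, and there is no index $i$ with $1\le i<\tfrac{n}{2}$ satisfying simultaneously $d_i\le i$ and $d_{n-i}\le n-i-1$, then $G$ is Hamiltonian. I would argue by contraposition: assume $G$ is \emph{not} Hamiltonian, so that such an index $i$ exists, and then show that the hypothesised edge count forces a contradiction.

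So suppose $G$ has no Hamilton cycle, and fix an index $i$ with $1\le i<\tfrac{n}{2}$, $d_i\le i$ and $d_{n-i}\le n-i-1$. First I would pin down the range of $i$: since $\delta\le \delta(G)=d_1\le d_i\le i$ we get $i\ge \delta$, and $i<n/2$ together with integrality gives $i\le \lfloor\tfrac{n-1}{2}\rfloor$. Next I would bound the number of edges by splitting the degree sum into three blocks dictated by the monotone ordering: for $j\le i$ we have $d_j\le d_i\le i$; for $i<j\le n-i$ we have $d_j\le d_{n-i}\le n-i-1$; and for $j>n-i$ we use only the trivial bound $d_j\le n-1$. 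This yields
\begin{equation*}
2e(G)=\sum_{j=1}^{n}d_j\le i\cdot i+(n-2i)(n-i-1)+i(n-1).
\end{equation*}
A short expansion shows the right-hand side equals $(n-i)(n-i-1)+2i^2$, so that
\begin{equation*}
e(G)\le \binom{n-i}{2}+i^2=:f(i).
\end{equation*}

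Finally I would optimise $f$ over the admissible range $\delta\le i\le \lfloor\tfrac{n-1}{2}\rfloor$. Viewing $f$ as a real quadratic in $i$, one checks $f''(i)=3>0$, so $f$ is convex and its maximum over any interval is attained at an endpoint. Hence
\begin{equation*}
e(G)\le \max\left\{ f(\delta),\, f\!\left(\left\lfloor\tfrac{n-1}{2}\right\rfloor\right)\right\}
=\max\left\{\binom{n-\delta}{2}+\delta^2,\ \binom{n-\lfloor\frac{n-1}{2}\rfloor}{2}+\left\lfloor\tfrac{n-1}{2}\right\rfloor^2\right\},
\end{equation*}
which contradicts the assumed strict lower bound \eqref{eq1} on $e(G)$; therefore $G$ is Hamiltonian. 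The only genuinely delicate point is the edge-counting step: one must invoke the monotonicity of the degree sequence in exactly the right way so that the three crude per-block estimates collapse to the clean quantity $\binom{n-i}{2}+i^2$, after which the convexity observation that singles out the two competing endpoint values is routine. All the real depth is packaged inside Chv\'{a}tal's theorem itself; were that tool unavailable, one would instead run a rotation--extension (P\'{o}sa-type) argument, and that would become the main obstacle.
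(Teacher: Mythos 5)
Your proof is correct, but note that the paper itself offers no proof of this statement: Theorem~\ref{thmerdos62} is quoted verbatim from Erd\H{o}s's 1962 paper, so there is no internal argument to compare against. Your derivation from Chv\'{a}tal's theorem is the standard modern route, and every step checks out: the contrapositive of Chv\'{a}tal gives an index $i$ with $d_i\le i$ and $d_{n-i}\le n-i-1$; the bound $i\ge\delta$ follows from $d_1\ge\delta$ and $i\le\lfloor\frac{n-1}{2}\rfloor$ from integrality of $i<n/2$; the three-block degree-sum estimate $2e(G)\le i^2+(n-2i)(n-i-1)+i(n-1)$ does expand exactly to $(n-i)(n-i-1)+2i^2$, giving $e(G)\le\binom{n-i}{2}+i^2$; and the convexity of $f(i)=\binom{n-i}{2}+i^2$ (with $f''=3$) correctly pushes the maximum to the two endpoints $i=\delta$ and $i=\lfloor\frac{n-1}{2}\rfloor$, which are precisely the two terms in \eqref{eq1}. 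The only historical caveat is that this is not Erd\H{o}s's own argument --- his 1962 proof predates Chv\'{a}tal's 1972 theorem and runs on a P\'{o}sa-type rotation--extension analysis, as you anticipate in your closing remark; your packaging of that depth into Chv\'{a}tal's theorem is exactly how the paper treats the neighboring Theorem~\ref{coroore}, which it likewise describes as a direct consequence of Chv\'{a}tal's degree-sequence theorem.
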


We remark here that 
the condition $\delta \le \frac{n-1}{2}$ is reasonable 
since if $\delta > \frac{n-1}{2}$ then  $ \delta \ge \frac{n}{2}$, 
the well-known Dirac theorem guarantees that  $G$ must be Hamiltonian. 
To see the sharpness of the bound in Theorem \ref{thmerdos62}, 
we consider the graph 
$H_{n,\delta}$ obtained from a copy of $K_{n- \delta} $ 
by adding an independent set of $\delta$ vertices with degree $\delta$ 
each of which is adjacent to the same $\delta$ vertices in $K_{n- \delta}$. 
In the language of graph join and union, 
that is, 
\begin{equation} \label{defh}
  H_{n,\delta}:=K_{\delta} \vee ( K_{n-2\delta}\cup I_{\delta}).
  \end{equation}
Clearly, $H_{n,\delta}$ does not contain a Hamilton cycle 
and  $e(H_{n, \delta})={n- \delta \choose 2} + \delta^2$.  
When $n\ge 6 \delta$, we can see that $e(H_{n,\delta})$ 
attains the maximum in (\ref{eq1}).  
Thus, we can get the following corollary. 

\begin{corollary}[Erd\H{o}s] \label{coro34}
Let $\delta \ge 1$  and $n\ge 6 \delta$. 
If $G$ is an $n$-vertex graph with $\delta (G)\ge \delta$ 
and 
$e (G)\ge e(H_{n,\delta})$, 
then either $G$ has a Hamilton cycle
or $G=H_{n,\delta}$. 
\end{corollary}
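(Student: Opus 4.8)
The plan is to read Corollary~\ref{coro34} off Theorem~\ref{thmerdos62}, the only genuine work being the boundary (equality) case: Theorem~\ref{thmerdos62} assumes the strict inequality $e(G)>\max\{\cdots\}$, whereas the corollary assumes only $e(G)\ge e(H_{n,\delta})$. Writing $f(i)=\binom{n-i}{2}+i^{2}$, the two quantities inside the maximum in (\ref{eq1}) are $f(\delta)$ and $f(\lfloor (n-1)/2\rfloor)$, and $e(H_{n,\delta})=f(\delta)$. I would first establish the arithmetic claim asserted in the text: for $n\ge 6\delta$ one has $f(\delta)\ge f(\lfloor(n-1)/2\rfloor)$, so that the maximum in (\ref{eq1}) equals $e(H_{n,\delta})$. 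Note also that $n\ge 6\delta$ forces $\delta\le n/6<\frac{n-1}{2}$, so Theorem~\ref{thmerdos62} is applicable.

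This arithmetic step is the routine part. Expanding $f$ shows it is a strictly convex quadratic in $i$ (the coefficient of $i^{2}$ is $3/2$), so on the range $\delta\le i\le \frac{n-1}{2}$ its maximum is attained at one of the two endpoints, which is exactly why (\ref{eq1}) records only those two values. Comparing the endpoint values and clearing the floor, the inequality $f(\delta)\ge f(\lfloor(n-1)/2\rfloor)$ reduces to $(n-2\delta)(n-6\delta)\ge 0$, which holds precisely because $n\ge 6\delta>2\delta$; this is where the hypothesis $n\ge 6\delta$ enters. Once this is in hand, the case $e(G)>e(H_{n,\delta})$ is immediate: then $e(G)>\max\{\cdots\}$ and Theorem~\ref{thmerdos62} supplies a Hamilton cycle.

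It remains to treat $e(G)=e(H_{n,\delta})=f(\delta)$ and to show that a non-Hamiltonian such $G$ must equal $H_{n,\delta}$; this is the heart of the argument. Here I would invoke Chv\'{a}tal's degree-sequence theorem in contrapositive form: a non-Hamiltonian $G$ has an index $i<n/2$ with at least $i$ vertices of degree $\le i$ and at least $n-i$ vertices of degree $\le n-i-1$. Since $\delta(G)\ge\delta$ forces $i\ge\delta$, the classical counting bound (the $n-i$ large-degree vertices span at most $\binom{n-i}{2}$ edges, while the $i$ small vertices are incident to at most $i^{2}$ edges) gives $e(G)\le f(i)$; combined with $e(G)=f(\delta)$, convexity, and $f(\delta)>f(\lfloor(n-1)/2\rfloor)$, this pins $i=\delta$. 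Equality in the bound $e(G)\le \binom{n-\delta}{2}+\delta^{2}$ then forces the $n-\delta$ large-degree vertices to induce a clique $K_{n-\delta}$ and the remaining $\delta$ vertices to form an independent set, each joined to exactly $\delta$ vertices of the clique. The final and hardest step is to upgrade this to $G=H_{n,\delta}$, i.e.\ to show that these $\delta$ independent vertices must all attach to the \emph{same} $\delta$-subset of $K_{n-\delta}$: if their neighbourhoods were not identical, the abundance of clique vertices ($n-2\delta\ge 4\delta$) would let one thread all $\delta$ independent vertices into distinct gaps of a cyclic ordering of $K_{n-\delta}$, producing a Hamilton cycle and contradicting the non-Hamiltonicity. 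Making this insertion argument precise (essentially a Hall-type system-of-distinct-gaps condition on the cyclic ordering) is the main obstacle, and the boundary $n=6\delta$, where $f(\delta)=f(\lfloor(n-1)/2\rfloor)$ may hold and so $i$ need not be forced to $\delta$, requires a separate direct check.
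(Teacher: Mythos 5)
Your first part matches the paper: the paper's entire justification for Corollary~\ref{coro34} is the observation that for $n\ge 6\delta$ the maximum in (\ref{eq1}) equals $e(H_{n,\delta})$, after which Theorem~\ref{thmerdos62} disposes of every graph with $e(G)>e(H_{n,\delta})$; the equality case is simply attributed to Erd\H{o}s and never argued in the text. Your convexity reduction is sound, but your factorization is slightly off: computing exactly, $f(\delta)-f\bigl(\lfloor (n-1)/2\rfloor\bigr)$ equals $\tfrac{1}{8}(n-2\delta-1)(n-6\delta+1)$ for odd $n$ and $\tfrac{1}{8}(n-2\delta-2)(n-6\delta+4)$ for even $n$, and both are \emph{strictly} positive for every $n\ge 6\delta$. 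Hence the maximum in (\ref{eq1}) is attained strictly at $i=\delta$, your Chv\'atal index is always pinned to $\delta$, and the separate boundary check you fear at $n=6\delta$ never arises. Up through the equality analysis (Chv\'atal's contrapositive, the bound $e(G)\le\binom{n-i}{2}+i^2$, and equality forcing a clique $K_{n-\delta}$ plus an independent set of $\delta$ vertices each with exactly $\delta$ neighbours in the clique), your outline is correct and goes beyond anything written in the paper.

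The genuine gap is the final step, which you yourself flag as ``the main obstacle'': you never prove that non-identical neighbourhoods force a Hamilton cycle, and the mechanism you offer is not the operative one. The ``abundance of clique vertices ($n-2\delta\ge 4\delta$)'' is irrelevant: when all $\delta$ independent vertices attach to the same $\delta$-set, the clique is just as abundant and no threading exists, so any correct argument must exploit the frontier set $F=\bigcup_j N(s_j)$ rather than the size of the clique. Nor does a Hall-type condition apply directly: each $s_j$ needs a \emph{pair} of clique vertices consecutive in the cyclic order, and these pairs must jointly form a linear forest inside $F$, a constraint invisible to Hall's theorem. The clean repair uses the closure machinery the paper already provides (Theorem~\ref{thm33} with $k=0$). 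If the neighbourhoods are not all equal, then $|F|\ge\delta+1$, since $|F|=\delta$ together with $N(s_j)\subseteq F$ and $|N(s_j)|=\delta$ would force every $N(s_j)$ to equal $F$. Now every $v\in F$ has $d(v)\ge n-\delta$, so $d(s_j)+d(v)\ge n$ and the $n$-closure acquires all edges between $S$ and $F$; then each $s_j$ has closure degree at least $|F|\ge\delta+1$, so $d(s_j)+d(w)\ge n$ for every clique vertex $w$ and all edges between $S$ and the clique are added; then $d(s_j)\ge n-\delta$, so the pairs inside $S$ become eligible, $\mathrm{cl}_n(G)=K_n$, and $G$ is Hamiltonian by Theorem~\ref{thm33}. (Alternatively, for $n\ge 6\delta+5$ you could invoke the paper's Theorem~\ref{thmFKL} with $k=0$: since $e(G)\ge e(H_{n,\delta})>e(H_{n,\delta+1})$ and $e(L_{n,\delta})<e(H_{n,\delta})$ for $\delta\ge 2$, with $L_{n,1}=H_{n,1}$, a non-Hamiltonian $G$ must equal $H_{n,\delta}$; but this route misses the range $6\delta\le n<6\delta+5$ covered by the corollary.)
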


\subsection{Spectral conditions for Hamiltonicity}

In 2010, Fiedler and Nikiforov 
proved a spectral version of  Theorem \ref{coroore}. 

\begin{theorem}[Fiedler--Nikiforov \cite{FiedlerNikif}]
If $G$ is a graph on $n\ge 3$ vertices and 
\begin{eqnarray*}  
\lambda (G)> n-2,
\end{eqnarray*}
then either $G$ has a Hamilton cycle 
or $G=K_1\vee (K_1\cup K_{n-2})$. 
\end{theorem}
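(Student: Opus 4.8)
The plan is to translate the spectral hypothesis $\lambda(G) > n-2$ into the edge-count hypothesis of the Ore--Bondy theorem (Theorem \ref{coroore}) and then read off the conclusion. First I would record that $\lambda(G) > n-2$ already forces $G$ to be connected. Indeed, if $G$ were disconnected, its largest component would have at most $n-1$ vertices, and any graph on at most $n-1$ vertices has spectral radius at most $n-2$ (with equality only for $K_{n-1}$); hence $\lambda(G) \le n-2$, a contradiction. Securing connectivity is the point that licenses the use of the sharper spectral--edge estimate in the next step.

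Second, I would invoke Hong's inequality, which asserts that a connected graph with $n$ vertices and $m$ edges satisfies $\lambda(G) \le \sqrt{2m - n + 1}$. Together with $\lambda(G) > n-2$ this yields $(n-2)^2 < 2m - n + 1$, i.e. $2m > n^2 - 3n + 3$. Since $2m$ is an integer, this improves to $2m \ge n^2 - 3n + 4$, which is exactly $m \ge \binom{n-1}{2} + 1$. Thus $G$ meets the hypothesis of Theorem \ref{coroore}.

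Third, Theorem \ref{coroore} now gives three alternatives: $G$ has a Hamilton cycle, or $G = K_1 \vee (K_1 \cup K_{n-2})$, or $n=5$ and $G = K_2 \vee I_3$. The first two are precisely the outcomes claimed, so it remains only to exclude the sporadic case. For this I would compute $\lambda(K_2 \vee I_3)$ directly from the equitable partition into the two vertices of $K_2$ and the three vertices of $I_3$: the quotient matrix $\left(\begin{smallmatrix} 1 & 3 \\ 2 & 0 \end{smallmatrix}\right)$ has largest root $3 = n-2$, so $K_2 \vee I_3$ fails the strict inequality $\lambda(G) > n-2$ and cannot arise. This finishes the argument.

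The delicate point, and the one I would watch most carefully, is that the numerical margin is paper-thin. The weaker, always-valid bound $\lambda^2 \le 2m$ only gives $2m > (n-2)^2 = n^2 - 4n + 4$, which falls short of the Ore--Bondy threshold by about $n$; it is exactly the extra term $-n+1$ in Hong's bound, combined with the integrality of $2m$, that closes the gap and lands precisely on $m \ge \binom{n-1}{2}+1$. The secondary subtlety is that the extremal graph $K_1 \vee (K_1 \cup K_{n-2})$ is genuinely non-Hamiltonian yet does satisfy $\lambda(G) > n-2$ (it is $K_{n-1}$ with a pendant vertex), so it must be retained in the conclusion; only the boundary graph $K_2 \vee I_3$, sitting exactly at $\lambda = n-2$, is eliminated by strictness.
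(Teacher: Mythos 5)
Your proof is correct. Note that the paper itself does not prove this statement---it is quoted from Fiedler and Nikiforov's original article---so there is no internal proof to compare against; but your route is exactly the standard one and is the same reduction scheme this paper uses for its own main results (Theorems \ref{thm21} and \ref{thm25}): convert the spectral hypothesis into an edge-count hypothesis via a Hong-type bound (your $\lambda(G)\le\sqrt{2m-n+1}$ is precisely the $\delta=1$ case of Theorem \ref{thm31}), feed that into a purely combinatorial Hamiltonicity theorem (here Theorem \ref{coroore}, in the paper's proofs Theorems \ref{thmFKL} and \ref{thm34}), and then eliminate the boundary exceptional graph by a direct spectral computation (your quotient-matrix calculation showing $\lambda(K_2\vee I_3)=3=n-2$, playing the role that Theorem \ref{thm35} and Lemma \ref{lem32} play in the paper). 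All three steps of your argument check out: the connectivity claim ($\lambda$ of a disconnected $n$-vertex graph is at most $n-2$, since each component has at most $n-1$ vertices), the integrality sharpening $2m\ge n^2-3n+4=2\binom{n-1}{2}+2$, and the exclusion of $K_2\vee I_3$ by strictness of the inequality.
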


This result motivated a large number of researches  
on the topic of finding a spectral condition to 
guarantee the existence of a Hamilton cycle 
and path; see, e.g., \cite{YF13, Ningbo15,Ningbo20,Liuruifang,FengLAA17,LiuDMGT,LiBinlong,LNLAA17,Niki16}.
In 2013, Yu and Fan \cite{YF13} 
gave the corresponding version 
for the signless Laplacian spectral radius. 
Recall that $q(G)$ stands for the 
  signless Laplacian spectral radius, i.e., 
 the largest eigenvalue of 
 the {\it signless Laplacian matrix} $Q(G)=D(G) + 
 A(G)$, where $D(G)=\mathrm{diag}(d_1,\ldots ,d_n )$ 
 is the degree diagonal matrix and 
 $A(G)$ is the adjacency matrix. 

\begin{theorem}[Yu--Fan \cite{YF13}]
If $G$ is a graph on $n\ge 3$ vertices and 
\begin{eqnarray*}  
q(G)>2(n-2),
\end{eqnarray*}
then  $G$ has a Hamilton cycle  
or $G=K_1\vee (K_1\cup K_{n-2})$, 
or $n=5$ and $G=K_2 \vee I_3$. 
\end{theorem}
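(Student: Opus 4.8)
The plan is to reduce the signless Laplacian statement to the edge-count statement of Theorem~\ref{coroore} by comparing $q(G)$ against a well-chosen quantity involving the size $m=e(G)$. The key tool is the standard inequality relating the signless Laplacian spectral radius to the sum of squares of degrees and the number of edges. First I would recall the bound
\[
q(G)\le \frac{2m}{n-1}+n-2,
\]
which holds for connected graphs on $n$ vertices (this is a classical estimate of the largest signless Laplacian eigenvalue). The strategy is to show that the hypothesis $q(G)>2(n-2)$ forces $m$ to be large enough to invoke Theorem~\ref{coroore}, thereby placing $G$ either in the Hamiltonian case or in one of the two listed exceptional families.

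The main steps I would carry out are as follows. First, I would dispose of the disconnected case: if $G$ is disconnected, then $Q(G)$ is block-diagonal and $q(G)=\max_i q(G_i)$ over the components $G_i$, so $q(G)\le 2(n_i-1)\le 2(n-2)$ unless one component is spanning, contradicting the hypothesis; hence $G$ is connected. Second, assuming $G$ connected, I would feed $q(G)>2(n-2)$ into the bound above to deduce $\frac{2m}{n-1}+n-2>2(n-2)$, which rearranges to $m>\frac{(n-1)(n-2)}{2}=\binom{n-1}{2}$, i.e. $m\ge \binom{n-1}{2}+1$. Third, this is exactly the hypothesis of Theorem~\ref{coroore}, so $G$ is Hamiltonian or $G\in\{K_1\vee(K_1\cup K_{n-2}),\,K_2\vee I_3\ (n=5)\}$.

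The delicate point is the boundary analysis, because the inequality $q(G)\le \frac{2m}{n-1}+n-2$ is attained with equality only for specific graphs (regular or bidegreed graphs such as stars and complete graphs), so near-equality cases require care to ensure the strict inequality $q(G)>2(n-2)$ genuinely yields $m\ge\binom{n-1}{2}+1$ rather than merely $m\ge\binom{n-1}{2}$. I would therefore treat the threshold case $m=\binom{n-1}{2}$ separately, checking directly that the extremal graphs on $\binom{n-1}{2}$ edges all satisfy $q(G)\le 2(n-2)$, so that they are excluded by the strict hypothesis. The hard part will be verifying that the exceptional non-Hamiltonian graphs $K_1\vee(K_1\cup K_{n-2})$ and $K_2\vee I_3$ survive as genuine exceptions---that is, computing their signless Laplacian spectral radii to confirm each really does attain $q(G)>2(n-2)$, so that they must appear in the conclusion and cannot be ruled out by the spectral bound alone. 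These two computations, together with the equality analysis of the eigenvalue bound, are where the argument is least routine.
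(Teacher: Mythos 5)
Your proof is correct, and it is essentially the same route the paper itself relies on: the paper quotes this statement from \cite{YF13} without proof, but your reduction --- the Feng--Yu bound $q(G)\le \frac{2e(G)}{n-1}+n-2$ (Theorem \ref{thmFY}) forcing the edge count into the range of the Ore--Bondy theorem (Theorem \ref{coroore}) --- is precisely the scheme used in Section \ref{sec4} to prove Theorems \ref{thm23} and \ref{thm27}. Two of the points you flag as delicate are non-issues. First, the threshold case $e(G)=\binom{n-1}{2}$ needs no separate treatment and no equality characterization of the Feng--Yu bound is ever invoked: from the strict hypothesis $2(n-2)<q(G)\le\frac{2e(G)}{n-1}+n-2$ you get $e(G)>\binom{n-1}{2}$ strictly, and integrality alone yields $e(G)\ge\binom{n-1}{2}+1$; equivalently, any graph with $e(G)\le\binom{n-1}{2}$ is excluded automatically by the same bound. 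Second, computing $q$ for $K_1\vee(K_1\cup K_{n-2})$ and $K_2\vee I_3$ is not needed for the implication as stated; it only certifies sharpness, i.e., that the exceptions cannot be dropped from the conclusion. (For the record, both checks are easy: $K_1\vee(K_1\cup K_{n-2})$ is connected and properly contains $K_{n-1}$, so strict monotonicity gives $q>2(n-2)$; for $K_2\vee I_3$ an equitable-partition computation gives $q=\frac{7+\sqrt{33}}{2}>6$.) The one genuinely necessary piece of care is the disconnected case, which you handle correctly via $q(G)=\max_i q(G_i)\le 2(n_i-1)\le 2(n-2)$, prudent since the Feng--Yu bound is classically stated for connected graphs.
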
 

In \cite{YF13}, 
the counterexample of 
$n= 5,G=K_2\vee I_3$ is missed.  
This tiny flaw has already been pointed 
out  by Liu, Shiu and Xue \cite{Liuruifang} and 
by Li and Ning \cite{LiBinlong} as well.

By introducing the minimum degree of a graph as a new parameter, Li and Ning \cite{LiBinlong} extended
Fiedler and Nikiforov's results \cite{FiedlerNikif} 
 in some sense and obtained a spectral analogue 
 of Theorem \ref{thmerdos62}  of Erd\H{o}s 
 on the existence of Hamilton cycles. 

\begin{theorem}[Li--Ning \cite{LiBinlong}] \label{thmln16a}
Suppose $\delta \ge 1$  and $n\ge \max\{6 \delta+5,(\delta^2+6\delta+4)/2\}$. 
If $G$ is an $n$-vertex graph with $\delta (G)\ge \delta$ and  
\begin{eqnarray*}  
\lambda (G)\ge \lambda (H_{n,\delta}),
\end{eqnarray*}
then either $G$ has a Hamilton cycle or $G=H_{n,\delta}$. 
\end{theorem}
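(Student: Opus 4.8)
The plan is to argue by contradiction, reducing the spectral hypothesis to an edge-count hypothesis and then invoking Erd\H{o}s' extremal bound (Corollary \ref{coro34}). Suppose $G$ is not Hamiltonian, $\delta(G)\ge \delta$, $\lambda(G)\ge \lambda(H_{n,\delta})$, and yet $G\ne H_{n,\delta}$; I shall derive $\lambda(G)<\lambda(H_{n,\delta})$, a contradiction. First I would record that $G$ is connected: since $K_{n-\delta}\subseteq H_{n,\delta}$ we have $\lambda(G)\ge \lambda(H_{n,\delta})\ge \lambda(K_{n-\delta})=n-\delta-1$, so one component carries at least $n-\delta$ vertices; any other component would then have at most $\delta$ vertices and hence maximum degree at most $\delta-1$, violating $\delta(G)\ge\delta$. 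Thus $G$ is connected, which is needed for the spectral bound below. Because $G$ is non-Hamiltonian with $\delta(G)\ge \delta$ and $n\ge 6\delta+5>6\delta$, Corollary \ref{coro34} applies, and as $G\ne H_{n,\delta}$ it gives the strict edge bound $e(G)\le e(H_{n,\delta})-1=\binom{n-\delta}{2}+\delta^2-1$.

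Next I would convert this edge count back into a spectral estimate through a Hong-type inequality: for a connected graph $G$ with $n$ vertices, $m$ edges and minimum degree $d=\delta(G)$,
\[ \lambda(G)\le \frac{d-1}{2}+\sqrt{2m-dn+\frac{(d+1)^2}{4}}=:F(d,m). \]
I would then verify the two monotonicity facts that let me replace $d$ by the weaker parameter $\delta$ and $m$ by its upper bound. A short computation shows $(n-(d+1)/2)^2-\bigl(2m-dn+(d+1)^2/4\bigr)=n^2-n-2m$, so $F(\cdot,m)$ is decreasing in $d$ precisely when $2m<n(n-1)$, which holds since $G$ is not complete; hence $F(\delta(G),m)\le F(\delta,m)$. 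As $F(\delta,\cdot)$ is plainly increasing in $m$, combining with the edge bound yields
\[ \lambda(G)\le F\!\left(\delta,\ \tbinom{n-\delta}{2}+\delta^2-1\right). \]

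It remains to prove the purely numerical inequality $F\bigl(\delta,\binom{n-\delta}{2}+\delta^2-1\bigr)<\lambda(H_{n,\delta})$, and this is where I expect the real work to lie. The subtlety is that $F$ is only an upper estimate, so I cannot re-enter through $F$ at $H_{n,\delta}$; instead I need an independent and sufficiently sharp \emph{lower} bound for $\lambda(H_{n,\delta})$. For this I would use the equitable partition of $H_{n,\delta}$ into the $\delta$ dominating vertices, the $K_{n-2\delta}$, and the independent set $I_\delta$, whose quotient matrix is
\[ \begin{pmatrix} \delta-1 & n-2\delta & \delta \\ \delta & n-2\delta-1 & 0 \\ \delta & 0 & 0 \end{pmatrix}, \]
so that $\lambda(H_{n,\delta})$ is the largest root of its characteristic cubic; evaluating this cubic (or testing a suitable vector in the Rayleigh quotient) yields a lower bound of the form $n-\delta-1$ plus explicit positive lower-order terms. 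Since expanding the square root shows $F\approx n-\delta-1$ to leading order as well, the decisive comparison takes place entirely among the lower-order terms, and it is exactly here that the hypothesis $n\ge (\delta^2+6\delta+4)/2$ enters, while the companion bound $n\ge 6\delta+5$ is what feeds Corollary \ref{coro34}. The main obstacle is therefore the delicate, quantitative matching of these lower-order terms, ensuring that the loss of a single edge in the Hong bound suffices to drop strictly below the true spectral radius of $H_{n,\delta}$; once that inequality is secured, the contradiction $\lambda(G)<\lambda(H_{n,\delta})$ is immediate and the theorem follows.
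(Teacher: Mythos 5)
Your plan breaks down at exactly the step you flagged as ``the real work'': the numerical inequality $F\bigl(\delta,\binom{n-\delta}{2}+\delta^2-1\bigr)<\lambda(H_{n,\delta})$ is \emph{false} for every $\delta\ge 2$ throughout the theorem's range of $n$, so the desired contradiction can never be reached. Indeed, writing $x=n-\tfrac{3\delta+1}{2}$, at $m=\binom{n-\delta}{2}+\delta^2-1$ one computes $2m-\delta n+\tfrac{(\delta+1)^2}{4}=x^2+\delta^2-2$, hence $F=\tfrac{\delta-1}{2}+\sqrt{x^2+\delta^2-2}\ge n-\delta-1+\tfrac{\delta^2-2}{2x}-\tfrac{(\delta^2-2)^2}{8x^3}$, an excess over $n-\delta-1$ of order $\tfrac{\delta^2-2}{2n}$. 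On the other hand, the quotient-matrix cubic you propose gives $\lambda(H_{n,\delta})=n-\delta-1+\epsilon$ where $\epsilon(n-\delta-1+\epsilon)(n-\delta+\epsilon)=\delta^2(\delta+\epsilon)$, so $\epsilon\le\tfrac{\delta^2(\delta+1)}{(n-\delta)(n-\delta-1)}=O(\delta^3/n^2)$. For $\delta\ge2$ the term $\tfrac{\delta^2-2}{2n}$ dominates $\tfrac{\delta^3}{n^2}$ once $n$ exceeds roughly $2\delta^3/(\delta^2-2)\approx 2\delta$, which is far below the hypothesis $n\ge 6\delta+5$. A concrete check: for $\delta=2$, $n=17$ we have $e(H_{17,2})-1=108$, and $F=\tfrac12+\sqrt{184.25}\approx 14.07$, while $\lambda(H_{17,2})\approx 14.04$. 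So losing a single edge relative to Erd\H{o}s' bound does not pull the Hong estimate below $\lambda(H_{n,\delta})$; no amount of careful matching of lower-order terms can save this, because the two quantities genuinely sit in the wrong order.

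The root cause is that Hong's inequality has inherent slack of about $\tfrac{\delta^2}{2n}$ (worth about $\delta^2/2$ edges) at the very irregular graph $H_{n,\delta}$, whereas Corollary \ref{coro34} hands you only one edge of slack. This is precisely why the actual proof (Li--Ning's, and the route this paper follows for Theorems \ref{thm21} and \ref{thm25}) does not use Corollary \ref{coro34} but the \emph{stability} theorem with the lower threshold $e(H_{n,\delta+1})$: Hong's bound applied to $\lambda(G)\ge\lambda(H_{n,\delta})>n-\delta-1$ yields $e(G)\ge\binom{n-\delta}{2}+\delta^2/2$ up to lower-order terms, which does exceed $e(H_{n,\delta+1})=\binom{n-\delta-1}{2}+(\delta+1)^2$ exactly when $n\ge(\delta^2+6\delta+4)/2$ (this is where that hypothesis truly enters), but can never certify $e(G)\ge e(H_{n,\delta})$. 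The stability theorem (Theorem \ref{thmFKL} or \ref{thm34} with $k=0$, i.e.\ Li--Ning's Lemma 2) then returns the alternatives: $G$ Hamiltonian, or $G\subseteq H_{n,\delta}$, or $G\subseteq L_{n,\delta}$; and these structured cases are finished not by edge counting but by spectral monotonicity --- a proper subgraph of the connected graph $H_{n,\delta}$ has strictly smaller spectral radius, and $\lambda(L_{n,\delta})<\lambda(H_{n,\delta})$ by the Kelmans-operation argument --- contradicting $\lambda(G)\ge\lambda(H_{n,\delta})$. Your write-up becomes a correct proof if you replace Corollary \ref{coro34} by the stability theorem and append this structural endgame; as proposed, the approach fails for all $\delta\ge 2$.
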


\begin{theorem}[Li--Ning \cite{LiBinlong}] \label{thmln16b}
Suppose $\delta \ge 1$ and $n\ge \max\{6\delta+5,(3\delta^2+5\delta+4)/2\}$. 
If $G$ is an $n$-vertex graph with $\delta (G)\ge \delta$ and 
\begin{eqnarray*}  
q (G)\ge q (H_{n,\delta}),
\end{eqnarray*}
then either 
$G$ has a Hamilton cycle 
or $G=H_{n,\delta}$. 
\end{theorem}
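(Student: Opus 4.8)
The plan is to reduce the signless Laplacian (spectral) statement of Theorem~\ref{thmln16b} to the edge-count corollary of Erd\H{o}s (Corollary~\ref{coro34}) by using $q(G)$ to control the size $e(G)$. The overall strategy mirrors the adjacency case (Theorem~\ref{thmln16a}), but since the signless Laplacian spectral radius sees degrees directly, the translation into an edge bound is a little more delicate. First I would record the extremal value: for $H_{n,\delta}=K_\delta\vee(K_{n-2\delta}\cup I_\delta)$, I would compute a clean lower estimate for $q(H_{n,\delta})$, for instance via the quotient matrix of the natural equitable partition $\{K_\delta,\ K_{n-2\delta},\ I_\delta\}$, so that the hypothesis $q(G)\ge q(H_{n,\delta})$ yields an explicit numerical threshold $q(G)\ge f(n,\delta)$ with $f(n,\delta)$ growing like $2n$.

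Next I would convert the spectral hypothesis into an edge (and degree) bound. The key inequality is the standard bound relating the largest signless Laplacian eigenvalue to the average $2$-degree or to the size: one has $q(G)\le \frac{2e(G)}{n-1}+n-2$, and more usefully the Rayleigh-type/Hofmeister-type estimate $q(G)^2\le$ a symmetric function of the degrees. Applying such an inequality to the hypothesis $q(G)\ge q(H_{n,\delta})$ should force $e(G)\ge e(H_{n,\delta})={n-\delta\choose 2}+\delta^2$, possibly after absorbing lower-order terms using the range $n\ge\max\{6\delta+5,(3\delta^2+5\delta+4)/2\}$. The minimum-degree hypothesis $\delta(G)\ge\delta$ is carried along unchanged. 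Once $e(G)\ge e(H_{n,\delta})$ and $\delta(G)\ge\delta$ are in hand, Corollary~\ref{coro34} applies directly and gives that either $G$ is Hamiltonian or $G=H_{n,\delta}$, which is exactly the desired dichotomy.

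The remaining work is to close the gap in the boundary case $e(G)=e(H_{n,\delta})$, where Corollary~\ref{coro34} already leaves open only $G=H_{n,\delta}$; but one must also rule out that the spectral inequality $q(G)\ge q(H_{n,\delta})$ could be met by some graph with $e(G)>e(H_{n,\delta})$ that is nonetheless non-Hamiltonian. Here I would argue that any such graph either violates the size bound of Corollary~\ref{coro34} (hence is Hamiltonian) or coincides with an extremal configuration, and then check equality in the eigenvalue estimate forces the structure of $H_{n,\delta}$ by the equality conditions of the bound used (the Perron eigenvector being constant on parts). This is where I expect the main obstacle to lie: the inequality $q(G)\le \frac{2e(G)}{n-1}+n-2$ is tight only for regular or bidegreed graphs, so the equality analysis does not pin down $H_{n,\delta}$ immediately, and one must combine it with the degree constraints and a careful comparison of $q$-values among the finitely many candidate non-Hamiltonian graphs of maximum size.

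Finally, I would verify numerically that the two lower bounds on $n$ in the hypothesis are exactly what the size-to-spectrum conversion requires: the term $(3\delta^2+5\delta+4)/2$ should emerge precisely as the threshold making the estimate for $q(H_{n,\delta})$ dominate the competing term ${n-\lfloor(n-1)/2\rfloor\choose 2}+\lfloor(n-1)/2\rfloor^2$ in Erd\H{o}s's bound~\eqref{eq1}, while $6\delta+5$ ensures $e(H_{n,\delta})$ is the maximizer in~\eqref{eq1} so that Corollary~\ref{coro34} is applicable. Confirming that these arithmetic thresholds align is routine but essential, and I would present it as a short lemma bounding $q(H_{n,\delta})$ from below together with the elementary degree–size inequality for $q(G)$.
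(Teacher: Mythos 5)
Your reduction hinges on the claim that $q(G)\ge q(H_{n,\delta})$ can be converted, via Theorem~\ref{thmFY} (or any similar eigenvalue--size inequality), into $e(G)\ge e(H_{n,\delta})$, so that Corollary~\ref{coro34} applies. This step fails, and not because of lower-order terms. Since Theorem~\ref{thmFY} is itself valid for $H_{n,\delta}$, the best edge bound this route can ever produce is
\[
e(G)\;\ge\;\frac{(n-1)\bigl(q(H_{n,\delta})-n+2\bigr)}{2}\;\le\; e(H_{n,\delta}),
\]
and the right-hand inequality is strict and quantitatively bad: one computes $q(H_{n,\delta})=2(n-\delta-1)+\Theta(\delta^2/n)$, so the derived lower bound is approximately
$\tfrac{(n-1)(n-2\delta)}{2}+\tfrac{\delta^2}{2}
= e(H_{n,\delta})-\tfrac{\delta(2\delta-1)}{2}+o(1)$.
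The deficit is of order $\delta^2$ and is \emph{independent of $n$}, so it cannot be absorbed by taking $n$ large; for every $\delta\ge 2$ the spectral hypothesis simply does not yield the hypothesis of Corollary~\ref{coro34}. This is fatal, because the window $\bigl(e(H_{n,\delta+1}),\,e(H_{n,\delta})\bigr)$ genuinely contains non-Hamiltonian graphs of minimum degree $\delta$ --- for instance $L_{n,\delta}=K_1\vee(K_\delta\cup K_{n-\delta-1})$ and proper subgraphs of $H_{n,\delta}$ --- about which Corollary~\ref{coro34} says nothing, and your plan has no mechanism to exclude them. (Your guess about the constant $(3\delta^2+5\delta+4)/2$ is also misplaced: it arises precisely from requiring $\tfrac{(n-1)(n-2\delta)}{2}\ge e(H_{n,\delta+1})$, i.e.\ $2n-3\delta^2-5\delta-4\ge 0$, not from the second term in \eqref{eq1}.)

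The workable route --- the one Li and Ning follow, and the one this paper follows for its generalizations (see the proofs of Theorems~\ref{thm23} and \ref{thm27}) --- aims the edge count only at the weaker threshold $e(H_{n,\delta+1})$, which the Feng--Yu conversion does reach exactly when $n\ge(3\delta^2+5\delta+4)/2$, and then invokes a stability theorem (Li--Ning's Lemma~2, i.e.\ the $k=0$ case of Theorem~\ref{thmFKL}) whose conclusion allows the exceptional outcomes $G\subseteq H_{n,\delta}$ or $G\subseteq L_{n,\delta}$ for $n\ge 6\delta+5$. One then finishes with genuinely spectral arguments that your proposal lacks: a proper subgraph of the connected graph $H_{n,\delta}$ has strictly smaller signless Laplacian spectral radius, and $q(L_{n,\delta})<q(H_{n,\delta})$ because Kelmans operations turn $L_{n,\delta}$ into a proper subgraph of $H_{n,\delta}$ without decreasing $q$ (cf.\ the discussion after Theorem~\ref{thmniki} and \cite[Theorem 2.12]{LiBinlong}). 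These two ingredients --- the stability lemma with its exceptional families, and the spectral comparison eliminating subgraphs of $H_{n,\delta}$ and $L_{n,\delta}$ --- are what actually close the argument, and no refinement of the equality analysis in the Feng--Yu bound can substitute for them.
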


Although these results of Li and Ning seem to be algebraic, 
their proof of  Theorem \ref{thmln16a} and Theorem \ref{thmln16b} 
 also need detailed 
graph structural analysis. 
The key ingredients of the proof of these theorems  are mainly based on a  stability result  \cite[Lemma 2]{LiBinlong}. 
We denote  
\begin{equation}   \label{defl}
 L_{n,\delta}:=K_1 \vee (K_\delta \cup K_{n-\delta-1}).
 \end{equation} 
Clearly $L_{n,\delta}$ contains no Hamilton cycle and 
$e(L_{n,\delta}) ={n-\delta \choose 2} + {\delta +1 \choose 2} 
< e(H_{n,\delta})$.

\medskip 
Soon after, 
Nikiforov \cite[Theorem 1.4]{Niki16} proved the following theorem.

\begin{theorem}[Nikiforov \cite{Niki16}] \label{thmniki}
Suppose that $\delta\ge 1$ and $n\ge \delta^3+\delta+4$. 
If $G$ is an $n$-vertex graph with  minimum degree 
$\delta (G)\ge \delta$ and  
\begin{eqnarray*}
\lambda (G) \ge n- \delta -1,
\end{eqnarray*}
then  $G$ has a Hamilton cycle, or $G= H_{n,\delta}$,   
or $G= L_{n,\delta}$.  
\end{theorem}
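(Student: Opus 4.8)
The plan is to argue by contradiction: I assume $G$ is non-Hamiltonian with $\delta(G)\ge \delta$ and $\lambda(G)\ge n-\delta-1$, and aim to show that necessarily $G=H_{n,\delta}$ or $G=L_{n,\delta}$. First I would record that $G$ must be connected: if $G$ had components $C_1,\dots,C_t$ with $t\ge 2$, then each $C_i$ has minimum degree at least $\delta$ and hence at least $\delta+1$ vertices, so the largest component spans at most $n-\delta-1$ vertices and $\lambda(G)=\max_i\lambda(C_i)\le n-\delta-2<n-\delta-1$, a contradiction.

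Next I would convert the spectral hypothesis into an edge bound. Using the minimum-degree refinement of Hong's inequality, namely $\lambda(G)\le \tfrac{\delta-1}{2}+\sqrt{2m-\delta n+\tfrac{(\delta+1)^2}{4}}$, together with the monotonicity of $t\mapsto t^2-(\delta-1)t$ for $t\ge n-\delta-1$, the assumption $\lambda(G)\ge n-\delta-1$ yields $2e(G)\ge (n-\delta)(n-\delta-1)+\delta^2$, i.e. $e(G)\ge \binom{n-\delta}{2}+\tfrac{\delta^2}{2}$. A direct computation shows that, under $n\ge \delta^3+\delta+4$, this quantity strictly exceeds the edge-stability threshold $e(H_{n,\delta+1})=\binom{n-\delta-1}{2}+(\delta+1)^2$. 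I would then invoke the stability theorem for non-Hamiltonian graphs with prescribed minimum degree (Li--Ning, F\"uredi--Kostochka--Luo): a non-Hamiltonian $n$-vertex graph with $\delta(G)\ge\delta$ and this many edges must be a spanning subgraph of $H_{n,\delta}$ or of $L_{n,\delta}$. Any balanced-clique extremal graph produced by the edge-stability version is excluded at once, since its two cliques of size about $n/2$ force $\lambda\approx n/2<n-\delta-1$.

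It remains to upgrade ``spanning subgraph'' to equality, and this is the heart of the matter. Both $H_{n,\delta}$ and $L_{n,\delta}$ contain the dominating clique $K_{n-\delta}$, which is exactly why their spectral radii reach $n-\delta-1$; a Rayleigh-quotient estimate with a near-uniform test vector shows that the excess $\lambda(H_{n,\delta})-(n-\delta-1)$, and likewise for $L_{n,\delta}$, is only of order $\delta^3/n^2$. On the other hand, the minimum-degree constraint forbids deleting the low-degree edges (the $I_\delta$-to-$K_\delta$ edges of $H_{n,\delta}$, respectively the edges inside $K_\delta$ and from $K_\delta$ to the apex of $L_{n,\delta}$), so any proper spanning subgraph respecting $\delta(G)\ge\delta$ must omit at least one edge of $K_{n-\delta}$; since the Perron vector of $K_{n-\delta}$ is essentially uniform, deleting such an edge lowers the spectral radius by $\Omega(1/n)$. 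The main obstacle is to make this comparison quantitative and to verify that $\Omega(1/n)$ beats $O(\delta^3/n^2)$, which is precisely where the hypothesis $n\ge \delta^3+\delta+4$ enters: it guarantees $\lambda(G)<n-\delta-1$ for every admissible proper subgraph, forcing $G=H_{n,\delta}$ or $G=L_{n,\delta}$ and completing the proof. The delicate point throughout is this final step, because the edge-count bound itself falls just short of $e(L_{n,\delta})$, and only the sharper eigenvector analysis can separate the two extremal graphs from their near-neighbours.
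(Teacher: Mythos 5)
Your proposal is correct and follows essentially the same route the paper itself sketches for this theorem (and carries out for its generalizations, Theorems \ref{thm21} and \ref{thm25}): convert $\lambda(G)\ge n-\delta-1$ into the edge bound $e(G)\ge\binom{n-\delta}{2}+\tfrac{\delta^2}{2}>e(H_{n,\delta+1})$ via the Hong--Shu--Fang/Nikiforov inequality (Theorem \ref{thm31}), invoke the Li--Ning/F\"{u}redi--Kostochka--Luo stability theorem (Theorem \ref{thmFKL} with $k=0$) to conclude $G\subseteq H_{n,\delta}$ or $G\subseteq L_{n,\delta}$, and then force equality by showing every proper subgraph with $\delta(G)\ge\delta$ satisfies $\lambda(G)<n-\delta-1$, which is exactly the content of Theorem \ref{thm35} (Nikiforov's Theorem 1.6). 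Your quantitative accounting in the last step --- the outgrowth edges lift the spectral radius above $n-\delta-1$ by only $O(\delta^3/n^2)$, while deleting any edge of the clique $K_{n-\delta}$ costs $\Omega(1/n)$, so $n\ge\delta^3+\delta+4$ separates the two --- is precisely the mechanism the paper attributes to that step.
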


We now introduce an important operation of graphs, 
which is known as the celebrated Kelmans operation; 
see, e.g., \cite{Kel1981} or \cite[p. 36]{BH2012}. 
Let $G$ be a graph and $u,v\in V(G)$ be distinct vertices. 
We define a new graph $G^*$ obtained from $G$ by replacing the edge 
$\{v,x\}$ by a new edge $\{u,x\}$ for all $x\in N(v) \setminus (N(u)\cup \{u\})$, 
and all vertices different from $u, v$ remain unchanged. 
Note that vertices $u,v$ are adjacent in $G^*$ if and only if 
they are adjacent in $G$. 
An isomorphic graph is obtained if the roles of $u$ and $v$ are interchanged. 
Furthermore, Csikv\'{a}ri \cite{Csi2009} showed that the Kelmans operation 
does not decrease the spectral radius  of a graph. 
Correspondingly, the same result also holds for the signless Laplacian spectral radius 
as well \cite[Theorem 2.12]{LiBinlong}. Additionally, an analogous variant of the Kelmans operation can be seen in \cite{WXH2005}.

Theorem \ref{thmniki} is a slight improvement on 
Theorem \ref{thmln16a}. Indeed, we observe that 
$K_{n-\delta}$ is a proper subgraph of $H_{n,\delta}$ and $L_{n,\delta}$, 
which yields 
  $\lambda (H_{n,\delta}) > \lambda (K_{n-\delta})=n-\delta-1$ and 
  $\lambda (L_{n,\delta}) > \lambda (K_{n-\delta})=n-\delta-1$. 
 Moreover, 
 by  applying the Kelmans operations on $L_{n,\delta}$, 
 we can obtain a proper subgraph of $H_{n,\delta}$.  
 Hence we can get  $\lambda (H_{n,\delta}) > \lambda (L_{n,\delta})$. 
 By calculation, we know that 
 $\lambda (H_{n,\delta})$ is very close to $n-\delta -1$ with $n$ sufficiently large.

Although Nikiforov's Theorem \ref{thmniki} 
 strengthened slightly Theorem \ref{thmln16a}.   
Unfortunately, one dissatisfaction in Theorem \ref{thmniki} 
is that 
the requirement of the order of graph is stricter than 
that in Theorem \ref{thmln16a}.  
One open problem is to relax this requirement.  
In addition, a natural question is that whether 
the value bound 
$q(G)\ge 2(n-\delta -1) $ corresponding to Theorems \ref{thmln16b}
 hold or not. The similar problems under the conditions of signless Laplacian spectral
radius of graph seems much more complicated 
since we can delete some edges from the clique $K_{n-\delta}$ and still keep the signless Laplacian spectral radius no less than $2(n-\delta -1)$.

\medskip 

In 2018, Li, Liu and Peng \cite{LLPLMA18} 
 solved this problem completely. 
They 
gave the corresponding improvement on 
the result of the signless Laplacian spectral radius 
in Theorem \ref{thmln16b}. 
Interestingly, the extremal graphs in their theorems are quite different from those in
Theorems \ref{thmln16b}.  
Recalling the definition in (\ref{defh}) and (\ref{defl}),  
we denote $X=
\{v\in V(H_{n,\delta}) : d(v)=\delta\}, 
Y= \{v\in V(H_{n,\delta}): d(v)=n-1\}$ 
and $Z=\{v\in V(H_{n,\delta}): d(v)=n- \delta -1\}$. 
Let $E_1(H_{n,\delta})$ be the set of those edges of $H_{n,\delta}$ 
whose both endpoints are from $Y\cup Z$. 
We define 
\[ \mathcal{H}_{n,\delta}^{(1)} = 
\left\{ H_{n,\delta} \setminus E': E' \subseteq E_1(H_{n,\delta})
 ~\text{with}~|E'|\le 
\lfloor {\delta^2}/{4}\rfloor \right\}. \]
Similarly, for the graph $L_{n,\delta}$, 
we denote 
$X=\{v\in V(L_{n,\delta}): d(v)=\delta \}, 
Y=\{v\in V(L_{n,\delta}): d(v)=n-1\}$ 
and $Z=\{v\in V(L_{n,\delta}): d(v)=n-\delta -1\}$. 
The notation is clear although we used the same alphabets 
to denote the sets of vertices. 
It is easy to see that $Y$ contains only one vertex. 
We use $E_1(L_{n,\delta})$ to denote 
the set of edges of 
$L_{n,\delta}$ whose both endpoints are from $Y\cup Z$. 
We define 
\[ \mathcal{L}_{n,\delta}^{(1)} = 
\left\{ L_{n,\delta} \setminus E': E' \subseteq E_1(L_{n,\delta})
 ~\text{with}~|E'|\le 
\lfloor {\delta}/{4}\rfloor \right\}. \]

\begin{theorem}[Li--Liu--Peng \cite{LLPLMA18}] \label{thmllp}
Assume that $\delta \ge 1$ and 
$n\ge \delta^4+\delta^3+4\delta^2 +\delta+6$. 
Let $G$ be a connected graph with $n$ vertices and minimum 
degree $\delta (G)\ge \delta$. If 
\[ q(G)\ge 2(n-\delta -1),  \]
then $G$ has a Hamilton cycle unless 
$G\in \mathcal{H}_{n,\delta}^{(1)}$ 
or $G\in \mathcal{L}_{n,\delta}^{(1)}$. 
\end{theorem}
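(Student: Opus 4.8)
The statement has two halves, and only the forward implication carries real content. For the ``unless'' half one checks directly that every member of $\mathcal{H}_{n,\delta}^{(1)}\cup\mathcal{L}_{n,\delta}^{(1)}$ is a genuine exception: non-Hamiltonicity comes from a cut argument, since in $H_{n,\delta}$ the $\delta$ join vertices $Y$ form a cutset whose deletion leaves the $\delta$ vertices of $X$ isolated plus the (possibly further split) clique on $Z$, so the number of components of $G-Y$ is at least $\delta+1>|Y|$; deleting edges inside $Y\cup Z$ cannot repair this, and the single apex of $L_{n,\delta}$ is handled the same way. The spectral inequality $q\ge 2(n-\delta-1)$ for these graphs follows from the perturbation estimate described below, used in the direction that gives a \emph{lower} bound on $q$.

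\medskip
\noindent\textbf{From the spectral bound to structure.}
Assume $G$ is connected, non-Hamiltonian, with $\delta(G)\ge\delta$ and $q(G)\ge 2(n-\delta-1)$. First I would extract a size bound: combining the hypothesis with the standard upper estimate $q(G)\le \tfrac{2e(G)}{n-1}+n-2$ (tight for complete graphs) gives $e(G)\ge \tfrac12(n-2\delta)(n-1)$, placing $e(G)$ within $O(\delta^2)$ of $e(H_{n,\delta})$. A non-Hamiltonian graph this dense with $\delta(G)\ge\delta$ is heavily constrained, so I would invoke the Li--Ning stability lemma \cite[Lemma 2]{LiBinlong} (resting on the Erd\H{o}s-type count in Corollary \ref{coro34}) to conclude that $G$ is a spanning subgraph of either $H_{n,\delta}$ or $L_{n,\delta}$. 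Writing $G=F\setminus E'$ with $F\in\{H_{n,\delta},L_{n,\delta}\}$, the minimum-degree hypothesis localises $E'$ at once: each vertex of the independent part $X$ already has degree exactly $\delta$ in $F$, so no deleted edge may meet $X$, forcing $E'\subseteq E_1(F)$, the edges inside $Y\cup Z$.

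\medskip
\noindent\textbf{Counting the deleted edges.}
It remains to show $|E'|\le\lfloor\delta^2/4\rfloor$ when $F=H_{n,\delta}$ and $|E'|\le\lfloor\delta/4\rfloor$ when $F=L_{n,\delta}$. Solving the eigen-equations of $F$ on the classes $X,Y,Z$ shows the surplus $s:=q(F)-2(n-\delta-1)$ satisfies $s\approx \delta^2/n$ for $H_{n,\delta}$ (and $s\approx\delta/n$ for $L_{n,\delta}$), with the Perron vector nearly constant on the clique $Y\cup Z$ and negligible on $X$. Let $z$ be the Perron vector of $G$; since deleting an edge $\{u,w\}$ subtracts exactly $(z_u+z_w)^2$ from $z^{\top}Q_F z$, using $z$ in the Rayleigh quotient of $Q_F$ gives
\[
q(G)\;\le\; q(F)-\frac{1}{\|z\|^2}\sum_{\{u,w\}\in E'}(z_u+z_w)^2 .
\]
If the entries of $z$ on $Y\cup Z$ are at least $(1-o(1))/\sqrt{n}$, each summand is at least $(4-o(1))/n$, so $q(G)\le 2(n-\delta-1)+s-(4-o(1))|E'|/n$, and $q(G)\ge 2(n-\delta-1)$ forces $|E'|\lesssim \delta^2/4$. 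The break-even ratio $s\,/\,(4/n)=\delta^2/4$ is exactly what produces the Tur\'an-type bound, and dictates its appearance in the family $\mathcal{H}_{n,\delta}^{(1)}$.

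\medskip
\noindent\textbf{Main obstacle.}
Two points need genuine care. First, the crude size bound falls short of the threshold at which the stability lemma applies cleanly---unlike the adjacency case, a non-Hamiltonian graph meeting the signless Laplacian bound need not have as many edges as $H_{n,\delta}$---so one must either sharpen the $q$-versus-$e$ inequality or feed the extra spectral information into the stability step. Second, and harder, the per-edge estimate needs a \emph{lower} bound of the form $z_v\ge(1-o(1))/\sqrt{n}$ for $v\in Y\cup Z$ on the true Perron vector of the perturbed graph $G$; establishing this (via the eigen-equation bootstrap, using that $G$ still contains a near-complete $K_{n-\delta}$) and then controlling the second-order terms is the analytic crux. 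This is precisely where the strong hypothesis $n\ge\delta^4+\delta^3+4\delta^2+\delta+6$ is spent: it makes the $\Theta(1/n)$ contribution of a single extra deleted edge dominate the $O(\delta^3/n^2)$ perturbation errors in $s$, so that the threshold is the \emph{integer} $\lfloor\delta^2/4\rfloor$ (respectively $\lfloor\delta/4\rfloor$) rather than merely $O(\delta^2)$.
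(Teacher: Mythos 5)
Your proposal follows essentially the same route as the paper's own treatment: the paper proves the $k$-generalizations (Theorems \ref{thm23} and \ref{thm27}, whose $k=0$ instance is the statement at hand, originally from \cite{LLPLMA18}) by exactly this scheme --- Theorem \ref{thmFY} converts $q(G)\ge 2(n-\delta-1)$ into $e(G)\ge\frac{1}{2}(n-1)(n-2\delta)$, a stability theorem (Theorems \ref{thmFKL} and \ref{thm34}; for $k=0$, Li--Ning's lemma) places $G$ inside $H_{n,\delta}$ or $L_{n,\delta}$, the minimum-degree hypothesis confines the deleted edges to $E_1$, a Perron-vector estimate (Lemma \ref{lem32}) bounds their number, and Lemma \ref{lem31} gives sharpness --- and your sketch reproduces each of these steps.

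Two corrections are worth recording. First, your ``main obstacle'' concerning the size bound is not an obstacle at all: the stability step requires only $e(G)>e(H_{n,\delta+1})=\binom{n-\delta-1}{2}+(\delta+1)^2$, not $e(G)\ge e(H_{n,\delta})$, and
\[
\frac{(n-1)(n-2\delta)}{2}-e(H_{n,\delta+1})
=n-\frac{\delta^2+\delta+2}{2}-(\delta+1)^2>0
\]
throughout the stated range of $n$; this is precisely how the paper's proofs of Theorems \ref{thm23} and \ref{thm27} proceed, with no sharpening of the $q$-versus-$e$ inequality and no extra spectral input into the stability step. Second, in the counting step you compare $q(G)$ with $q(H_{n,\delta})$, which obliges you to prove a sharp two-sided estimate on the surplus $s=q(H_{n,\delta})-2(n-\delta-1)$: the lower bound $\delta^2/(n-\delta)$ is a one-line Rayleigh computation, but the matching upper bound $s\le(1+o(1))\delta^2/n$ needs the quotient-matrix analysis you only gesture at. The paper's Lemma \ref{lem32} sidesteps this by comparing the Rayleigh quotient of the Perron vector of $G$ against $Q(K_{n-\delta}\cup I_{\delta})$, whose largest eigenvalue is exactly $2(n-\delta-1)$; then one needs only that the gain $\sum_{x\in X,\,y\in Y}(f_x+f_y)^2\le\delta^2(1+o(1))^2$ from the cross edges is beaten by the loss $4|E'|\bigl(1-o(1)\bigr)^2$ from the deleted edges. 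Both routes rest on the analytic crux you correctly isolate --- Perron entries $o(1)$ on $X$ and $1-o(1)$ on $Y\cup Z$ --- which the paper establishes in Claims 2--4 in the proof of Lemma \ref{lem32}, via the eigen-equation together with the switching Lemma \ref{lemhz} applied to a $q$-maximal member of the family.
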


The paper is organized as follows. In Section \ref{sec2}, 
we shall present our results on the problems involving the 
existence of Hamilton cycles. 
This paper is mainly motivated by the aforementioned works \cite{LiBinlong, Niki16, LLPLMA18}. 
Our theorems extend  
Theorems \ref{thmln16a}--\ref{thmllp} slightly, we shall provide  
the sufficient conditions on graphs being $k$-edge-Hamiltonian 
and $k$-Hamiltonian.  
In Section \ref{sec3}, 
we review some basic preliminaries for our  use. 
Moreover, we shall prove a stability result on 
graphs being $k$-Hamiltonian (Theorem \ref{thm34}). 
This theorem can be regarded as  the complement of 
two recent results showed by F\"{u}redi, Kostochka and Luo \cite{FKL17,FKL19}. 
In Section \ref{sec4}, we shall give the complete 
proofs of our main results stated in Section \ref{sec2}.

\section{Main results}

\label{sec2}

\subsection{Spectral conditions for $k$-edge-Hamiltonicity}   

A graph $G$ is called $k$-edge-Hamiltonian 
if any collection of at most $k$ edges consisting of vertex-disjoint paths is contained in a Hamilton cycle in $G$. 
In other words, each linear forest  with at most $k$ 
edges in $G$ can be extended to a Hamilton cycle of $G$. 
In particular, being $0$-edge-Hamiltonian is 
equivalent to being Hamiltonian. 

In this section, we shall present our theorems 
on the sufficient spectral conditions 
for graphs being $k$-edge-Hamiltonian. 
For convenience, we denote 
\begin{equation}  \label{eqhnkd} 
  H_{n,k,\delta } := K_\delta \vee ( K_{n-2\delta +k} \cup I_{\delta -k})   
 \end{equation} 
 and 
 \begin{equation}  
  L_{n,k,\delta}:=K_{k+1}\vee( K_{n-\delta-1}\cup K_{\delta-k}). 
  \end{equation}
It is easy to see that 
both $H_{n,k,\delta } $ and $L_{n,k,\delta}$ 
 have minimum degree 
$\delta (H_{n,k,\delta })=\delta (L_{n,k,\delta})=\delta$. 
Moreover, $H_{n,k,\delta}$ is not $k$-edge-Hamiltonian since 
no linear forest with $k$ edges within the dominating 
clique $K_{\delta}$ can be contained in a Hamilton cycle. 
Similarly,  $L_{n,k,\delta}$ is not $k$-edge-Hamiltonian as 
no path with $k$ edges within the dominating 
clique $K_{k+1}$ can be extended to a Hamilton cycle. 
 In particular, by setting $k=0$, we can see that 
 $H_{n,0,\delta}$ is  the same as $H_{n,\delta}$, 
 and $L_{n,0,\delta}$ is the same as $L_{n,\delta}$, which are 
 defined in equations (\ref{defh}) and (\ref{defl}).

\begin{gather*} 
\includegraphics[scale=0.16]{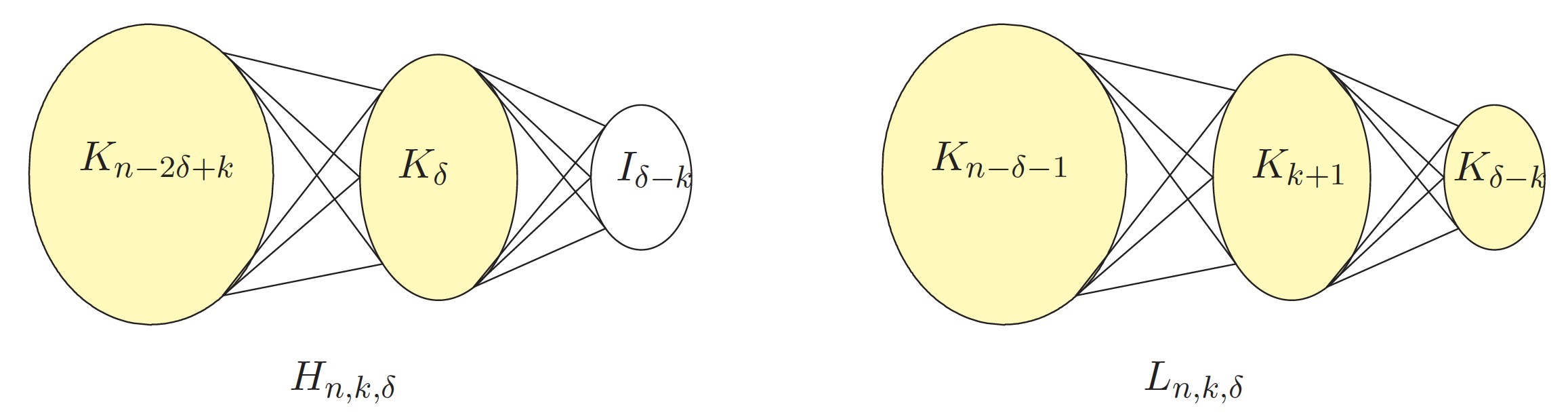}  
\end{gather*}

Now, we are ready to present our results in this paper. 
To avoid unnecessary calculations, 
we do not attempt to get the best bound 
on the order of graphs in the proof.

 \begin{theorem}   \label{thm21}
Let $k \geq 0$, $\delta\geq k+2$ and $n$ be sufficiently large.  
If $G$ is an $n$-vertex graph  with minimum degree $\delta(G)\geq \delta$ and 
  $$\lambda(G)\geq n-\delta+k-1,$$
then $G$ is $k$-edge-Hamiltonian unless $G=H_{n,k,\delta }$ 
or $G=L_{n,k,\delta}$.
\end{theorem}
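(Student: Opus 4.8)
The plan is to prove the contrapositive form: assuming $\delta(G)\ge \delta$, $\lambda(G)\ge n-\delta+k-1$, and that $G$ is \emph{not} $k$-edge-Hamiltonian, I would deduce $G=H_{n,k,\delta}$ or $G=L_{n,k,\delta}$. Since the case $k=0$ is exactly Nikiforov's Theorem \ref{thmniki}, the strategy is to adapt that proof, replacing ordinary Hamiltonicity by $k$-edge-Hamiltonicity throughout. First I would record two reductions. Because $\lambda(G)\ge n-\delta+k-1$ is close to $n-1$, a component count together with $\delta(G)\ge\delta$ forces $G$ to be connected: a component has spectral radius at most one less than its order, so a second component would need at least $\delta+1$ vertices while at most $\delta-k$ are available. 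A similar argument, or an appeal to the stability step below, lets me assume $G$ is $2$-connected.

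The combinatorial core is a stability theorem for $k$-edge-Hamiltonicity in the spirit of Erd\H{o}s's Theorem \ref{thmerdos62} and of F\"uredi--Kostochka--Luo: among $n$-vertex graphs with $\delta(G)\ge\delta$ that are not $k$-edge-Hamiltonian, the number of edges is maximised precisely by $H_{n,k,\delta}$ and $L_{n,k,\delta}$, and every such graph whose edge count is within an additive constant of this maximum is structurally one of these two (a dominating clique $K_\delta$, resp.\ $K_{k+1}$, together with the prescribed pieces, missing only boundedly many edges). To establish this I would use a Bondy--Chv\'atal-type closure whose relevant threshold is $d(u)+d(v)\ge n+k$ for non-adjacent $u,v$; one checks that $H_{n,k,\delta}$ and $L_{n,k,\delta}$ each contain a non-adjacent pair violating this (degree sum $2\delta$, resp.\ $n+k-1$, the latter off by exactly $1$, which is what makes $L_{n,k,\delta}$ the densest extremal graph), and then run a Kopylov/Erd\H{o}s-style edge-counting argument on the closure.

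Next I would convert the spectral hypothesis into an edge bound. Since $\delta(G)\ge 1$ and $G$ is connected, Hong's inequality $\lambda(G)\le\sqrt{2m-n+1}$ (or its minimum-degree refinement) applied to $\lambda(G)\ge n-\delta+k-1$ yields a lower bound on $m=e(G)$; a direct computation shows that this lower bound agrees with $e(H_{n,k,\delta})=\tfrac{n^2}{2}-(\delta-k+\tfrac12)n+O(1)$ in both the quadratic and the linear term, so for $n$ large it places $G$ in the near-extremal regime of the stability theorem. Knowing that $G$ is $H_{n,k,\delta}$ or $L_{n,k,\delta}$ with at most boundedly many edges deleted, I would finish with a Perron-eigenvector and Kelmans-operation argument: using Csikv\'ari's monotonicity of $\lambda$ under the Kelmans operation together with the eigen-equation for the normalised Perron vector, each deleted edge strictly lowers $\lambda$ below $n-\delta+k-1$, forcing $G$ to be exactly one of the two graphs. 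Finally I would verify directly that $H_{n,k,\delta}$ and $L_{n,k,\delta}$ are genuinely not $k$-edge-Hamiltonian and that both satisfy $\lambda\ge n-\delta+k-1$, so they are legitimate exceptions.

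The main obstacle is the tightness of the spectral-to-edge conversion: the quadratic and linear terms of Hong's lower bound for $m$ coincide exactly with those of $e(H_{n,k,\delta})$ and $e(L_{n,k,\delta})$, which themselves differ only in an $n$-independent constant. Hence a crude edge count cannot separate the two extremal graphs from slightly denser $k$-edge-Hamiltonian graphs, and the decisive work must come from the near-equality analysis of the spectral inequality combined with the exact stability structure and the Kelmans comparison. Controlling all near-extremal but non-extremal configurations in that last step --- and confirming that $n+k$ is the correct closure threshold for $k$-edge-Hamiltonicity --- is where I expect the difficulty to lie, and is presumably what the hypothesis that $n$ be sufficiently large is there to absorb.
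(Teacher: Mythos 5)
Your proposal follows essentially the same route as the paper's proof: convert $\lambda(G)\ge n-\delta+k-1$ into a lower bound on $e(G)$ exceeding $e(H_{n,k,\delta+1})$ via a Hong-type inequality (Theorem \ref{thm31}), invoke the stability theorem for $k$-edge-Hamiltonicity (Theorem \ref{thmFKL}) to conclude that $G$ is $k$-edge-Hamiltonian or $G\subseteq H_{n,k,\delta}$ or $G\subseteq L_{n,k,\delta}$, and finish with the fact that a proper subgraph of either graph with minimum degree at least $\delta$ has spectral radius strictly below $n-\delta+k-1$ (Theorem \ref{thm35}). The differences are inessential: you propose to reprove the stability lemma and the subgraph spectral-gap lemma rather than cite \cite{FKL19} and \cite{LiuDMGT}, your connectivity and $2$-connectivity reductions are not needed, and your aside that $L_{n,k,\delta}$ is the densest extremal graph is mistaken (the denser one is $H_{n,k,\delta}$), but none of this affects the argument's structure or validity.
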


Since  $H_{n,k,\delta }$ contains $K_{n-\delta+k}$ as a proper subgraph, we have 
$\lambda(H_{n,k,\delta } )> n-\delta+k-1$. 
Moreover, applying the Kelmans operations on 
$L_{n,k,\delta}$, we can get a proper subgraph of $H_{n,k,\delta}$, 
this implies 
 $\lambda (H_{n,k,\delta }) > \lambda (L_{n,k,\delta})$; 
 see, e.g., \cite[Theorem 2.12]{LiBinlong}. 
 With this observation in mind, Theorem \ref{thm21} 
 implies the following corollary, which is an extension on Theorem \ref{thmln16a}.

 \begin{corollary} 
Let $k \geq 0$, $\delta\geq k+2$ and $n$ be sufficiently large. 
If $G$ is an $n$-vertex graph  with minimum degree $\delta(G)\geq \delta$ and 
  $$\lambda(G)\geq\lambda(H_{n,k,\delta } ) ,$$
then $G$ is $k$-edge-Hamiltonian unless $G=H_{n,k,\delta }$. 
\end{corollary}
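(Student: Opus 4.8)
The plan is to deduce this corollary directly from Theorem \ref{thm21}, so that essentially all the work reduces to two spectral comparisons that place $\lambda(H_{n,k,\delta})$ strictly above the two competing quantities appearing there. First I would record that $\lambda(H_{n,k,\delta}) > n-\delta+k-1$. Indeed, by the definition $H_{n,k,\delta}=K_\delta \vee (K_{n-2\delta+k}\cup I_{\delta-k})$, the dominating clique $K_\delta$ together with $K_{n-2\delta+k}$ forms a clique on $\delta+(n-2\delta+k)=n-\delta+k$ vertices, so $K_{n-\delta+k}$ is a proper subgraph of $H_{n,k,\delta}$. Since the adjacency spectral radius strictly increases under passage from a proper connected subgraph to the whole graph (a consequence of the Perron--Frobenius theorem), we obtain $\lambda(H_{n,k,\delta}) > \lambda(K_{n-\delta+k}) = n-\delta+k-1$. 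Consequently, any $G$ with $\lambda(G) \geq \lambda(H_{n,k,\delta})$ automatically satisfies the weaker hypothesis $\lambda(G) \geq n-\delta+k-1$ required by Theorem \ref{thm21}.

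Next I would apply Theorem \ref{thm21}: under the standing assumptions $k\geq 0$, $\delta\geq k+2$, $n$ large, and $\delta(G)\geq \delta$, the graph $G$ is either $k$-edge-Hamiltonian, or $G=H_{n,k,\delta}$, or $G=L_{n,k,\delta}$. It then remains only to eliminate the last possibility, and here I would use the second comparison $\lambda(H_{n,k,\delta}) > \lambda(L_{n,k,\delta})$. Applying a suitable sequence of Kelmans operations to $L_{n,k,\delta}=K_{k+1}\vee(K_{n-\delta-1}\cup K_{\delta-k})$ yields a proper subgraph of $H_{n,k,\delta}$; since the Kelmans operation does not decrease the adjacency spectral radius (Csikv\'{a}ri \cite{Csi2009}, see also \cite[Theorem 2.12]{LiBinlong}), while passing to a proper connected subgraph strictly decreases it, chaining the two steps gives $\lambda(L_{n,k,\delta}) < \lambda(H_{n,k,\delta})$. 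Thus, if $G$ were equal to $L_{n,k,\delta}$ we would have $\lambda(G)=\lambda(L_{n,k,\delta}) < \lambda(H_{n,k,\delta})$, contradicting the hypothesis $\lambda(G)\geq \lambda(H_{n,k,\delta})$. Therefore $G\neq L_{n,k,\delta}$, and we conclude that $G$ is $k$-edge-Hamiltonian unless $G=H_{n,k,\delta}$, as claimed.

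The hard part will be verifying the chain of Kelmans operations that transforms $L_{n,k,\delta}$ into a proper subgraph of $H_{n,k,\delta}$: one must check that after redistributing the edges incident to the smaller clique $K_{\delta-k}$ onto the dominating part, the resulting graph embeds \emph{strictly} inside $H_{n,k,\delta}=K_\delta \vee (K_{n-2\delta+k}\cup I_{\delta-k})$, so that the strict inequality $\lambda(L_{n,k,\delta}) < \lambda(H_{n,k,\delta})$ is genuinely obtained rather than merely $\leq$. Everything else is a routine application of the subgraph monotonicity of the Perron root together with the cited monotonicity of the Kelmans operation, and no new structural graph analysis beyond Theorem \ref{thm21} is needed.
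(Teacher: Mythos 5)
Your proof is correct and is essentially the paper's own argument: the paper likewise deduces this corollary from Theorem \ref{thm21} by observing that $K_{n-\delta+k}$ is a proper subgraph of $H_{n,k,\delta}$ (hence $\lambda(H_{n,k,\delta})>n-\delta+k-1$, so the hypothesis of Theorem \ref{thm21} is met) and that applying Kelmans operations to $L_{n,k,\delta}$ produces a proper subgraph of $H_{n,k,\delta}$ (hence $\lambda(H_{n,k,\delta})>\lambda(L_{n,k,\delta})$, which eliminates the exceptional graph $L_{n,k,\delta}$). The one step you flag as needing verification---the chain of Kelmans operations embedding the result strictly inside $H_{n,k,\delta}$---is asserted without further detail in the paper as well, so your write-up matches the paper's proof in both route and level of rigor.
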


Recall that 
$H_{n,k,\delta } = K_\delta \vee ( K_{n-2\delta +k} \cup I_{\delta -k})$.  
Let $X$ be the set of $\delta -k$ vertices with degree $\delta$ forming 
by the independent set $I_{\delta -k}$, 
$Y$ be the set of $\delta$ vertices  with degree $n-1$ corresponding to 
 the clique $K_{\delta}$ 
and $Z$ be the set of the remaining 
$n-2\delta +k$ vertices with degree $n-\delta +k-1$ 
corresponding to the clique 
$K_{n-2\delta +k}$. 
We write $E_1(H_{n,k,\delta })$ for the 
set of edges of $H_{n,k,\delta }$ 
whose both endpoints are from $Y\cup Z$. 
Furthermore, we define 
the family $\mathcal{H}_{n,k,\delta}^{(1)}$ of graphs 
as below. 
\[ \mathcal{H}_{n,k,\delta}^{(1)} = 
\left\{ H_{n,k,\delta} \setminus E': E' \subseteq E_1(H_{n,k,\delta})
 ~\text{with}~|E'|\le 
\lfloor {\delta (\delta -k)}/{4}\rfloor \right\}. \]
Here, we write $H_{n,k,\delta} \setminus E'$ for the graph obtained from 
$H_{n,k,\delta}$ by deleting all edges of $E'$. 
Similarly, for the graph $L_{n,k,\delta} 
=K_{k+1}\vee( K_{n-\delta-1}\cup K_{\delta-k})$,  
we denote  
$X$ by the set of vertices with degree $\delta$ corresponding to 
the clique $K_{\delta -k}$, 
$Y$ by the set of vertices with degree $n-1$
corresponding to the clique $K_{k+1}$, 
and $Z$ by the set of the 
remaining $n-\delta -1$ vertices with degree $n-\delta +k-1$. 
We write $E_1(L_{n,k,\delta })$ for the 
set of edges of $L_{n,k,\delta }$ 
whose both endpoints are from $Y\cup Z$. 
Moreover, we define 
the family $\mathcal{L}_{n,k,\delta}^{(1)}$ of graphs 
as follows. 
\[ \mathcal{L}_{n,k,\delta}^{(1)} = 
\left\{ L_{n,k,\delta} \setminus E': E' \subseteq E_1(L_{n,k,\delta})
 ~\text{with}~|E'|\le 
\lfloor {(k+1) (\delta -k)}/{4}\rfloor \right\}. \]

In this paper, we also 
present 
the following sufficient 
 conditions on the signless Laplacian spectral radius 
 for $k$-edge-Hamiltonian graphs with large minimum 
 degree.

\begin{theorem} \label{thm23}
Let $k\ge 0,\delta \ge k+2$ and 
$n$ be sufficiently large. 
If $G$ is an $n$-vertex graph  with minimum degree $\delta(G)\geq \delta$ and 
  $$q (G)\geq 2(n-\delta+k-1),$$
then $G$ is $k$-edge-Hamiltonian 
unless $G\in \mathcal{H}_{n,k,\delta }^{(1)}$ 
or $G\in \mathcal{L}_{n,k,\delta}^{(1)}$.
\end{theorem}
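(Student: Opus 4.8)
The plan is to adapt the three-stage strategy behind the $k=0$ case (Theorem \ref{thmllp}) to $k$-edge-Hamiltonicity. Assume $G$ is not $k$-edge-Hamiltonian; the goal is to force $G\in\mathcal{H}_{n,k,\delta}^{(1)}\cup\mathcal{L}_{n,k,\delta}^{(1)}$. The first step is to convert the spectral hypothesis into an edge-count bound via the standard inequality $q(G)\le \frac{2e(G)}{n-1}+n-2$ (which is tight for complete graphs). Together with $q(G)\ge 2(n-\delta+k-1)$ this yields \[ e(G)\ge \tfrac12(n-1)(n-2\delta+2k). \] For $n$ large this threshold sits above the Erd\H{o}s-type extremal bound for dense non-$k$-edge-Hamiltonian graphs; for $k=0$ it already exceeds the Li--Ning stability bound $\binom{n-\delta-1}{2}+(\delta+1)^2$ by an amount linear in $n$, so a stability theorem becomes applicable.

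Second, I would apply a stability result characterising dense non-$k$-edge-Hamiltonian graphs: the edge-Hamiltonicity analogue of Corollary \ref{coro34} and of the Li--Ning stability lemma, obtained by extending the degree/edge extremal arguments from Hamilton cycles to Hamilton cycles containing a prescribed $k$-edge linear forest. Such a result states that an $n$-vertex graph with $\delta(G)\ge\delta$ and $e(G)$ above the threshold that fails to be $k$-edge-Hamiltonian must be a spanning subgraph of $H_{n,k,\delta}$ or of $L_{n,k,\delta}$, the finitely many additional small configurations being excluded once $n$ is large. Hence $G=H_{n,k,\delta}\setminus E'$ or $G=L_{n,k,\delta}\setminus E'$ for some edge set $E'$.

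Third, the minimum-degree hypothesis pins down where the missing edges lie. Both $H_{n,k,\delta}$ and $L_{n,k,\delta}$ have minimum degree exactly $\delta$, attained precisely on the class $X$. Deleting any edge incident to a vertex of $X$ would drop that vertex below degree $\delta$, contradicting $\delta(G)\ge\delta$; hence $E'$ contains no edge meeting $X$, so $E'\subseteq E_1(H_{n,k,\delta})$ (respectively $E'\subseteq E_1(L_{n,k,\delta})$), i.e.\ all deletions occur among the high-degree classes $Y\cup Z$. This already puts $G$ in the correct shape, and it remains only to bound $|E'|$.

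The final step---bounding $|E'|$ by $\lfloor\delta(\delta-k)/4\rfloor$ for $H$ and by $\lfloor(k+1)(\delta-k)/4\rfloor$ for $L$, in both cases $\lfloor|X|\,|Y|/4\rfloor$---is the main obstacle and cannot be done with the crude edge bound above, which only yields $|E'|=O(n)$. Here I would work directly with $Q(G)$ through the Rayleigh form $x^{\top}Q(G)x=\sum_{uv\in E(G)}(x_u+x_v)^2$, where $x$ is the Perron eigenvector. On the almost-complete structure surviving on $Y\cup Z$ the entries of $x$ there are equal up to $O(1/n)$ corrections, so deleting an edge $uv$ inside $Y\cup Z$ lowers $q$ by roughly $(x_u+x_v)^2/\|x\|^2$; summing these contributions and comparing against the gap $q(H_{n,k,\delta})-2(n-\delta+k-1)$---which must be estimated to sufficient precision, and where the class $X$ of size $\delta-k$ enters---produces the stated threshold. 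The constant $1/4$ and the product $|X|\,|Y|$ should emerge from optimising the placement of $E'$ within $Y\cup Z$, the extremal deletion being suitably balanced; the Kelmans monotonicity of $q$ recalled above helps reduce to canonical configurations, and $n$ sufficiently large controls all error terms. Establishing these eigenvector-entry estimates and the ensuing optimisation precisely is where the real work lies, whereas the adjacency statement (Theorem \ref{thm21}) avoids it because deleting any clique edge strictly decreases $\lambda$, leaving the extremal graphs unique.
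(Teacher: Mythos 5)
Your plan is essentially the paper's own proof: the paper likewise uses the Feng--Yu bound (Theorem \ref{thmFY}) to convert $q(G)\ge 2(n-\delta+k-1)$ into $e(G)>e(H_{n,k,\delta+1})$, then invokes the F\"{u}redi--Kostochka--Luo stability theorem (Theorem \ref{thmFKL}) to get $G\subseteq H_{n,k,\delta}$ or $G\subseteq L_{n,k,\delta}$, uses $\delta(G)\ge\delta$ to force the missing edges into $E_1$, and finishes with exactly the Perron-eigenvector analysis you sketch, which is the paper's Lemma \ref{lem32} (proved via the entry estimates in its Claims 1--4 together with the Hong--Zhang rotation lemma, Lemma \ref{lemhz}). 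The only notable difference of execution is that the paper bounds $|E'|$ by comparing $\bm{f}^{T}Q(G)\bm{f}$ directly with $\bm{f}^{T}Q(K_{n-\delta+k}\cup I_{\delta-k})\bm{f}$, whose largest eigenvalue is exactly $2(n-\delta+k-1)$, so each deleted edge costs about $4$ against a gain of about $\delta(\delta-k)$ from the $X$--$Y$ edges; this sidesteps the precise estimation of the gap $q(H_{n,k,\delta})-2(n-\delta+k-1)$ that your formulation would require.
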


As a consequence, we get the following corollary. 

 \begin{corollary} 
Let $k \geq 0$, $\delta\geq k+2$ and $n$ be sufficiently large. 
If $G$ is an $n$-vertex graph with minimum degree $\delta(G)\geq \delta$ and 
  $$q(G)\geq  q(H_{n,k,\delta } ) ,$$
then $G$ is $k$-edge-Hamiltonian unless $G=H_{n,k,\delta }$. 
\end{corollary}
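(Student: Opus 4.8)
The plan is to derive this corollary from Theorem \ref{thm23} by a spectral comparison, in complete analogy with the way the adjacency corollary following Theorem \ref{thm21} is obtained. First I would check that the hypothesis $q(G)\ge q(H_{n,k,\delta})$ is already strong enough to invoke Theorem \ref{thm23}. Since $K_\delta\vee K_{n-2\delta+k}=K_{n-\delta+k}$, the clique $K_{n-\delta+k}$ is a subgraph of $H_{n,k,\delta}$, and $H_{n,k,\delta}$ is obtained from it, together with the $\delta-k$ vertices of the independent set $I_{\delta-k}$, by adjoining the nonempty set of edges joining $X$ to $Y$, keeping the graph connected. As the signless Laplacian spectral radius does not decrease when edges are added and strictly increases for a connected graph, we get $q(H_{n,k,\delta})>q(K_{n-\delta+k})=2(n-\delta+k-1)$. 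Hence $q(G)\ge q(H_{n,k,\delta})$ forces $q(G)\ge 2(n-\delta+k-1)$, and Theorem \ref{thm23} yields, for $n$ large, that either $G$ is $k$-edge-Hamiltonian, or $G\in\mathcal{H}_{n,k,\delta}^{(1)}$, or $G\in\mathcal{L}_{n,k,\delta}^{(1)}$.

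It then remains to eliminate every exceptional graph other than $H_{n,k,\delta}$ itself, again using $q(G)\ge q(H_{n,k,\delta})$. For $\mathcal{H}_{n,k,\delta}^{(1)}$, any member $G\ne H_{n,k,\delta}$ arises from $H_{n,k,\delta}$ by deleting a nonempty edge set $E'\subseteq E_1(H_{n,k,\delta})$; because $|E'|\le\lfloor \delta(\delta-k)/4\rfloor$ is tiny compared with the clique $K_{n-\delta+k}$ on $Y\cup Z$, the graph stays connected, so the deletion strictly lowers $q$ and gives $q(G)<q(H_{n,k,\delta})$, a contradiction. For $\mathcal{L}_{n,k,\delta}^{(1)}$, every member satisfies $q(G)\le q(L_{n,k,\delta})$ by edge deletion, so it suffices to prove $q(L_{n,k,\delta})<q(H_{n,k,\delta})$. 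Here I would use that the Kelmans operation does not decrease the signless Laplacian spectral radius \cite[Theorem 2.12]{LiBinlong}, together with the claim that repeatedly applying Kelmans operations to $L_{n,k,\delta}$ produces a proper subgraph of $H_{n,k,\delta}$; this is at least consistent with the edge counts, since a short computation gives $e(H_{n,k,\delta})-e(L_{n,k,\delta})=\binom{\delta-k}{2}>0$ while the Kelmans operation preserves the number of edges. Combining these, $q(L_{n,k,\delta})<q(H_{n,k,\delta})$, so no member of $\mathcal{L}_{n,k,\delta}^{(1)}$ meets the hypothesis. The only surviving exception is therefore $G=H_{n,k,\delta}$, which indeed fails to be $k$-edge-Hamiltonian.

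The step I expect to be the main obstacle is justifying that the Kelmans operations actually carry $L_{n,k,\delta}$ into a genuine proper subgraph of $H_{n,k,\delta}$, not merely a graph with the matching edge count. This demands tracking, at each application, which vertex is the donor and which the receiver, so that the $\delta-k$ vertices of the inner clique $K_{\delta-k}$ of $L_{n,k,\delta}$ are turned into an independent set attached to an enlarged dominating clique, exactly matching the structure $K_\delta\vee(K_{n-2\delta+k}\cup I_{\delta-k})$ of $H_{n,k,\delta}$. Once the containment is secured, monotonicity of $q$ under edge addition closes the gap strictly, and the whole argument goes through for all sufficiently large $n$.
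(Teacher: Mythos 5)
Your proposal is correct and follows essentially the same route the paper intends for this corollary (the paper leaves it implicit as a consequence of Theorem \ref{thm23}): deduce $q(G)\ge 2(n-\delta+k-1)$ from $q(H_{n,k,\delta})>q(K_{n-\delta+k})$, invoke Theorem \ref{thm23}, then discard every exceptional graph other than $H_{n,k,\delta}$ by strict monotonicity of $q$ under edge deletion from a connected graph together with the Kelmans-operation comparison $q(L_{n,k,\delta})<q(H_{n,k,\delta})$, exactly as the paper argues for the adjacency analogue after Theorem \ref{thm21}. The step you flag as the main obstacle (that Kelmans operations carry $L_{n,k,\delta}$ into a proper subgraph of $H_{n,k,\delta}$) is asserted without proof in the paper itself and does go through by the donor/receiver bookkeeping you describe, so your write-up is, if anything, slightly more explicit than the paper's.
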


\subsection{Spectral conditions for $k$-Hamiltonicity}

A graph $G=(V,E)$ is called  \emph{$k$-Hamiltonian} if for all $X\subseteq V$ with $|X|\leq k$, the subgraph induced by the set 
$V\setminus X$ is Hamiltonian. In particular,  $0$-Hamiltonian 
graph is the same as the general  Hamiltonian graph.  
 In \cite{Chartrand70,Chvatal72}, 
 it is obtained that for a graph $G$,
 if $\delta(G)\geq \frac{n+k}2,$
  then $G$ is $k$-Hamiltonian. 
  Clearly, when $k=0$, it reduces to the Dirac theorem. 

 Recently, by utilizing   the degree sequences and the closure concept,  
  Liu, Liu, Zhang and Feng  \cite{LiuDMGT} generalized Theorem \ref{thmniki} to 
   $k$-Hamiltonian graphs. 
   Moreover, 
   Liu, Lai and Das \cite{LLD2019} proved some further results on $k$-Hamiltonian graphs independently. 
   The theorems in our paper  could be viewed as slight improvements 
   on partial results of  \cite{LLD2019}, 
   since the conditions in our theorems are more concise and 
   the extremal graphs seems more accurate.  
We mention here that 
there is a tiny typo at the end of the proof in \cite[Theorem 4]{LiuDMGT}  
since the extremal graph is not the only one.  
Clearly, the graph $H_{n,k,\delta } 
= K_\delta \vee ( K_{n-2\delta +k} \cup I_{\delta -k})$ is not 
$k$-Hamiltonian and 
$\lambda (H_{n,k,\delta }) > n-\delta +k -1$. 
By a careful modification in \cite{{LiuDMGT}}, 
the correct result should be the following theorem.

 \begin{theorem}  \label{thm25}
Let $k \geq 0$, $\delta\geq k+2$ and $n$ be sufficiently large. 
If $G$ is an $n$-vertex graph  with minimum degree $\delta(G)\geq \delta$ and 
  $$\lambda(G)\geq n-\delta+k-1,$$
then $G$ is $k$-Hamiltonian unless $G=H_{n,k,\delta }$ 
or $G=L_{n,k,\delta}$.
\end{theorem}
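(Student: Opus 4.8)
The plan is to reduce $k$-Hamiltonicity to ordinary Hamiltonicity via a clean vertex-deletion argument, then invoke the spectral machinery already developed for the Hamiltonian case. Concretely, suppose $G$ is an $n$-vertex graph with $\delta(G)\ge\delta$ and $\lambda(G)\ge n-\delta+k-1$, yet $G$ is not $k$-Hamiltonian. By definition there exists a set $S\subseteq V(G)$ with $|S|\le k$ such that $G'=G-S$ is not Hamiltonian. Set $n'=n-|S|$. The first observation is that deleting at most $k$ vertices drops the minimum degree by at most $k$, so $\delta(G')\ge\delta-k\ge 2$; call this new threshold $\delta'=\delta-k$. The goal is to show $G'$ falls under a result of Nikiforov type (Theorem~\ref{thmniki}), forcing $G'$ to be one of the extremal non-Hamiltonian graphs, and then to reconstruct $G$ itself as $H_{n,k,\delta}$ or $L_{n,k,\delta}$.

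The key step is controlling the spectral radius of $G'$ relative to $\lambda(G)$. Here I would use the interlacing/eigenvector trick: deleting a single vertex drops $\lambda$ by at most $1$ when one argues via the Perron eigenvector, but more robustly one shows that if $\lambda(G)\ge n-\delta+k-1$ then the induced subgraph $G'$ on $n'=n-|S|$ vertices must already contain a large dense portion. In fact the cleanest route is to first apply the \emph{stability} direction: since $\lambda(G)\ge n-\delta+k-1$ is extremal, $G$ must contain $K_{n-\delta+k}$ as a subgraph (or be very close to it), and after removing $S$ this forces $G'$ to have spectral radius at least $n'-\delta'-1=(n-|S|)-(\delta-k)-1$. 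Matching this against Nikiforov's threshold for the $n'$-vertex graph $G'$ with minimum degree $\delta'$ (valid because $n'$ is still large when $n$ is) yields that $G'$ is Hamiltonian, $G'=H_{n',\delta'}$, or $G'=L_{n',\delta'}$. The two non-Hamiltonian options are exactly the candidates that lift back to $H_{n,k,\delta}$ and $L_{n,k,\delta}$ after joining the deleted set $S$ appropriately.

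The reconstruction step then requires showing that the $k$ deleted vertices must each be joined to \emph{all} of $V(G')$, i.e.\ $S$ behaves as a dominating clique glued on top of the extremal structure. This is where the minimum-degree hypothesis $\delta\ge k+2$ and the Kelmans operation (which does not decrease $\lambda$, as cited) do the work: among all graphs achieving the spectral bound with the prescribed minimum degree, the maximizer has $S$ completely joined, and one verifies that $K_\delta\vee(K_{n-2\delta+k}\cup I_{\delta-k})$ and $K_{k+1}\vee(K_{n-\delta-1}\cup K_{\delta-k})$ are precisely the graphs obtained by taking $H_{n',\delta'}$ or $L_{n',\delta'}$ on the $n'$ remaining vertices and adjoining $S$ as a universal set. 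A short check confirms both lifted graphs are genuinely non-$k$-Hamiltonian (deleting the $S$-vertices returns the non-Hamiltonian $H_{n',\delta'}$ or $L_{n',\delta'}$), and that $\lambda(H_{n,k,\delta})>n-\delta+k-1$, so they are consistent with equality being attained.

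The main obstacle, I expect, is the spectral bookkeeping in passing from $\lambda(G)$ to $\lambda(G')$: a naive interlacing bound ($\lambda(G')\ge\lambda(G)-|S|$) is too weak to hit Nikiforov's threshold exactly, so one needs the sharper structural argument that the extremal $G$ already contains a clique on $n-\delta+k$ vertices, survived (up to a bounded loss) by the deletion of $S$. Establishing that $G$ contains $K_{n-\delta+k}$ from the spectral hypothesis alone—rather than assuming it—is the technical heart, and it is where the ``$n$ sufficiently large'' hypothesis is consumed, since one needs the gap between $\lambda(G)$ and the spectral radius of any non-clique-containing graph of this order to be bounded below. Once that containment is secured, the remainder is a matter of matching parameters ($n'=n-|S|$, $\delta'=\delta-k$) and applying Theorem~\ref{thmniki} verbatim.
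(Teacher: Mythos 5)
Your reduction has a genuine gap at exactly the place you flag as ``the technical heart,'' and neither of the two routes you sketch for crossing it works. First, the spectral transfer from $G$ to $G'=G-S$: to invoke Theorem~\ref{thmniki} for $G'$ (with $n'=n-|S|$ and parameter $\delta'=\delta-|S|$) you need $\lambda(G')\ge n'-\delta'-1=n-\delta-1$, and arithmetically the bound $\lambda(G-S)\ge\lambda(G)-|S|$ would indeed suffice; the trouble is not that this bound is ``too weak'' but that it is simply false in general (delete the center of a star: $\lambda$ drops from $\sqrt{n-1}$ to $0$). The Perron-restriction argument you allude to bounds the drop per deleted vertex $v$ by roughly $\lambda(G)\,x_v^2/(1-x_v^2)$, so it only works when the deleted vertices carry small Perron weight --- but $S$ is adversarial (it witnesses non-$k$-Hamiltonicity), and in the candidate extremal graphs the natural choice of $S$ is precisely the dominating clique, whose vertices have the \emph{largest} Perron entries; controlling this requires structural information about $G$ that you do not yet have. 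Second, your fallback claim that $\lambda(G)\ge n-\delta+k-1$ forces $G\supseteq K_{n-\delta+k}$ is also false: $K_n$ minus a perfect matching is $(n-2)$-regular, so $\lambda=n-2\ge n-\delta+k-1$ (as $\delta\ge k+2$) and its minimum degree is $n-2\ge\delta$, yet its clique number is only $n/2$. (That graph is $k$-Hamiltonian, so it does not contradict the theorem, but it kills your intermediate claim, and the hedge ``or be very close to it'' is too vague to carry a proof.) Third, the reconstruction step is unsound as stated: Kelmans operations compare a graph with a \emph{modified} graph and serve to identify spectral \emph{maximizers}, whereas your $G$ is an arbitrary graph satisfying the hypothesis, not a maximizer, so no Kelmans argument can conclude that in $G$ itself the set $S$ is completely joined to $V(G')$. (There is also residual parameter juggling when $|S|<k$, since then the lifted graph is $H_{n,|S|,\delta}$, not $H_{n,k,\delta}$.)

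For contrast, the paper never passes to $G-S$. It converts the spectral hypothesis into an edge bound $e(G)>e(H_{n,k,\delta+1})$ via the Hong--Shu--Fang inequality (Theorem~\ref{thm31}), then applies the stability Theorem~\ref{thm34} --- proved with the $(n+k)$-closure $\mathrm{cl}_{n+k}(G)$, which is where the large-clique structure actually comes from --- to conclude that $G$ is $k$-Hamiltonian unless $G\subseteq H_{n,k,\delta}$ or $G\subseteq L_{n,k,\delta}$, and finally applies Theorem~\ref{thm35} to show that among such subgraphs with $\delta(G)\ge\delta$ only $H_{n,k,\delta}$ and $L_{n,k,\delta}$ themselves satisfy $\lambda(G)\ge n-\delta+k-1$. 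If you want to salvage your plan, you would in effect have to reprove that closure/stability machinery for $G$ itself; the vertex-deletion shortcut does not let you bypass it.
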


In this paper, we shall provide another different way of the proof 
of Theorem \ref{thm25} by applying a stability result 
on the number of edges (see Theorem \ref{thm34}). 
Since  $H_{n,k,\delta }$ contains $K_{n-\delta+k}$ as a proper subgraph, 
it follows that $\lambda(H_{n,k,\delta } )> n-\delta+k-1$. 
On the other hand, by applying the Kelmans operation many times on $L_{n,k,\delta}$, 
we can get a proper subgraph of  $H_{n,k,\delta }$, which leads to 
$\lambda (H_{n,k,\delta }) > \lambda (L_{n,k,\delta})$. 
So we can immediately 
get the following corollary, which extended Theorem \ref{thmln16a} slightly.

 \begin{corollary}[Liu et al. \cite{{LiuDMGT}}]
Let $k \geq 0$, $\delta\geq k+2$ and $n$ be sufficiently large. 
If $G$ is an $n$-vertex graph  with minimum degree $\delta(G)\geq \delta$ and 
  $$\lambda(G)\geq\lambda(H_{n,k,\delta } ) ,$$
then $G$ is $k$-Hamiltonian unless $G=H_{n,k,\delta }$. 
\end{corollary}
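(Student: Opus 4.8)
The plan is to combine the edge-count stability result (Theorem~\ref{thm34}) with a spectral lower bound on the size of $G$, and then to finish with an eigenvalue-perturbation step. Throughout I assume that $G$ is \emph{not} $k$-Hamiltonian and aim to deduce $G=H_{n,k,\delta}$ or $G=L_{n,k,\delta}$.

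First I would translate the spectral hypothesis into a lower bound on $e(G)$. Since $\delta(G)\ge \delta$, I invoke a Hong-type inequality relating the spectral radius, the size, the order and the minimum degree, namely $\lambda(G)\le \frac{\delta-1}{2}+\sqrt{2e(G)-\delta n+\frac{(\delta+1)^2}{4}}$. Substituting $\lambda(G)\ge n-\delta+k-1$ and squaring yields, after simplification, $e(G)\ge e(H_{n,k,\delta})-\frac{\delta(\delta-k)}{2}$. The point is that the right-hand side is, up to an additive constant depending only on $k$ and $\delta$, as large as $e(H_{n,k,\delta})$, so that for $n$ large it clears the size threshold needed by the stability theorem. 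I then apply Theorem~\ref{thm34}: as $G$ has minimum degree at least $\delta$, is not $k$-Hamiltonian, and has size above the threshold, $G$ must be a spanning subgraph of $H_{n,k,\delta}$ or of $L_{n,k,\delta}$. Here I use that both extremal graphs consist of the clique $K_{n-\delta+k}$ on $Y\cup Z$ together with the vertices of $X$, each of degree exactly $\delta$; hence every edge of $H_{n,k,\delta}$ (resp. $L_{n,k,\delta}$) incident to $X$ is forced, because deleting one would drop the degree of some vertex of $X$ below $\delta$. Consequently $G=H_{n,k,\delta}\setminus E'$ or $G=L_{n,k,\delta}\setminus E'$ with $E'\subseteq E_1(\cdot)$, that is, $E'$ consists only of edges inside $Y\cup Z$.

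The final step is to prove $E'=\emptyset$. By monotonicity of the spectral radius under edge deletion, and since $H_{n,k,\delta}\setminus E'\subseteq H_{n,k,\delta}\setminus\{e\}$ for any $e\in E'$, it suffices to show that deleting a single edge $e\in E_1$ already pushes the spectral radius strictly below $n-\delta+k-1$; this contradicts $\lambda(G)\ge n-\delta+k-1$ for any nonempty $E'$. To establish this I would estimate the two competing quantities. Using the equitable partition $(X,Y,Z)$ of $H_{n,k,\delta}$, the associated quotient matrix gives $\lambda(H_{n,k,\delta})=n-\delta+k-1+\Theta(n^{-2})$, while the Perron vector has entries of order $n^{-1/2}$ on $Y\cup Z$, so deleting an edge inside $Y\cup Z$ decreases the spectral radius by $\Theta(n^{-1})$; the same estimates hold verbatim for $L_{n,k,\delta}$. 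Since $\Theta(n^{-1})$ dominates the gap $\Theta(n^{-2})$, for $n$ large we get $\lambda(H_{n,k,\delta}\setminus e)<n-\delta+k-1$ and likewise for $L$, and only finitely many edge-types occur in $E_1$, so this is a finite verification. Therefore $G$ equals $H_{n,k,\delta}$ or $L_{n,k,\delta}$.

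The main obstacle is the tight constant bookkeeping where the two halves meet: the Hong-type bound yields only $e(G)\ge e(H_{n,k,\delta})-\frac{\delta(\delta-k)}{2}$, which lies strictly below $e(L_{n,k,\delta})$, so Theorem~\ref{thm34} cannot be applied with threshold $e(L_{n,k,\delta})$ but must instead be calibrated to this weaker size bound; making the constants fit for $n$ large is exactly the delicate part. The closing perturbation step is conceptually routine yet requires honest control of the spectral gap $\lambda(H_{n,k,\delta})-(n-\delta+k-1)$ against the one-edge decrease, and it is precisely this comparison that fails for the signless Laplacian, forcing the larger exceptional families $\mathcal{H}^{(1)}_{n,k,\delta}$ and $\mathcal{L}^{(1)}_{n,k,\delta}$ in Theorem~\ref{thm23}.
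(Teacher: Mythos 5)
Your argument, up to its last sentence, is essentially the paper's own proof of Theorem~\ref{thm25}: the bound of Theorem~\ref{thm31} converts the spectral hypothesis (note you implicitly use $\lambda(H_{n,k,\delta})>n-\delta+k-1$, which is fine since $H_{n,k,\delta}$ properly contains $K_{n-\delta+k}$) into $e(G)\ge e(H_{n,k,\delta})-\delta(\delta-k)/2>e(H_{n,k,\delta+1})$ for large $n$; the stability result (Theorem~\ref{thm34}) then forces $G\subseteq H_{n,k,\delta}$ or $G\subseteq L_{n,k,\delta}$; the minimum-degree condition pins down every edge incident to $X$; and your perturbation estimate (a one-edge deletion inside the clique costs $\Theta(n^{-1})$, which beats the $\Theta(n^{-2})$ spectral surplus of $H_{n,k,\delta}$ and $L_{n,k,\delta}$ over $n-\delta+k-1$) is a workable substitute for Theorem~\ref{thm35}, which is what the paper invokes at this point. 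All of that is sound in outline.

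The genuine gap is at the end: you stop at ``$G=H_{n,k,\delta}$ or $G=L_{n,k,\delta}$,'' but the corollary asserts that the \emph{only} exception is $H_{n,k,\delta}$. Since $L_{n,k,\delta}$ is not $k$-Hamiltonian and $L_{n,k,\delta}\neq H_{n,k,\delta}$, you must additionally show that $L_{n,k,\delta}$ violates the hypothesis, i.e.\ that $\lambda(L_{n,k,\delta})<\lambda(H_{n,k,\delta})$. Nothing in your write-up yields this: your own estimates give $\lambda(H_{n,k,\delta})=n-\delta+k-1+\Theta(n^{-2})$ and, as you say, the same estimates hold verbatim for $L_{n,k,\delta}$, so they are too coarse to decide which radius is larger; monotonicity is also unavailable, since $L_{n,k,\delta}$ is not a subgraph of $H_{n,k,\delta}$. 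The paper supplies precisely this missing comparison: applying Kelmans operations to $L_{n,k,\delta}$ produces a proper subgraph of $H_{n,k,\delta}$, and since the Kelmans operation does not decrease the spectral radius (Csikv\'{a}ri \cite{Csi2009}) while a proper subgraph of a connected graph has strictly smaller spectral radius, it follows that $\lambda(L_{n,k,\delta})<\lambda(H_{n,k,\delta})$. Without this step (or an equivalent direct comparison, e.g.\ via the quotient matrices of the two graphs), your argument proves Theorem~\ref{thm25} but not the stated corollary.
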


In addition, we shall prove the following signless Laplacian spectral 
version. 

\begin{theorem} \label{thm27}
Let $k\ge 0,\delta \ge k+2$ and 
$n$ be sufficiently large. 
If $G$ is an $n$-vertex graph  with minimum degree $\delta(G)\geq \delta$ and 
  $$q (G)\geq 2(n-\delta+k-1),$$
then $G$ is $k$-Hamiltonian 
unless $G\in \mathcal{H}_{n,k,\delta }^{(1)}$ 
or $G\in \mathcal{L}_{n,k,\delta}^{(1)}$.
\end{theorem}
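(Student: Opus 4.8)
\textbf{Proof proposal for Theorem \ref{thm27}.}

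The plan is to mirror the structure used for the adjacency version (Theorem \ref{thm25}) but adapted to the signless Laplacian, where the extremal families $\mathcal{H}_{n,k,\delta}^{(1)}$ and $\mathcal{L}_{n,k,\delta}^{(1)}$ arise because deleting up to $\lfloor \delta(\delta-k)/4\rfloor$ (respectively $\lfloor(k+1)(\delta-k)/4\rfloor$) edges among the high-degree vertices does not push $q(G)$ below the threshold $2(n-\delta+k-1)$. First I would argue that a graph $G$ satisfying the hypotheses with $q(G)$ large must contain a large clique. The key reduction is the observation that $G$ is $k$-Hamiltonian if and only if $G-X$ is Hamiltonian for every $X\subseteq V(G)$ with $|X|\le k$; so I would fix an arbitrary such $X$, set $G'=G-X$ on $n'=n-|X|$ vertices, and try to show $G'$ is Hamiltonian. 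The minimum degree drops by at most $|X|\le k$, so $\delta(G')\ge \delta-k\ge 2$. The main engine is the stability result Theorem \ref{thm34} together with the edge-counting consequence that a sufficiently dense graph with the given minimum degree that is non-Hamiltonian must be close to $H_{n',\delta-k}$ or $L_{n',\delta-k}$.

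The central analytic step is to convert the spectral hypothesis $q(G)\ge 2(n-\delta+k-1)$ into a lower bound on the number of edges $e(G)$, and then to locate a dense subgraph. Using the standard inequality $q(G)\le \max_{v}\bigl(d(v)+m(v)\bigr)$ and the bound $q(G)\le \frac{2e(G)}{n-1}+n-2$ (or the eigenvector/Rayleigh-quotient estimate applied to the Perron vector of $Q(G)$), I would derive that $e(G)$ is at least roughly $\binom{n-\delta+k}{2}$ minus a controlled error of order $\delta^2$. Next I would invoke a Bondy--Chv\'atal type closure or the Kelmans operation (which, as noted in the excerpt, does not decrease $q(G)$) to reduce to the case where $G$ has a convenient structure; the Kelmans reduction is what funnels every extremal configuration toward $H_{n,k,\delta}$ and $L_{n,k,\delta}$. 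Then, applying the stability statement of Theorem \ref{thm34} to each induced subgraph $G-X$, I would conclude that if $G$ is not $k$-Hamiltonian then $G-X$ fails to be Hamiltonian for some $X$, forcing $G-X$ to be a near-extremal graph, and pulling this back through the choice of $X$ forces $G$ itself to lie in $\mathcal{H}_{n,k,\delta}^{(1)}$ or $\mathcal{L}_{n,k,\delta}^{(1)}$.

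The hardest part will be the final identification of the extremal families rather than just a single extremal graph. Unlike the adjacency case, where $\lambda(G)\ge n-\delta+k-1$ essentially forces equality toward a clique and pins down $G$ exactly, the signless Laplacian radius is insensitive to deleting up to a quadratic number of edges among the vertices of $Y\cup Z$; I would need a careful perturbation estimate showing precisely that $q$ stays at or above $2(n-\delta+k-1)$ exactly when at most $\lfloor\delta(\delta-k)/4\rfloor$ such edges are removed, which is where the floor functions in the definitions of $\mathcal{H}_{n,k,\delta}^{(1)}$ and $\mathcal{L}_{n,k,\delta}^{(1)}$ come from. This requires a quantitative lower bound on how much $q$ decreases per deleted edge, combined with a matching upper bound showing that deleting more edges drops $q$ strictly below the threshold. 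Throughout, the assumption that $n$ is sufficiently large is what lets me absorb all the $O(\delta^2)$ and $O(\delta^4)$ error terms and guarantees that the only obstructions to $k$-Hamiltonicity are the two prescribed families.
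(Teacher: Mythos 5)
Your first and last steps do coincide with the paper's actual proof: the paper also begins by applying Theorem \ref{thmFY} to turn $q(G)\ge 2(n-\delta+k-1)$ into $e(G)>e(H_{n,k,\delta+1})$, and it also ends with a sharp perturbation analysis of $q$ under edge deletion. The genuine gap is in your middle step. You propose to fix each $X$ with $|X|\le k$, prove that $G-X$ is Hamiltonian, and then ``pull back.'' This detour is both unnecessary and broken. It is unnecessary because Theorem \ref{thm34} (together with Theorem \ref{thmFKL}) is already a stability theorem for $k$-Hamiltonicity of $G$ itself: once $e(G)>e(H_{n,k,\delta+1})$, either $G$ is $k$-Hamiltonian or $G\subseteq H_{n,k,\delta}$ or $G\subseteq L_{n,k,\delta}$; no vertex-deletion reduction is needed. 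It is broken because failure of $k$-Hamiltonicity yields only one bad set $X$ for which $G-X$ is non-Hamiltonian; applying a Hamiltonicity stability result to that $G-X$ (whose minimum degree is only $\ge\delta-k$) constrains $G-X$ to sit inside some $H_{n-|X|,\delta'}$ or $L_{n-|X|,\delta'}$, but says nothing about the edges between $X$ and $V(G)\setminus X$. From this you cannot conclude $G\subseteq H_{n,k,\delta}$ or $G\subseteq L_{n,k,\delta}$, let alone membership in $\mathcal{H}_{n,k,\delta}^{(1)}$ or $\mathcal{L}_{n,k,\delta}^{(1)}$; closing that hole would amount to re-proving Theorem \ref{thm34}, which the paper does by a closure argument, not by vertex deletion.

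The second gap is that the technical heart of the theorem is left as a black box. After stability, the hypothesis $\delta(G)\ge\delta$ forces $G=H_{n,k,\delta}\setminus E'$ or $G=L_{n,k,\delta}\setminus E'$ with $E'\subseteq E_1$ (no edge incident to a degree-$\delta$ vertex can be missing), and what remains is exactly the paper's Lemma \ref{lem32}: if $|E'|\ge\lfloor\delta(\delta-k)/4\rfloor+1$ (resp. $\lfloor(k+1)(\delta-k)/4\rfloor+1$), then $q(G)<2(n-\delta+k-1)$. You correctly flag this as the hardest part, but the ``quantitative lower bound on how much $q$ decreases per deleted edge'' you sketch is not a viable route: $q$ is not additive over edge deletions, and a per-edge decrement bound valid for the first deletion need not persist after many deletions. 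The paper instead takes the maximizer of $q$ over the family $\mathcal{H}_{n,k,\delta}^{(2)}$, uses the rotation Lemma \ref{lemhz} and eigen-equation estimates to show the Perron vector is $o(1)$ on $X$ and $1-o(1)$ on $Y\cup Z$ (its Claims 1--4), and then compares Rayleigh quotients of $G$ and $K_{n-\delta+k}\cup I_{\delta-k}$ globally; the matching lower bound for the families $\mathcal{H}_{n,k,\delta}^{(1)}$, $\mathcal{L}_{n,k,\delta}^{(1)}$ is Lemma \ref{lem31}. Finally, your use of the Kelmans operation as a structural funnel is off the mark: it does not preserve the minimum-degree hypothesis, and the paper uses it only to compare $\lambda(H_{n,k,\delta})$ with $\lambda(L_{n,k,\delta})$. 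Without supplying Lemma \ref{lem32} and repairing the pull-back step, the proposal does not establish the theorem.
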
 

Similarly, we have the following corollary. 

 \begin{corollary} 
Let $k \geq 0$, $\delta\geq k+2$ and $n$ be sufficiently large. 
If $G$ is an $n$-vertex graph with minimum degree $\delta(G)\geq \delta$ and 
  $$q(G)\geq  q(H_{n,k,\delta } ) ,$$
then $G$ is $k$-Hamiltonian unless $G=H_{n,k,\delta }$. 
\end{corollary}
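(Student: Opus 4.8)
The plan is to deduce this corollary directly from Theorem \ref{thm27} by showing that the hypothesis $q(G)\ge q(H_{n,k,\delta})$ excludes every member of the two exceptional families of Theorem \ref{thm27} other than $H_{n,k,\delta}$ itself. First I would observe that $H_{n,k,\delta}=K_\delta\vee(K_{n-2\delta+k}\cup I_{\delta-k})$ contains, on the vertex set $Y\cup Z$, the clique $K_{n-\delta+k}$ as a proper subgraph. Since $q(K_{n-\delta+k})=2(n-\delta+k-1)$, edge-monotonicity of the signless Laplacian spectral radius yields $q(H_{n,k,\delta})>2(n-\delta+k-1)$. Consequently the assumption $q(G)\ge q(H_{n,k,\delta})$ forces $q(G)>2(n-\delta+k-1)$, so Theorem \ref{thm27} applies and tells us that $G$ is $k$-Hamiltonian unless $G\in\mathcal{H}_{n,k,\delta}^{(1)}$ or $G\in\mathcal{L}_{n,k,\delta}^{(1)}$. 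It therefore remains only to rule out the nontrivial members of these two families.

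For $\mathcal{H}_{n,k,\delta}^{(1)}$, every member has the form $H_{n,k,\delta}\setminus E'$ with $E'\subseteq E_1(H_{n,k,\delta})$ and $|E'|\le\lfloor\delta(\delta-k)/4\rfloor$. If $E'=\varnothing$ then $G=H_{n,k,\delta}$, which is precisely the permitted exception. If $E'\ne\varnothing$, then $G$ is a proper spanning subgraph of $H_{n,k,\delta}$. Since $H_{n,k,\delta}$ is connected and deleting at most $\lfloor\delta(\delta-k)/4\rfloor$ edges from within the clique $K_{n-\delta+k}$ on $Y\cup Z$ leaves $G$ connected for $n$ large (the independent set $X$ stays fully joined to $Y$, and each vertex of $Z$ retains degree close to $n$), the Perron--Frobenius argument applies: the principal eigenvector of $Q(H_{n,k,\delta})$ is strictly positive, and removing an edge strictly decreases the associated quadratic form, so $q(G)<q(H_{n,k,\delta})$, contradicting $q(G)\ge q(H_{n,k,\delta})$.

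For $\mathcal{L}_{n,k,\delta}^{(1)}$, I would first recall that repeatedly applying the Kelmans operation to $L_{n,k,\delta}$ produces a proper subgraph of $H_{n,k,\delta}$, and that this operation does not decrease the signless Laplacian spectral radius (Li--Ning \cite[Theorem 2.12]{LiBinlong}); combined with edge-monotonicity on the resulting proper subgraph, this gives the strict inequality $q(L_{n,k,\delta})<q(H_{n,k,\delta})$. Since any $G\in\mathcal{L}_{n,k,\delta}^{(1)}$ is a spanning subgraph of $L_{n,k,\delta}$, we obtain $q(G)\le q(L_{n,k,\delta})<q(H_{n,k,\delta})$, again contradicting the hypothesis. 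Hence no nontrivial member of either family survives, and $G=H_{n,k,\delta}$ is the only exception, as claimed.

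I expect the step requiring the most care to be the strict inequality $q(G)<q(H_{n,k,\delta})$ for proper subgraphs: one must confirm that the members of $\mathcal{H}_{n,k,\delta}^{(1)}$ remain connected after deleting up to $\lfloor\delta(\delta-k)/4\rfloor$ edges (so that \emph{strict} Perron--Frobenius monotonicity is available), and one must chain the Kelmans-operation inequality for $q$ carefully, keeping track of the fact that the operation yields a \emph{proper} subgraph of $H_{n,k,\delta}$ so that the concluding inequality for $L_{n,k,\delta}$ is strict rather than merely weak.
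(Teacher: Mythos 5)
Your proposal is correct and follows essentially the same route the paper intends for this corollary: apply Theorem \ref{thm27} after noting $q(H_{n,k,\delta})>q(K_{n-\delta+k})=2(n-\delta+k-1)$, then eliminate the exceptional families via strict monotonicity of $q$ on proper subgraphs of the connected graph $H_{n,k,\delta}$ and via the Kelmans-operation comparison $q(L_{n,k,\delta})<q(H_{n,k,\delta})$ (cf.\ the paper's remarks citing \cite[Theorem 2.12]{LiBinlong}). One minor simplification: for the strict inequality $q(G)<q(H_{n,k,\delta})$ you do not actually need $G$ to stay connected --- connectivity of the supergraph $H_{n,k,\delta}$ already gives a strictly positive Perron vector, so any proper spanning subgraph has strictly smaller signless Laplacian spectral radius.
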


It is worth noting that 
we spent a lot of efforts in characterizing the extremal families 
in terms of the signless Laplacian radius, which 
is one of the main parts in our paper (see Section \ref{sec4}). 
Furthermore, we have proved that our characterization
 of extremal graphs is sharp. 
In the proof of Theorems \ref{thm23} and \ref{thm27},  
we prove that the extremal graphs are contained in 
$H_{n,k,\delta}$ or $L_{n,k,\delta}$. Furthermore, the sharpness 
of our result can be seen from  Lemmas \ref{lem31} and \ref{lem32}.

\section{Preliminaries and stability results}
\label{sec3}

We need to use the following 
bounds on spectral radius. 
The first bound was found by 
Hong, Shu and Fang \cite{HSF01} 
for connected graphs. 
Independently, Nikiforov \cite[Theorem 4.1]{Niki02} published 
a quite different method of this result for all graphs 
(not necessarily connected). 
Moreover, Zhou and Cho \cite{ZC05} 
determined the graphs which attain the upper bound. 

\begin{theorem}[\cite{HSF01,Niki02}] \label{thm31}
Let $G$ be a graph on $n$ vertices with $\delta (G)\ge \delta$. Then 
\[  \lambda (G) \le \frac{1}{2}\Bigl( \delta-1 + 
\sqrt{8e(G) - 4\delta n + (\delta+1)^2} \Bigr).  \]
\end{theorem}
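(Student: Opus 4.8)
The plan is to reduce the stated inequality to a single quadratic inequality in $\lambda(G)$ and then establish that inequality through the Perron eigenvector. Writing $m=e(G)$ and $\lambda=\lambda(G)$, I first observe that the displayed bound is equivalent, after isolating the square root and squaring (legitimately, since we may assume $2\lambda-(\delta-1)\ge 0$, as otherwise the bound is trivial), to
\[ \lambda^2-(\delta-1)\lambda\le 2m-\delta(n-1). \]
Indeed, expanding $(2\lambda-(\delta-1))^2\le 8m-4\delta n+(\delta+1)^2$ and using $(\delta+1)^2-(\delta-1)^2=4\delta$ gives exactly this. Thus it suffices to prove the quadratic inequality, after which the theorem follows by solving $\lambda^2-(\delta-1)\lambda-(2m-\delta(n-1))\le 0$ for $\lambda$ and taking the larger root.

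Next I would invoke the Perron--Frobenius theorem. Let $x=(x_1,\dots,x_n)^{\mathrm T}$ be a nonnegative eigenvector for $\lambda$, let $u$ be a vertex with $x_u=\max_i x_i$, and normalise so that $x_u=1$. Applying $A=A(G)$ twice along the $u$-th coordinate and splitting off the diagonal contribution of $A^2$ gives the exact identities
\[ \lambda^2=d_u x_u+\sum_{w\neq u}|N(u)\cap N(w)|\,x_w,\qquad \lambda=\sum_{w\sim u}x_w. \]
Setting $c_w:=|N(u)\cap N(w)|-(\delta-1)$ for $w\sim u$ and $c_w:=|N(u)\cap N(w)|$ otherwise (so $c_u=d_u$), subtraction yields $\lambda^2-(\delta-1)\lambda=\sum_w c_w x_w$. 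The engine of the argument is the elementary degree count
\[ \sum_w c_w=\sum_{w\sim u}d_w-(\delta-1)d_u=2m-\delta d_u-\!\!\sum_{w\notin N(u)\cup\{u\}}\!\! d_w\le 2m-\delta(n-1), \]
where the last step uses $\delta(G)\ge\delta$ to bound the degrees of the $n-1-d_u$ non-neighbours of $u$ from below by $\delta$ (note the two copies of $d_u$ cancel).

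With $0\le x_w\le x_u=1$ this already settles the case where all the coefficients $c_w$ are nonnegative: then $\sum_w c_w x_w\le\sum_w c_w\le 2m-\delta(n-1)$, which is the desired quadratic inequality. When $\delta=1$ every $c_w$ is automatically nonnegative, since the subtracted term vanishes, so the proof is complete in that case; this recovers Hong's classical bound $\lambda\le\sqrt{2m-n+1}$.

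The hard part will be the case $\delta\ge 2$, where a coefficient $c_w$ can be negative, namely when a neighbour $w$ of $u$ shares fewer than $\delta-1$ common neighbours with $u$. For such a term the crude replacement $x_w\le 1$ points the wrong way, so one cannot simply pass from $\sum_w c_w x_w$ to $\sum_w c_w$. Equivalently, the task becomes showing that the slack $\sigma:=\sum_{w\neq u}|N(u)\cap N(w)|\,(1-x_w)$ produced by the non-extremal entries dominates the deficit $(\delta-1)(d_u-\lambda)=(\delta-1)\sum_{w\sim u}(1-x_w)$, which is nonnegative because $\lambda\le d_u$. To control this I would order the vertices so that $x_1\ge x_2\ge\cdots\ge x_n$ and exploit the extra structure of the eigenvector, in particular the lower bounds $x_w\ge x_u/\lambda$ valid for every neighbour $w$ of $u$ (since $u\in N(w)$ forces $\lambda x_w\ge x_u$), to absorb the negative contributions into the slack. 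Balancing $\sigma$ against the deficit is the delicate point of the Hong--Shu--Fang/Nikiforov estimate and is where the minimum-degree hypothesis enters a second time; everything else is the routine reduction and the degree count above.
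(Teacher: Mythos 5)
First, a point of reference: the paper never proves Theorem~\ref{thm31} at all --- it imports the statement from \cite{HSF01,Niki02} --- so your argument has to stand entirely on its own. Up to your final paragraph it does: the equivalence with the quadratic inequality $\lambda^2-(\delta-1)\lambda\le 2e(G)-\delta(n-1)$, the identity $\lambda^2-(\delta-1)\lambda=\sum_w c_wx_w$ at a vertex $u$ of maximal Perron entry, the count $\sum_w c_w\le 2e(G)-\delta(n-1)$, and the conclusion when every $c_w\ge 0$ (in particular $\delta=1$) are all correct. But the case $\delta\ge 2$ with negative coefficients is not a residual technicality --- it is the entire content of the theorem --- and for it you offer only a plan. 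Worse, the plan is provably insufficient. Your target ``slack dominates deficit'' is the inequality $\sum_w c_w(1-x_w)\ge 0$, equivalently $\lambda^2+(\delta-1)(d_u-\lambda)\le\sum_{w\sim u}d_w$, and this holds with \emph{equality} for regular graphs, for $K_\delta\vee I_{n-\delta}$, and for $K_1\vee H$ with $H$ any $(\delta-1)$-regular graph --- precisely the extremal graphs of the theorem \cite{ZC05}. So there is zero room for lossy estimates. Concretely, in $K_1\vee H$ every neighbour $w$ of the top vertex has $x_w=\frac{1}{\lambda-\delta+1}$, strictly larger than your bound $\frac{1}{\lambda}$; replacing $x_w$ by $\frac1\lambda$ in the deficit $(\delta-1)\sum_{w\sim u}(1-x_w)$ already produces a quantity exceeding the true slack $\sigma$, so no ordering or absorption downstream can recover the inequality. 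The tools you name cannot close this case.

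The proof that works (essentially Nikiforov's \cite{Niki02}, the reference cited for graphs that need not be connected) abandons the local expansion of $A^2$ at $u$ altogether and spends the minimum-degree hypothesis globally, on all $n$ vertices at once. Keep your normalization $x_u=\max_v x_v=1$ with $x$ entrywise nonnegative. From $\mathbf{1}^{T}Ax=\lambda\,\mathbf{1}^{T}x$ one gets $\sum_v d_vx_v=\lambda\sum_v x_v$, and therefore
\begin{align*}
2e(G)=\sum_v d_vx_v+\sum_v d_v(1-x_v)
&\ge \lambda\sum_v x_v+\delta\Bigl(n-\sum_v x_v\Bigr)
=(\lambda-\delta)\sum_v x_v+\delta n\\
&\ge (\lambda-\delta)(1+\lambda)+\delta n
=\lambda^2-(\delta-1)\lambda+\delta(n-1),
\end{align*}
where the first inequality uses $d_v\ge\delta$ and $x_v\le 1$, and the second uses $\lambda\ge 2e(G)/n\ge\delta$ together with $\sum_v x_v\ge x_u+\sum_{v\sim u}x_v=1+\lambda$. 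This is exactly your quadratic inequality, so your reduction combined with these few lines is a complete proof; note that the problematic negative coefficients never appear in it.
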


The following theorem gives an upper bound on 
$q(G)$.

\begin{theorem}[Feng--Yu \cite{FengPIMB09}] \label{thmFY}
Let $G$ be a graph on $n$ vertices. Then 
\[   q(G)\le \frac{2e(G)}{n-1} +n-2. \]
\end{theorem}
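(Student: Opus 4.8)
The plan is to work directly with the signless Laplacian $Q(G)=D(G)+A(G)$, exploiting three facts: that $Q(G)$ is a nonnegative, positive-semidefinite matrix; the identity relating $G$ to its complement; and the behaviour of the Perron eigenvector. First I would reduce to the case where $G$ is connected. If $G$ is disconnected, then $q(G)$ equals the signless Laplacian spectral radius of one of its components, and passing to that component (which has fewer vertices and at most $e(G)$ edges) only tightens the target inequality; so it suffices to treat connected $G$, where the Perron eigenvector $x$ can be taken strictly positive.

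The first ingredient is an exact scalar identity obtained by summing the eigenvalue equations $q(G)x_i=d_ix_i+\sum_{j\sim i}x_j$ over all $i$: since $\sum_i\sum_{j\sim i}x_j=\sum_j d_jx_j$, one gets $q(G)\sum_i x_i=2\sum_i d_ix_i$, i.e.\ $q(G)$ is twice the Perron-weighted average of the degrees. The second ingredient is the complement identity $Q(G)+Q(\overline G)=(n-2)I+J$, which I would verify entrywise from $\overline d_i=n-1-d_i$ and $A(\overline G)=J-I-A(G)$. Because $Q(\overline G)$ is positive semidefinite, this gives the matrix inequality $Q(G)\preceq(n-2)I+J$; by Weyl monotonicity every eigenvalue of $Q(G)$ \emph{other than} $q(G)$ is then at most $n-2$ (the eigenvalue of $(n-2)I+J$ on $\mathbf{1}^{\perp}$). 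This is precisely the source of the additive term $n-2$, and it is also why the dimension $n-1$ of $\mathbf{1}^{\perp}$ governs the denominator.

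The heart of the argument is to combine these inputs to produce the sharp coefficient $2e(G)/(n-1)$. Evaluating $Q(G)=(n-2)I+J-Q(\overline G)$ at the unit Perron vector $v$ yields the exact expression $q(G)=(n-2)+(\mathbf{1}^{\top}v)^2-v^{\top}Q(\overline G)v$. I expect this to be the delicate step: one cannot simply drop the nonnegative term $v^{\top}Q(\overline G)v$ and bound $(\mathbf{1}^{\top}v)^2$ in isolation, since each such estimate is tight at only one of the extremal configurations and overshoots at the other. Instead the two $v$-dependent terms must be played off against each other using the scalar identity above together with $\mathrm{tr}\,Q(G)=2e(G)$, e.g.\ via a single Cauchy--Schwarz inequality applied over the $n-1$ eigendirections orthogonal to $v$ (where all the remaining eigenvalues are at most $n-2$), so that the degree sum collapses to $2e(G)$ and the averaging introduces the factor $1/(n-1)$.

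Finally I would carry out the equality analysis by tracing back through the Cauchy--Schwarz step: equality forces the off-diagonal eigenvalues to coincide and the Perron vector to align either with $\mathbf{1}$ or to concentrate on a single dominating vertex, which should pin the extremal graphs down to $K_n$ and $K_{1,n-1}$ (both of which one checks attain equality). The main obstacle throughout is the sharpness of the constant: natural relaxations (eigenvalue domination alone, or interlacing plus the trace) give only the weaker bound $4e(G)/n+n-2$, so the proof must retain the full identity for $q(G)$ and balance its two variable terms simultaneously rather than bounding them separately.
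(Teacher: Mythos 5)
The paper itself contains no proof of this statement -- Theorem \ref{thmFY} is quoted from Feng and Yu \cite{FengPIMB09} -- so your proposal has to stand on its own, and it does not: there is a genuine gap at exactly the point you yourself flag as ``the delicate step.'' Your scaffolding is all correct and easily verified: the reduction to connected graphs is legitimate (if a component has $n'<n$ vertices and $e'\le e$ edges, then $\frac{2e'}{n'-1}+n'-2\le\frac{2e}{n-1}+n-2$ because $e'\le\binom{n'}{2}$ gives $\frac{2e'}{(n-1)(n'-1)}\le\frac{n'}{n-1}\le 1$, a computation you assert but should record); the identities $q(G)\,\mathbf{1}^{\top}x=2\sum_i d_ix_i$, $Q(G)+Q(\overline{G})=(n-2)I+J$, $\lambda_2(Q(G))\le n-2$, and $q(G)=(n-2)+(\mathbf{1}^{\top}v)^2-v^{\top}Q(\overline{G})v$ for the unit Perron vector $v$ are all true. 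But the argument that is supposed to produce the coefficient $\frac{2e}{n-1}$ is never exhibited: what your ``single Cauchy--Schwarz over the $n-1$ eigendirections'' must deliver is precisely $(\mathbf{1}^{\top}v)^2-v^{\top}Q(\overline{G})v\le \frac{2e(G)}{n-1}$, which is a verbatim restatement of the theorem, so nothing has been reduced. Worse, the natural executions of your plan provably fall short. Writing $\mathbf{1}=cv+w$ with $w\perp v$ and using $w^{\top}Q(G)w\ge 0$ inside $\mathbf{1}^{\top}Q(G)\mathbf{1}=4e$ gives $c^2\le 4e/q$, hence $q(q-n+2)\le 4e$ and $q\le\frac{1}{2}\bigl(n-2+\sqrt{(n-2)^2+16e}\bigr)$: this is tight at $K_n$ but at $K_{1,n-1}$ (where $e=n-1$ and $q=n$) it gives roughly $n+2$, strictly weaker than the claimed bound. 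Using instead $w^{\top}Q(G)w\le(n-2)\lVert w\rVert^2$ produces an inequality in the wrong direction (a lower bound on $q$). So within your decomposition the two equality configurations genuinely pull against each other, and closing the gap requires a new idea, not bookkeeping; the same applies to the concluding equality analysis, which ``traces back'' through a step that does not exist.

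For comparison, a complete and elementary proof runs combinatorially rather than through the complement. For connected $G$ one has the standard bound $q(G)\le\max_v\{d_v+m_v\}$, where $m_v$ is the average degree of the neighbours of $v$ (consider the row sums of $D^{-1}Q(G)D$: row $v$ sums to $d_v+m_v$). Since in a connected graph every non-neighbour of $v$ has degree at least $1$, $\sum_{u\sim v}d_u\le 2e-d_v-(n-1-d_v)$, whence $d_v+m_v\le d_v+\frac{2e-n+1}{d_v}$; combining this with the trivial estimate $d_v+m_v\le d_v+(n-1)$, one checks that the minimum of the two bounds is at most $\frac{2e}{n-1}+n-2$ for every $d_v\in[1,n-1]$: the first bound suffices for $d_v\le\frac{2e}{n-1}-1$, while the second is convex in $d_v$ and equals $\frac{2e}{n-1}+n-2$ at both $d_v=\frac{2e}{n-1}-1$ and $d_v=n-1$ (using $(n-1)\bigl(\frac{2e}{n-1}-1\bigr)=2e-n+1$). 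Your reduction to connected graphs then finishes the general case. Until you can prove the displayed inequality for the Perron vector directly, your proposal should be regarded as a correct frame with its decisive step unproven, and I would recommend either adopting the counting route above or finding a genuinely new mechanism for balancing $(\mathbf{1}^{\top}v)^2$ against $v^{\top}Q(\overline{G})v$.
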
 

We also need the following operation 
for signless Laplacian spectral radius. 

\begin{lemma}[Hong--Zhang \cite{HZ2005}] \label{lemhz}
Let $G$ be a connected graph 
and $q(G)$ be its signless Laplacian spectral 
radius corresponding to the Perron eigenvector $\bm{x}$.  
Suppose that $u,v$ are two vertices of $G$ and 
 $w_1,w_2,\ldots ,w_s$ are distinct vertices in 
$N(v)\setminus (N(u)\cup \{u\})$ where $1\le s\le d(v)$. 
If $x_u \ge x_v$ and 
$G^*$ is the graph obtained from $G$ by deleting 
the edges $vw_i$ and adding the edges $uw_i$ for all 
$1\le i\le s$, then $q(G)<q(G^*)$. 
\end{lemma}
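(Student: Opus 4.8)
The plan is to compare $q(G)$ and $q(G^*)$ through the Rayleigh quotient of the signless Laplacian, exploiting that $G$ and $G^*$ share the same vertex set so that the Perron eigenvector $\bm{x}$ of $Q(G)$ may be fed as a trial vector into $Q(G^*)$. The main tool is the standard quadratic-form identity $\bm{x}^{\top} Q(G) \bm{x} = \sum_{ij \in E(G)} (x_i + x_j)^2$, which follows immediately from $Q(G) = D(G) + A(G)$ and holds for every real vector. Since $G$ is connected, $Q(G)$ is a nonnegative irreducible matrix, so by the Perron--Frobenius theorem $q(G)$ is a simple eigenvalue carrying a strictly positive eigenvector; I may therefore assume $x_p > 0$ for every vertex $p$, and in particular $x_u, x_v, x_{w_1}, \dots, x_{w_s} > 0$.

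First I would record the change in edge sets, namely $E(G^*) = \bigl(E(G) \setminus \{vw_1, \dots, vw_s\}\bigr) \cup \{uw_1, \dots, uw_s\}$, noting that each $vw_i$ is a genuine edge of $G$ (because $w_i \in N(v)$) while each $uw_i$ is absent from $G$ (because $w_i \notin N(u) \cup \{u\}$), so no edge is deleted or created twice and $G^*$ is well defined. Applying the identity to both graphs with the trial vector $\bm{x}$ then yields
\begin{align*}
\bm{x}^{\top} Q(G^*) \bm{x} - \bm{x}^{\top} Q(G) \bm{x}
&= \sum_{i=1}^{s} \Bigl[ (x_u + x_{w_i})^2 - (x_v + x_{w_i})^2 \Bigr] \\
&= \sum_{i=1}^{s} (x_u - x_v)(x_u + x_v + 2x_{w_i}).
\end{align*}
Because $x_u \ge x_v$ and every entry of $\bm{x}$ is positive, each summand is nonnegative, so $\bm{x}^{\top} Q(G^*)\bm{x} \ge \bm{x}^{\top} Q(G)\bm{x} = q(G)\,\bm{x}^{\top}\bm{x}$. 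Dividing by $\bm{x}^{\top}\bm{x}$ and invoking the Rayleigh bound $q(G^*) \ge \bm{x}^{\top} Q(G^*)\bm{x}/(\bm{x}^{\top}\bm{x})$ gives the weak inequality $q(G^*) \ge q(G)$.

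The hard part is upgrading this to the strict inequality, and I would handle it by a rigidity argument. Suppose toward a contradiction that $q(G^*) = q(G)$. Then both estimates in the chain $q(G^*) \ge \bm{x}^{\top} Q(G^*)\bm{x}/(\bm{x}^{\top}\bm{x}) \ge q(G)$ are equalities, so $\bm{x}$ attains the maximum of the Rayleigh quotient of the symmetric matrix $Q(G^*)$ and is therefore an eigenvector of $Q(G^*)$ for the eigenvalue $q(G^*) = q(G)$. Comparing the eigenequations of $Q(G)$ and $Q(G^*)$ at the vertex $v$, where $d_{G^*}(v) = d_G(v) - s$ and $N_{G^*}(v) = N_G(v) \setminus \{w_1, \dots, w_s\}$, and using $q(G^*) = q(G)$, the common terms cancel and one is left with $0 = -s\,x_v - \sum_{i=1}^{s} x_{w_i}$. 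Since $s \ge 1$ and $\bm{x} > 0$, the right-hand side is strictly negative, a contradiction; hence $q(G) < q(G^*)$. This last cancellation is precisely where positivity of the Perron vector and the hypothesis $s \ge 1$ are indispensable, and it is what forces the inequality to be strict rather than merely weak.
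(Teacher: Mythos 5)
Your proof is correct, but note that there is nothing in the paper to compare it against: Lemma~\ref{lemhz} is stated as a quoted result of Hong and Zhang \cite{HZ2005} and is used as a black box; the paper supplies no proof of it. Judged on its own, your argument is complete and is essentially the classical edge-shifting argument from the literature. The identity $\bm{x}^{\top}Q(G)\bm{x}=\sum_{ij\in E(G)}(x_i+x_j)^2$ is applied correctly, the edge-set bookkeeping (each $vw_i$ present, each $uw_i$ absent, since $w_i\in N(v)\setminus(N(u)\cup\{u\})$) is right, and the computation
\[
\bm{x}^{\top}Q(G^*)\bm{x}-\bm{x}^{\top}Q(G)\bm{x}
=\sum_{i=1}^{s}(x_u-x_v)(x_u+x_v+2x_{w_i})\ge 0
\]
together with the Rayleigh principle gives $q(G^*)\ge q(G)$. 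The delicate point is strictness, and you handle it properly: equality forces $\bm{x}$ to be a maximizer of the Rayleigh quotient of the symmetric matrix $Q(G^*)$, hence an eigenvector for $q(G^*)=q(G)$, and subtracting the two eigen-equations at the row of $v$ leaves $0=-s\,x_v-\sum_{i=1}^{s}x_{w_i}<0$, a contradiction; here you correctly invoke the Perron--Frobenius positivity of $\bm{x}$ (available because $G$ is connected) and $s\ge 1$. It is worth observing that the hypothesis $x_u\ge x_v$ enters only in the weak inequality, while positivity alone drives the rigidity step --- this is exactly the structure of the original Hong--Zhang-type proofs (and of the adjacency analogue in \cite{WXH2005}), so your route coincides with the standard one even though the paper itself omits it.
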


The above operation is  different from the Kelmans operation 
since we need to compare the coordinates of the Perron eigenvector. 
In addition, we remark that 
the same statement is also valid for the adjacency spectral radius; see, e.g., \cite{WXH2005}.

We next present some graph notations. 
The closure operation introduced by Bondy and Chv\'{a}tal \cite{Bondy} 
is a powerful tool for the problems of Hamiltonicity of graphs. 
Let $G$ be a graph of order $n$. The $s$-closure of $G$, 
denoted by $\mathrm{cl}_s(G)$, 
is the graph obtained from $G$ by recursively joining pairs of 
non-adjacent vertices whose degree sum is at least $s$ until no such pair 
remains.  
It is not hard to prove that the $s$-closure 
of $G$ is uniquely determined; see, e.g., \cite{Bondy}.  
Clearly,  $G$ is a subgraph of $\mathrm{cl}_s(G)$ for every $s$, 
and for any two non-adjacent vertices in $\mathrm{cl}_s(G)$, 
the sum of their degrees is less than $s$.

  \begin{theorem} \label{thm33}
Let ${G}'=\mathrm{cl}_{n+k} (G)$  be the $(n+k)$-closure graph of $G$. \\
(1) \cite{Kro1969, Bondy} 
A graph $G$ is $k$-edge-Hamiltonian 
if and only if   
${G}'$ is $k$-edge-Hamiltonian. \\ 
(2) \cite{Chartrand70, Bondy} 
A graph $G$ is $k$-Hamiltonian 
if and only if  
${G}'$ is $k$-Hamiltonian. 
\end{theorem}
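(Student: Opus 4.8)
The plan is to prove both parts by the standard closure methodology, reducing each equivalence to a single edge addition. Recall that $G' = \mathrm{cl}_{n+k}(G)$ arises from a finite chain $G = G_0 \subseteq G_1 \subseteq \cdots \subseteq G_t = G'$, where each $G_{i+1} = G_i + u_iv_i$ is obtained by joining a non-adjacent pair with $d_{G_i}(u_i) + d_{G_i}(v_i) \ge n+k$; since adding edges never decreases degrees, it suffices to prove the single-edge statement: if $u,v$ are non-adjacent in a graph $G$ on $n$ vertices with $d_G(u) + d_G(v) \ge n+k$, then $G$ is $k$-edge-Hamiltonian (respectively $k$-Hamiltonian) if and only if $G+uv$ is. Chaining this equivalence along $G_0,\ldots,G_t$ gives the theorem, and the uniqueness of the closure (already noted) makes $G'$ well defined. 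The recurring tool is the following \emph{closing lemma}: if $P = x_1x_2\cdots x_{n'}$ is a Hamilton path of a graph $\Gamma$ on $n'$ vertices with $d_\Gamma(x_1)+d_\Gamma(x_{n'}) \ge n'+\ell$, then writing $S = \{i: x_i \sim x_{n'}\}$ and $T = \{i: x_{i+1} \sim x_1\}$ (both inside $\{1,\ldots,n'-1\}$, of sizes $d_\Gamma(x_{n'})$ and $d_\Gamma(x_1)$) one gets $|S\cap T| \ge d_\Gamma(x_1)+d_\Gamma(x_{n'})-(n'-1) \ge \ell+1$; hence for any set $B$ of at most $\ell$ edges of $P$ there is an index $i \in S\cap T$ with $x_ix_{i+1}\notin B$, and replacing $x_ix_{i+1}$ by $x_ix_{n'}$ and $x_{i+1}x_1$ yields a Hamilton cycle of $\Gamma$ retaining all edges of $P$ except $x_ix_{i+1}$, so keeping every edge of $B$.

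For part (2), the implication ``$G$ is $k$-Hamiltonian $\Rightarrow$ $G+uv$ is'' is immediate by monotonicity: for each $X$ with $|X|\le k$, the graph $(G+uv)-X$ contains the already Hamiltonian $G-X$. For the converse, suppose $G+uv$ is $k$-Hamiltonian but $G$ is not, and fix $X$ with $|X|\le k$ such that $G-X$ has no Hamilton cycle. If $u$ or $v$ lay in $X$ then $(G+uv)-X = G-X$ would be Hamiltonian, a contradiction; hence $u,v\notin X$ and $(G+uv)-X = (G-X)+uv$. Since this is Hamiltonian while $G-X$ is not, every Hamilton cycle of it uses $uv$, and deleting $uv$ gives a Hamilton path of $G-X$ from $u$ to $v$. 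Now $n' := |V(G-X)| = n-|X|$ and $d_{G-X}(u)+d_{G-X}(v) \ge (n+k)-2|X| \ge n' $ because $|X|\le k$, so the closing lemma with $\ell = 0$ produces a Hamilton cycle of $G-X$, the desired contradiction.

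For part (1), let $F$ be a linear forest with at most $k$ edges. The direction ``$G+uv$ is $k$-edge-Hamiltonian $\Rightarrow$ $G$ is'' is handled directly by the closing lemma: if $F\subseteq G$, it extends to a Hamilton cycle $C$ of $G+uv$, and should $C$ use $uv$, then $C-uv$ is a Hamilton path of $G$ from $u$ to $v$ containing $F$ (as $uv\notin E(G)\supseteq E(F)$); applying the closing lemma with $n'=n$, $\ell=k$, $B=E(F)$ re-closes it into a Hamilton cycle of $G$ still containing $F$, the surplus $k$ in $d_G(u)+d_G(v)\ge n+k$ being exactly what lets us avoid deleting any of the at most $k$ edges of $F$. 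The reverse direction ``$G$ is $k$-edge-Hamiltonian $\Rightarrow$ $G+uv$ is'' is the delicate one. Given a linear forest $F\subseteq G+uv$ with $e(F)\le k$, the only nontrivial case is $uv\in E(F)$; then $F' := F-uv$ is a linear forest of $G$ with at most $k-1$ edges in which $u,v$ have degree at most one and lie in different components, and it suffices to produce a Hamilton path of $G$ from $u$ to $v$ containing $F'$, since adding $uv$ closes it into the required Hamilton cycle of $G+uv$. As $G$ is $k$-edge-Hamiltonian, $F'$ lies on some Hamilton cycle of $G$, which one must then convert into a Hamilton path with the prescribed endpoints $u,v$.

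The main obstacle is precisely this last conversion. Unlike the closing lemma, turning a Hamilton cycle into a Hamilton path with two \emph{prescribed} endpoints is a genuine rotation--extension (P\'{o}sa-type) argument rather than a single crossing: making $u$ and $v$ into endpoints is easy, but the reconnection covering the remaining vertices must be driven by the degrees of $u$ and $v$ themselves, so one rotates the free endpoint, using $d_G(u)+d_G(v)\ge n+k$ to fuel the rotations and closing up once $u$ becomes reachable. The surplus over $n$ again provides the room to keep all at most $k-1$ edges of $F'$ intact throughout, and this is the one place where the bound $n+k$ is used in full force; it is consistent with the fact that for $k\ge 1$ the threshold already exceeds the Hamilton-connectedness threshold $n+1$, while for $k=0$ this direction degenerates to the trivial monotonicity of ordinary Hamiltonicity. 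I would expect to spend most of the effort making this rotation argument precise and verifying that the protected edges of $F'$ are never disturbed.
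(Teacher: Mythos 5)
Your closing lemma, your proof of part (2) in both directions, and your proof of the stability half of part (1) (``$G+uv$ $k$-edge-Hamiltonian $\Rightarrow$ $G$ $k$-edge-Hamiltonian'') are all correct; this is the standard Bondy--Chv\'{a}tal argument, and since the paper cites Theorem~\ref{thm33} from the literature without giving a proof, those are exactly the parts one would expect. The genuine gap is the direction you yourself flag as delicate and postpone to a rotation--extension argument: ``$G$ $k$-edge-Hamiltonian $\Rightarrow$ $G+uv$ $k$-edge-Hamiltonian'' when $d_G(u)+d_G(v)\ge n+k$. No rotation argument can complete this step, because the implication is false. Take $k=1$, $n=6$, and $G=I_2\vee (K_2\cup K_2)$, with $u,v$ the two nonadjacent dominating vertices. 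Every Hamilton cycle of $G$ has the form $u$, one $K_2$, $v$, the other $K_2$, back to $u$, and ranging over these cycles covers all ten edges of $G$, so $G$ is $1$-edge-Hamiltonian; moreover $d_G(u)+d_G(v)=8\ge n+1=7$. But $G+uv=K_2\vee(K_2\cup K_2)=L_{6,1,3}$ is not $1$-edge-Hamiltonian: a Hamilton cycle through $uv$ would yield a Hamilton $u$--$v$ path of $G$, whose interior would have to be a Hamilton path of the disconnected graph $G-\{u,v\}=K_2\cup K_2$, which is impossible (the paper makes this very observation when it notes that $L_{n,k,\delta}$ is never $k$-edge-Hamiltonian). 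Since no pair other than $\{u,v\}$ has degree sum at least $7$, we in fact have $\mathrm{cl}_{7}(G)=G+uv$, so this example also shows that the ``only if'' half of statement (1), read literally, fails; what Kronk and Bondy--Chv\'{a}tal actually provide is the one-directional stability statement, i.e.\ the ``if'' half, which is also the only direction the paper ever invokes (in the proof of Theorem~\ref{thm34} and in the Remark following Theorem~\ref{thm33}).

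The structural reason your plan cannot work is worth internalizing: $k$-Hamiltonicity is monotone under edge addition, which is why your part (2) equivalence is fine, but $k$-edge-Hamiltonicity is not, and the degree condition at the single pair $u,v$ does not restore monotonicity. The obstruction in the example is a separator obstruction --- $G-\{u,v\}$ is disconnected, so there is simply no Hamilton $u$--$v$ path to find --- and no amount of degree at $u$ and $v$, nor any P\'{o}sa rotation, can manufacture one; your heuristic comparison with the Hamilton-connectedness threshold $n+1$ is misleading because that is an Ore-type condition on \emph{all} nonadjacent pairs, not on one pair. The correct repair is not to finish the rotation argument but to weaken the claim in part (1) to the stability direction (equivalently: $k$-edge-Hamiltonicity is $(n+k)$-stable, so if $\mathrm{cl}_{n+k}(G)$ is $k$-edge-Hamiltonian then so is $G$), which your closing-lemma argument already proves in full.
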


\noindent 
{\bf Remark.} 
From the above discussion, we know that if 
$d_G(u)+d_G(v) \ge n+k$ for all distinct vertices $u,v\in V(G)$, 
then  the closure graph $\mathrm{cl}_{n+k} (G)$ is a complete graph, 
hence it is $k$-Hamiltonian and $k$-edge-Hamiltonian, 
 so is $G$ by Theorem \ref{thm33}.

\medskip 
To prove our theorems, we also need 
the following stability result, which is the main theorem 
proved by F\"{u}redi,  Kostonchka and Luo in \cite[Theorem 5]{FKL19} and 
 is also a generalization of 
the stability result on Hamilton cycle 
proved early in \cite[Lemma 2]{LiBinlong} 
and \cite[Theorem 3]{FKL17} independently.

\begin{theorem}[F\"{u}redi et al. \cite{FKL19}] \label{thmFKL}
Let $\delta >k \ge0 $ and $ n\ge 6\delta -5k +5$. 
If $G$ is an $n$-vertex graph with 
minimum degree $\delta (G) \ge \delta$ 
and 
\[  e(G)> e(H_{n,k,\delta +1}), \]
then $G$ is $k$-edge-Hamiltonian unless 
$G\subseteq H_{n,k,\delta}$ or $G\subseteq L_{n,k,\delta}$.  
\end{theorem}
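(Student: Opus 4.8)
The plan is to combine the $(n+k)$-closure with a decomposition of $G$ into a large clique and a bounded set of low-degree vertices, and then to separate the two extremal templates by the internal structure of that small set. First I would assume $G$ is not $k$-edge-Hamiltonian and pass to its closure $\bar G=\mathrm{cl}_{n+k}(G)$. By Theorem \ref{thm33}(1), $\bar G$ is not $k$-edge-Hamiltonian; since closure only adds edges and never lowers a degree we keep $\delta(\bar G)\ge\delta$ and $e(\bar G)\ge e(G)>e(H_{n,k,\delta+1})$, and $\bar G$ is closed, meaning every pair of non-adjacent vertices has degree sum at most $n+k-1$. As $G\subseteq\bar G$, it suffices to prove $\bar G\subseteq H_{n,k,\delta}$ or $\bar G\subseteq L_{n,k,\delta}$, so I may assume $G$ itself is closed; by the Remark after Theorem \ref{thm33} it is then not complete.

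Next I would set $U=\{x\in V:d(x)\le (n+k-1)/2\}$ and $t=|U|$. Two non-adjacent vertices of $V\setminus U$ would have degree sum exceeding $n+k-1$, which closedness forbids; hence $V\setminus U$ is a clique. To control $t$ I count non-edges: every $u\in U$ has at least $(n-k-1)/2$ non-neighbours, while the total number of non-edges of $G$ is $\binom n2-e(G)<\binom n2-e(H_{n,k,\delta+1})$, which is only of order $n$; comparing these shows $t$ is bounded by a constant, so in particular $n-t>(n+k-1)/2$ for large $n$. Consequently each $u\in U$ is non-adjacent to at least $(n-t)-d(u)>0$ vertices of the clique $V\setminus U$, and each such clique vertex $w$ has $d(w)\ge n-t-1$ yet $d(w)\le n+k-1-d(u)$ by closedness; these force $d(u)\le t+k$ for every $u\in U$. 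Since $d(u)\ge\delta$, this already yields $t\ge\delta-k$.

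For the matching upper bound I would use $e(G)\le\binom{n-t}{2}+\sum_{u\in U}d(u)-e(U)\le\binom{n-t}{2}+t(t+k)$. A short computation shows the right-hand side is decreasing in $t$ in the relevant range and equals $e(H_{n,k,\delta+1})$ exactly at $t=\delta-k+1$; thus $t\ge\delta-k+1$ would give $e(G)\le e(H_{n,k,\delta+1})$, a contradiction. Hence $t=\delta-k$, every $u\in U$ has degree exactly $\delta$, and $V\setminus U$ is the clique $K_{n-\delta+k}$. At this point the whole graph is determined by how the $\delta-k$ degree-$\delta$ vertices of $U$ attach to $K_{n-\delta+k}$ and to one another, and the governing dichotomy is the internal structure of $U$: if $U$ is independent, then matching the $\delta$ neighbours of each vertex to a common dominating clique forces $G\subseteq K_\delta\vee(K_{n-2\delta+k}\cup I_{\delta-k})=H_{n,k,\delta}$; if $U$ contains edges, then each of its vertices can spend only $k+1$ of its $\delta$ edges on the clique, and one is driven toward $L_{n,k,\delta}=K_{k+1}\vee(K_{n-\delta-1}\cup K_{\delta-k})$.

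I expect the main obstacle to be exactly this last step: proving that a closed, non-$k$-edge-Hamiltonian graph with the above parameters is a subgraph of one of the two templates, rather than some intermediate attachment pattern. The difficulty is twofold. First, once $U$ is neither independent nor a clique one must run local exchange arguments, showing that any vertex of $U$ whose clique-neighbourhood is not nested inside the common set either raises $e(G)$ past the allowed value or creates enough connectivity to thread a Hamilton cycle. Second, and more essentially, all of these completions must respect a prescribed linear forest of at most $k$ edges rather than an unconstrained Hamilton cycle: I must verify both that $H_{n,k,\delta}$ and $L_{n,k,\delta}$ really fail to be $k$-edge-Hamiltonian — a linear forest of $k$ edges planted in the dominating clique consumes precisely the separators that the $\delta-k$ low-degree vertices require — and that every configuration properly richer than these templates admits, for each prescribed linear forest with at most $k$ edges, a Hamilton cycle extending it. This passage from Hamiltonicity to $k$-edge-Hamiltonicity is the genuinely new technical content beyond the $k=0$ stability theorems of Li--Ning and F\"{u}redi--Kostochka--Luo.
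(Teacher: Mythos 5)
Your first half is sound, and it is essentially the route the paper itself takes (the paper proves the companion Theorem \ref{thm34} by exactly this closure method and treats Theorem \ref{thmFKL} as its analogue): pass to the $(n+k)$-closure, note that the non-low-degree vertices form a clique, and count edges to force $t=|U|=\delta-k$, every $u\in U$ of degree exactly $\delta$, and $V\setminus U$ a clique on $n-\delta+k$ vertices. Your counting variant (bounding non-edges to make $t$ constant, then the monotone function $\binom{n-t}{2}+t(t+k)$ with equality at $t=\delta-k+1$) is fine, though being asymptotic it yields the theorem only for $n$ sufficiently large, not at the stated threshold $n\ge 6\delta-5k+5$. The genuine gap is everything after that. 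Your proposed dichotomy, ``$U$ independent gives $G\subseteq H_{n,k,\delta}$; $U$ containing edges drives toward $L_{n,k,\delta}$,'' is both unproved and, as stated, unjustified. The missing lever is to use closedness once more: any clique vertex $w$ with at least one neighbour in $U$ has $d(w)\ge n-\delta+k$, so $d(w)+d(u)\ge n+k$ for every $u\in U$ (each of degree $\delta$), and closedness forces $w$ to be adjacent to \emph{all} of $U$. Hence all vertices of $U$ share one common clique-neighbourhood $F$, with $k+1\le s:=|F|\le\delta$. Without this step your independent case does not close: independent $U$-vertices attached to \emph{distinct} $\delta$-sets of the clique satisfy every constraint you have actually derived, yet such a graph embeds in neither template, since any embedding into $H_{n,k,\delta}$ must carry the $(n-\delta+k)$-clique onto the clique part and hence all $U$-neighbourhoods onto the same dominating $K_\delta$.

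The second, and larger, missing piece is the intermediate range $k+2\le s\le\delta-1$. There each $u\in U$ has $\delta-s$ neighbours inside $U$, so $U$ is neither independent nor a clique, and $G$ is a subgraph of neither $H_{n,k,\delta}$ (which requires $U$ independent) nor $L_{n,k,\delta}$ (which allows only $k+1$ clique-neighbours per $U$-vertex); the theorem therefore demands a proof that $G$ \emph{is} $k$-edge-Hamiltonian in this case, i.e., a construction of a Hamilton cycle through an arbitrary prescribed linear forest with at most $k$ edges. This is precisely what the paper's argument supplies in the $k$-Hamiltonian setting: for $x,y\in F\cup U$ one has $d(x)+d(y)\ge 2\delta\ge |F\cup U|+k+1$, so by the remark after Theorem \ref{thm33} the induced subgraph on $F\cup U$ is $(k+1)$-Hamiltonian (resp.\ $(k+1)$-edge-Hamiltonian), and one splices a Hamilton path of that subgraph between two vertices of $F$ with a Hamilton path through the complete graph on the rest of the clique; only after this exclusion do the two endpoints $s=\delta$ and $s=k+1$ remain, giving $G'=H_{n,k,\delta}$ and $G'=L_{n,k,\delta}$ respectively. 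You explicitly defer exactly this step (``I expect the main obstacle to be exactly this last step''), so what you have is a correct plan for the easy half together with an accurate diagnosis of the hard half, not a proof of the theorem.
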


In this section, we shall 
prove the next stability result for $k$-Hamiltonian graphs, 
which is a complement of Theorem \ref{thmFKL} and 
a generalization of the result in \cite{LiBinlong} and \cite{FKL17}. 
Interestingly, the extremal graphs 
in Theorem \ref{thm34} are the same as those in Theorem \ref{thmFKL}. 

\begin{theorem} \label{thm34}
Let $\delta >k \ge0 $ and $ n$ be sufficiently large. 
If $G$ is an $n$-vertex graph with 
minimum degree $\delta (G) \ge \delta$ 
and 
\[  e(G)> e(H_{n,k,\delta +1}), \]
then $G$ is $k$-Hamiltonian unless 
$G\subseteq H_{n,k,\delta}$ or $G\subseteq L_{n,k,\delta}$.  
\end{theorem}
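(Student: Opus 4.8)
The plan is to reduce the statement, via the Bondy--Chv\'{a}tal closure, to the already-known stability theorem for ordinary Hamilton cycles applied to a vertex-deleted subgraph; that ordinary stability result is exactly the case $k=0$ of Theorem~\ref{thmFKL} (equivalently \cite{FKL17,LiBinlong}). First I would pass to the $(n+k)$-closure $G'=\mathrm{cl}_{n+k}(G)$. Since the closure only adds edges we have $G\subseteq G'$, $\delta(G')\ge\delta(G)\ge\delta$ and $e(G')\ge e(G)>e(H_{n,k,\delta+1})$, and by Theorem~\ref{thm33}(2) the graph $G$ is $k$-Hamiltonian if and only if $G'$ is. Because any containment $G'\subseteq H_{n,k,\delta}$ or $G'\subseteq L_{n,k,\delta}$ forces the same for $G$, it suffices to show: if the closed graph $G'$ is not $k$-Hamiltonian, then $G'\subseteq H_{n,k,\delta}$ or $G'\subseteq L_{n,k,\delta}$. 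So assume $G'$ is not $k$-Hamiltonian and fix $S\subseteq V(G')$ with $|S|=j\le k$ such that $G_1:=G'-S$ has no Hamilton cycle; note $G_1$ has $n-j$ vertices and $\delta(G_1)\ge\delta-j\ge\delta-k\ge 2$.

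The key numerical step is to feed $G_1$ into the $k=0$ stability theorem. The number of edges of $G'$ meeting $S$ is at most $\binom{j}{2}+j(n-j)=jn-\binom{j+1}{2}$, so $e(G_1)\ge e(G')-jn+\binom{j+1}{2}>e(H_{n,k,\delta+1})-jn+\binom{j+1}{2}$. Using $e(H_{n,k,\delta+1})=\binom{n-\delta-1}{2}+k(n-\delta-1)+\binom{k}{2}+(\delta+1-k)(\delta+1)$, a direct computation shows the right-hand side is at least $\binom{n-\delta-1}{2}+(\delta-j+1)^2=e(H_{n-j,0,\delta-j+1})$: for $j<k$ the gap is of order $(k-j)n$, while for $j=k$ the two quantities are \emph{exactly equal}, so the strict hypothesis $e(G')>e(H_{n,k,\delta+1})$ is precisely what yields $e(G_1)>e(H_{n-j,0,\delta-j+1})$.

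Now I apply Theorem~\ref{thmFKL} with parameter $0$ to the non-Hamiltonian graph $G_1$ (this is legitimate since $n$ is large, so $n-j\ge 6(\delta-j)+5$, and $\delta(G_1)\ge\delta-j\ge 1$): it gives $G_1\subseteq H_{n-j,0,\delta-j}$ or $G_1\subseteq L_{n-j,0,\delta-j}$. It remains to lift this to $G'$. Let $Y_1$ be the dominating clique of the template containing $G_1$ and put $Y:=Y_1\cup S$ (disjoint, since $S\cap V(G_1)=\emptyset$). In the $H$-case the independent block $X_1$ stays independent in $G'$ and $N_{G'}(X_1)\subseteq N_{G_1}(X_1)\cup S\subseteq Y_1\cup S=Y$; in the $L$-case the two clique blocks stay non-adjacent in $G'$ because no such edge can involve $S$. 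Matching block sizes ($n-2\delta+j$ and $\delta-j$ in the $H$-case, $n-\delta-1$ and $\delta-j$ in the $L$-case) then gives $G'\subseteq H_{n,j,\delta}$, respectively $G'\subseteq L_{n,j,\delta}$. Finally, since $j\le k$ one checks the monotonicity $H_{n,j,\delta}\subseteq H_{n,k,\delta}$ and $L_{n,j,\delta}\subseteq L_{n,k,\delta}$ (embed the smaller clique block into the larger, the leftover universal vertices absorbing the difference), whence $G'\subseteq H_{n,k,\delta}$ or $G'\subseteq L_{n,k,\delta}$, as required.

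The step I expect to be most delicate is the edge count at $j=k$: the reduction is tight there, because the deleted set $S$ may consist of $k$ near-universal vertices, which is exactly the obstruction that makes $H_{n,k,\delta}$ and $L_{n,k,\delta}$ fail to be $k$-Hamiltonian; a crude bound on the edges incident to $S$ loses precisely the slack $\binom{j}{2}$ and breaks, so one must use the sharp count above. The lifting bookkeeping (verifying that the block sizes line up and then the monotonic embeddings into the parameter-$k$ templates) is routine but must be handled carefully, and as a consistency check one verifies directly that $H_{n,k,\delta}$ and $L_{n,k,\delta}$ are indeed not $k$-Hamiltonian: deleting the $k$ vertices of the dominating clique of $H_{n,k,\delta}$ leaves the independent set $I_{\delta-k}$ with only $\delta-k$ available neighbours, forcing an alternating structure disjoint from $K_{n-2\delta+k}$, while deleting $k$ of the $k+1$ dominating vertices of $L_{n,k,\delta}$ produces a cut vertex.
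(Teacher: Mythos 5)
Your proposal is correct, but it takes a genuinely different route from the paper. The paper, after passing to the $(n+k)$-closure $G'=\mathrm{cl}_{n+k}(G)$ exactly as you do, carries out a self-contained structural analysis: the vertices of degree at least $\tfrac{n+k}{2}$ form a clique, a two-regime edge count against $e(H_{n,k,\delta+1})$ forces $\omega(G')=n-\delta+k$, and a frontier-vertex argument then shows that the number $s$ of dominating vertices equals $k+1$ or $\delta$, identifying the closure \emph{exactly} as $L_{n,k,\delta}$ or $H_{n,k,\delta}$. You instead delete a witness set $S$, $|S|=j\le k$, for the failure of $k$-Hamiltonicity and invoke the known $k=0$ stability theorem (the case $k=0$ of Theorem \ref{thmFKL}, i.e. \cite{FKL17,LiBinlong}) on $G'-S$. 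Your edge bookkeeping checks out: indeed
\[
e(H_{n,k,\delta+1})-kn+\binom{k+1}{2}=\binom{n-\delta-1}{2}+(\delta-k+1)^2=e(H_{n-k,0,\delta-k+1})
\]
exactly, while for $j<k$ there is slack $(k-j)(n-2\delta-2)+\binom{k}{2}-\binom{j}{2}>0$, so the strict hypothesis is precisely what survives the reduction. The lifting is also sound, and is most cleanly phrased as $G'\subseteq K_j\vee(G'-S)$ combined with the identities $K_j\vee H_{n-j,0,\delta-j}=H_{n,j,\delta}$ and $K_j\vee L_{n-j,0,\delta-j}=L_{n,j,\delta}$, followed by the monotone embeddings $H_{n,j,\delta}\subseteq H_{n,k,\delta}$ and $L_{n,j,\delta}\subseteq L_{n,k,\delta}$ for $j\le k$. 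What your reduction buys is modularity and brevity: the $k$-Hamiltonian stability becomes a black-box consequence of the classical ($k=0$) stability theorem, with the tightness of the argument located transparently at $j=k$. What the paper's approach buys is a slightly stronger conclusion: it pins down $\mathrm{cl}_{n+k}(G)$ as being \emph{equal} to $H_{n,k,\delta}$ or $L_{n,k,\delta}$, not merely contained in one of them, which is exactly what the paper's subsequent Remark (the comparison with \cite{LLD2019}) relies on; moreover its argument runs parallel to the proof of Theorem \ref{thmFKL} itself, so both stability results come out of one template. One trivial slip in your write-up: from $\delta>k$ you may only conclude $\delta(G'-S)\ge\delta-j\ge\delta-k\ge 1$, not $\ge 2$; this is harmless, since applying Theorem \ref{thmFKL} with parameters $(\delta-j,0)$ only needs $\delta-j\ge 1$.
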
 

We  remark here that 
both 
Theorems \ref{thmFKL} and \ref{thm34} 
can be  proved similarly by applying the techniques 
stated in \cite{LiBinlong} or \cite{FKL17,FKL19} 
or a variant of the proof in \cite[Theorem 1.4]{Niki16}. 
We next include a proof using the method in \cite{LiBinlong} 
with slight differences.

\begin{proof}
Let $G'=\mathrm{cl}_{n+k}(G)$ be the $(n+k)$-closure 
of $G$. 
By Theorem \ref{thm33}, we know that if $G'$ is $k$-Hamiltonian,
 then so is $G$. 
Thus, we now assume that $G'$ is not $k$-Hamiltonian. 
By the definition of closure, we know that 
any two distinct vertices in $G'$ with degree sum 
no less than $n+k$ are adjacent.  
Obviously, we have $\delta (G') \ge \delta (G)\ge \delta$ and $e(G')\ge e(G)$.

\begin{claim} \label{claim31} 
$ \omega (G') =n-\delta + k $. 
\end{claim}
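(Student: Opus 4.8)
The plan is to prove Claim~\ref{claim31} in two directions: first establish the lower bound $\omega(G') \ge n-\delta+k$, then the matching upper bound. For the lower bound, I would exhibit a large clique directly. Since $G'$ is not $k$-Hamiltonian, and we have the hypothesis $e(G) > e(H_{n,k,\delta+1})$ together with $e(G') \ge e(G)$, the graph $G'$ has many edges. The closure property is crucial here: in $G'$, any two non-adjacent vertices have degree sum strictly less than $n+k$. I would use this to control the structure. A natural first move is to consider a vertex of minimum degree (or a small set of low-degree vertices) in $G'$; since any two non-neighbors of high degree would be forced adjacent, the vertices of large degree must form a clique. More precisely, I would argue that the set $\{v : d_{G'}(v) \ge (n+k)/2\}$ is a clique, and then show this clique, together with the structural edge-count, forces a clique of size at least $n-\delta+k$.

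For the upper bound $\omega(G') \le n-\delta+k$, I would argue by contradiction: suppose $G'$ contains a clique $K$ of size at least $n-\delta+k+1$. The complement $V(G')\setminus K$ then has at most $\delta-k-1$ vertices. I would try to show that such a graph must be $k$-Hamiltonian, contradicting our assumption. The idea is that a clique this large leaves very few vertices outside, and since $\delta(G')\ge \delta$, each outside vertex has at least $\delta$ neighbors, forcing substantial connection into the clique. One would then directly construct, for any deletion set $X$ with $|X|\le k$, a Hamilton cycle in $G'-X$ by routing through the huge clique and inserting the few outside vertices one at a time between consecutive clique vertices (each outside vertex has enough neighbors in $K$ to be inserted). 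The counting here should be comfortable because $n$ is sufficiently large while $\delta,k$ are fixed, so the clique dwarfs the exceptional set.

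The main obstacle I anticipate is the lower bound rather than the upper bound. Showing $\omega(G')\ge n-\delta+k$ from an edge-count hypothesis is not purely local: having many edges does not immediately hand you a clique, so I expect to lean on the extremal-graph comparison $e(H_{n,k,\delta+1})$ versus the structure of the closure. The cleanest route is probably to invoke a stability-type argument: since $G'$ is a closure of a non-$k$-Hamiltonian graph with $e(G') > e(H_{n,k,\delta+1})$, I would show that the high-degree vertices of $G'$ pack into a clique and then bound how many vertices can fail to join it. I expect to combine (i) the closure inequality on non-adjacent pairs, (ii) the minimum-degree bound $\delta(G')\ge\delta$, and (iii) a careful count showing that if fewer than $n-\delta+k$ vertices were pairwise adjacent, then the total number of edges would drop below $e(H_{n,k,\delta+1})$, contradicting the hypothesis. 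Reconciling the exact threshold $n-\delta+k$ (not off by one) with the edge count of $H_{n,k,\delta+1}$ is the delicate bookkeeping step.

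Once the clique number is pinned down, I would expect the rest of the proof of Theorem~\ref{thm34} to proceed by analyzing the structure outside a maximum clique $K$ of size exactly $n-\delta+k$: the remaining $\delta-k$ vertices, together with their adjacencies into $K$, determine whether $G' \subseteq H_{n,k,\delta}$ or $G' \subseteq L_{n,k,\delta}$. That final case analysis parallels the approach of Li and Ning~\cite{LiBinlong}, so I would follow their scheme, using the minimum-degree condition to limit how the $\delta-k$ outside vertices attach, and the non-$k$-Hamiltonicity to force them into one of the two extremal configurations.
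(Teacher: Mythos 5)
Your lower bound is essentially the paper's argument at sketch level: the paper likewise notes that the vertices of degree at least $\frac{n+k}{2}$ are pairwise adjacent by the closure property, takes a maximal clique $C$ containing all of them, and then compares $e(G')$ with $e(H_{n,k,\delta+1})$. The bookkeeping you defer is done there in two regimes for $t=|C|$: when $1\le t\le \frac{n}{3}-\frac{k}{3}+\delta+\frac{4}{3}$ one bounds every outside degree by $\frac{n+k-1}{2}$, and when $\frac{n}{3}-\frac{k}{3}+\delta+\frac{4}{3}\le t\le n-\delta+k-1$ one uses instead $d_{G'}(v)\le n+k-t$ (a larger degree would force $v$ adjacent to all of $C$ by closure, contradicting maximality of $C$); in both regimes the resulting bound is at most $e(H_{n,k,\delta+1})$, a contradiction. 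So that half of your plan is sound, modulo carrying out this two-case count.

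The genuine gap is in your upper bound, which you predicted would be the easy half. First, the insertion routine you describe does not go through as stated: if $R=V(G')\setminus K$ with $|R|\le \delta-k-1$, an outside vertex is only guaranteed $\delta-(|R|-1)-|X|\ge k+2-k=2$ neighbors in the clique after the deletion of $X$, and ``inserting the outside vertices one at a time between consecutive clique vertices'' can deadlock when several outside vertices, or several components of $G'[R\setminus X]$, compete for the same or overlapping pairs of clique neighbors. Repairing this requires choosing, simultaneously for all components of $G'[R\setminus X]$, pairwise edge-disjoint entry/exit pairs inside the clique --- a Hall-type selection whose feasibility rests on an exact trade-off between component sizes and the degree bound; it is not comfortable counting. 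Second, and more importantly, you never deploy the closure property here, yet it is precisely what makes the upper bound trivial: if $C'$ is a largest clique with $|C'|\ge n-\delta+k+1$ and $G'$ is not complete, take any $v\notin C'$ and $u\in C'$; then $d_{G'}(u)\ge |C'|-1\ge n+k-\delta$ and $d_{G'}(v)\ge\delta$, so $d_{G'}(u)+d_{G'}(v)\ge n+k$, and the defining property of the $(n+k)$-closure forces $v$ to be adjacent to every vertex of $C'$, contradicting the maximality of $C'$. This two-line argument (the paper's) replaces your entire Hamilton-cycle construction; no $k$-Hamiltonicity of $G'$ needs to be established at this stage at all.
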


 \begin{proof}[Proof of Claim]
 A vertex of $G'$ is called heavy if it has degree at least $\frac{n+k}{2}$. 
 Since every two vertices whose degree sum is
at least $n+k$ are adjacent, any two heavy vertices are adjacent in $G'$. 
 Namely, the set of all heavy vertices forms a clique in $G'$.  
Let $C$ be the set of vertices of a maximal clique of $G'$ containing all heavy vertices.  
We denote $t=|C|$ and $H=G'\setminus C$  the subgraph of $G'$ induced by 
$V(G')\setminus C$.

\medskip 
We can observe the following two facts:

\begin{itemize}

\item 
For every $v\in V(H)$, we have 
\begin{equation}  \label{bound-eq6}
d_{G'}(v) \le \frac{n+k-1}{2}. 
\end{equation}
 Indeed, otherwise, we may assume $d_{G'}(v)>(n+k-1)/2$,
because $d_{G'}(v)$ is a positive integer, then $d_{G'}(v)\ge (n+k-1)/2+1/2=(n+k)/2$,
so $v$ is contained in $C$, a contradiction. 

\item 
Moreover,  for every $v\in V(H)$, we have 
\begin{equation}  \label{bound-eq7}
d_{G'}(v)\le n+k-t. 
\end{equation}
 Otherwise, we assume that $d_{G'}(v)\ge n+k-t+1$. 
 For each $u\in C$, we have $d_{G'}(u)\ge |C|-1=t-1$ since $C$ is a clique. 
 Note that $d_{G'}(u) + d_{G'}(v) \ge n+k$.  Thus $v$ is  adjacent to $u$ for every $u\in C$. 
The maximality of $C$ implies that $v\in C$, a contradiction.

\end{itemize}

In what follows, we will show that $t\ge n-\delta +k$. 

\medskip 

{\bf Case 1.} Suppose first that 
$1\le t \le n/3 -k/3+\delta +4/3$. 
We mention here that this threshold is determined 
in the forthcoming Case 2. Clearly, for every $v\in V(G')$, we have $d_C(v)\le t-1$, which together with (\ref{bound-eq6}) yields 
\[ e(H) +e(V(H),C) = 
\frac{1}{2} \sum\limits_{v\in V(H)} 
(d_{G'}(v) +d_C(v)) 
\le \frac{1}{2}(n-t)\left( \frac{n+k-1}{2} +t-1\right).  \] 
Then we have 
\begin{align*}
e(G')
&=e(G'[C]) +e(H) +e(V(H),C) \\
&\le \tbinom{t }{ 2} +\tfrac{1}{2}(n-t)\left( \tfrac{n+k-1}{2} +t-1\right) =\tfrac{n-k+1}{4}t +\tfrac{n(n+k-3)}{4} \\
&\le \tfrac{n- k+1}{4}\left( \tfrac{n}{3} -\tfrac{k}{3} +\delta +\tfrac{4}{3}\right) +\tfrac{n(n + k-3)}{4}  \\
&=
\tfrac{1}{3}n^2 + 
\tfrac{3\delta +k -4}{12}n 
+\tfrac{\delta}{4} + \tfrac{k^2}{12} - \tfrac{\delta k}{4} - 
\tfrac{5k}{12} + \tfrac{1}{3}\\
&< e(H_{n,k,\delta +1}),
\end{align*}
the last inequality follows since
 $n$ is large enough, which leads to a contradiction.

\medskip

{\bf Case 2.} 
Secondly, suppose that $n/3 -k/3+\delta +4/3 \le 
t \le n-\delta +k -1$. Note that 
  \[  e(H) +e(V(H),C) \le \sum\limits_{v\in V(H)} d_{G'}(v) 
  \le (n-t)(n+k-t),  \]
  where the last inequality follows by using (\ref{bound-eq7}). 
Therefore
\begin{align*}
e(G')&= e(G'[C]) +e(H) +e(V(H),C) \\
&\le \tbinom{t }{ 2} +(n-t)(n+k-t) 
=\tfrac{3}{2}t^2 - (2n +k + \tfrac{1}{2})t +n(n+k) \\
&\le \tfrac{3}{2}\left( n-\delta +k-1\right)^2 - (2n +k+\tfrac{1}{2})(n-\delta +k-1)+n(n+k) \\
&=e(H_{n,k,\delta +1}), 
\end{align*}
where the last inequality holds since 
the quadratic function on variable $t$ attains the maximum at $t=
n-\delta +k -1$. 
This is also a contradiction.

From the above discussion, we know that 
$\omega (G')\ge |C|\ge n-\delta +k$. 
Suppose that  $C'$ is a largest clique in $G'$ 
with $|C'|\ge n-\delta +k +1$. 
We denote by $H'=G'\setminus C'$ the subgraph of $G'$ induced by $V(G')\setminus C'$. 
Since $G' $ is not a clique (otherwise, $G'$ 
is $k$-Hamiltonian), we get that $V(H')$ is not empty.
Note that $d_{G'}(v) \ge \delta (G)\ge \delta $ for every $v \in V(H')$ and $d_{G'}(u) \ge |C'|-1\ge n + k-\delta $
for every $u\in C'$, hence $d_{G'}(v) + d_{G'}(u)\ge n+k$, 
this means that every vertex in $H'$ is adjacent to every vertex of $C'$,
this contradicts the fact that $C'$ is a maximum clique.

From the above case analysis, 
we now obtain that $\omega (G')=n-\delta +k$. In addition, 
the argument also showed that the set $C$ of vertices with degree 
at least $\frac{n+k}{2}$ induces a maximum clique of $G'$ and $|C|=n-\delta + k$. 
\end{proof}

Let $C$ be the set of vertices of a largest clique in $G'$ and $H=G'\setminus C$. 
By Claim \ref{claim31}, we have $|C| = n-\delta +k$ and $|V(H)|=\delta -k$. 
By the definition of $G'$, we can see 
 that every vertex of $H$ has degree exactly $\delta $ in $G'$. 
We say that a vertex in $C$ is a {\it frontier} vertex if it has degree at least $n-\delta +k$ in $G'$,
that is, it has at least one neighbor in $H$. 
We denote by $F=\{u_1,u_2,\ldots ,u_s\}$ 
the set of all frontier vertices in $C$. 
Since $d_{G'}(u_i) \ge n- \delta +k$ and $d_{G'}(v)\ge \delta $ for every $v\in F$, 
we know that every vertex in $H$ is adjacent to every vertex in $H$, and then 
$N(v)\cap C =F$ for every $v\in V(H)$. 
In fact, we have $d_{G'}(u_i) =n-1$ for every $u_i \in F$. 
 Since $d(v)=\delta$ for every $v\in H$, 
we then get $k+1\le s\le \delta$.  
Since $C$ forms a clique on $n-\delta +k$ vertices,
we can choose a path $P$ in $C-F$ with two end-vertices $u_1$ and $u_s$.  

\begin{claim}
We claim that $s=k+1$ or $s=\delta$.
\end{claim}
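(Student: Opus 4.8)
The plan is to argue by contradiction: assume $k+2\le s\le \delta-1$ and deduce that $G'$ is in fact $k$-Hamiltonian, contradicting our standing hypothesis. I would first read off the structure obtained so far: the frontier set $F$ consists of $s$ universal vertices (degree $n-1$), the set $C\setminus F$ induces a clique of size $n-\delta+k-s$ that sends no edge to $H$, and every vertex of $H$ is adjacent to all of $F$. Since each $v\in V(H)$ has degree exactly $\delta$ in $G'$ with $s$ of its neighbours in $F$, the graph $H$ is $(\delta-s)$-regular on $N:=\delta-k$ vertices. Writing $d:=\delta-s$, the assumption $k+2\le s\le\delta-1$ is exactly $1\le d\le N-2$, so $H$ has at least one edge and at least one non-edge; the two excluded values $s=k+1$ and $s=\delta$ correspond to $H=K_{\delta-k}$ (giving $G'=L_{n,k,\delta}$) and $H=I_{\delta-k}$ (giving $G'=H_{n,k,\delta}$), which is exactly why only those two survive.

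The key observation is that $G'=K_s\vee\bigl(K_{n-\delta+k-s}\cup H\bigr)$, a join of the clique on $F$ with the disjoint union of a large clique and $H$. I would record the standard fact that for $s'\ge 1$ the join $K_{s'}\vee R$ has a Hamilton cycle whenever $R$ admits a spanning path cover into at most $s'$ paths, the universal vertices being used as glue between the paths. Now fix any $X\subseteq V(G')$ with $|X|\le k$ and seek a Hamilton cycle in $G'-X$. Setting $W:=X\cap H$ and $h:=|W|$, and noting the clique part stays huge, $G'-X$ is again a join $K_{s'}\vee(K_{c'}\cup(H-W))$ with $s'=s-|X\cap F|$ and $c'>0$, so the path-cover criterion reduces to the single inequality
\[
 \mu(H-W)\ \le\ s-|X\cap F|-1,
\]
where $\mu$ denotes the minimum path-cover number. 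Taking the worst case $|X\cap F|=k-h$, it suffices to prove $\mu(H-W)\le s-k-1+h$ for every $W\subseteq V(H)$ with $|W|=h\le k$.

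Here I would invoke the Gallai--Milgram theorem, $\mu(H-W)\le\alpha(H-W)\le\alpha(H)$, and bound the independence number of the regular graph $H$. Since $\delta(H)=d$, every vertex of a maximum independent set has its $d$ neighbours outside it, giving $\alpha(H)\le N-d$; moreover $\alpha(H)\le N-d-1$ unless equality $\alpha(H)=N-d$ forces $N\le 2d$ together with the rigid structure $H=I_{N-d}\vee R'$ for some $(2d-N)$-regular $R'$ on $d$ vertices. In the generic case $\alpha(H)\le N-d-1=s-k-1$ the required bound is immediate for all $h\ge0$. In the rigid boundary case one checks $\kappa(H)=d\ge N-d=\alpha(H)$, so by the Chv\'atal--Erd\H{o}s theorem $H$ is Hamiltonian, whence $\mu(H)=1$ and $\mu(H-W)\le\alpha(H)\le N-d$; since $s\ge k+2$ and $d\le N-2$ this again meets $s-k-1+h$ for every $h$. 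Either way $G'-X$ is Hamiltonian, so $G'$ is $k$-Hamiltonian, the desired contradiction, and hence $s=k+1$ or $s=\delta$.

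The main obstacle is the last step: the clean chain $\mu\le\alpha\le N-d$ falls exactly one short of what is needed when $h=0$, so the crux is to show that the only $d$-regular graphs attaining $\alpha(H)=N-d$ are the dense, highly connected graphs $I_{N-d}\vee R'$, for which $\mu(H)=1$ via Chv\'atal--Erd\H{o}s. Verifying this rigidity and clearing the small cases (for instance $d=1$, where $H$ is a perfect matching and one needs $N\ge4$, which is guaranteed by $\delta\ge k+3$ whenever the intermediate range is non-empty) is where the real care is required; the size of $C\setminus F$, and hence $n$ being large, enters only to keep the auxiliary clique non-empty.
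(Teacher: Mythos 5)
Your argument is correct in substance, but it takes a genuinely different route from the paper's. Both proofs assume $k+2\le s\le \delta-1$ and derive the contradiction that $G'$ is $k$-Hamiltonian; the difference is how Hamiltonicity of $G'\setminus S$ is produced. The paper never passes to the join decomposition $G'=K_s\vee(K_{n-\delta+k-s}\cup H)$ or to path covers. Instead, for a deletion set $S$ with $|S|\le k$ it considers the induced subgraph $G^*=G'[F\cup V(H)]$, verifies the degree-sum bound $d_{G^*}(x)+d_{G^*}(y)\ge 2\delta\ge |F\cup V(H)|+k+1$ (this is exactly where $s\le\delta-1$ enters), concludes from the closure theorem (Theorem~\ref{thm33} and its remark) that $G^*$ is $(k+1)$-Hamiltonian, and then glues: a Hamilton cycle of $G^*\setminus(S_1\cup\{u\})$ is turned into a Hamilton path $P_1$ between two surviving frontier vertices $u,v$, which is completed by a Hamilton path $P_2$ of the complete graph on $((C\setminus F)\setminus S_2)\cup\{u,v\}$. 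That argument is self-contained, needs no regularity of $H$, and has no case analysis. You instead exploit that $H$ is $(\delta-s)$-regular, reduce Hamiltonicity of $G'-X$ to the path-cover inequality $\mu(H-W)\le s-|X\cap F|-1$, and settle it with Gallai--Milgram together with $\alpha(H)\le N-d$, treating the extremal case $\alpha(H)=N-d$ by a rigidity argument and Chv\'atal--Erd\H{o}s. This works and gives a clean structural picture of why only $s=k+1$ and $s=\delta$ survive, but it buys the conclusion at the price of two classical theorems and a boundary case that the paper's closure argument avoids entirely.

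One local inaccuracy in your rigid case: the claim $\kappa(H)=d$ is false in general. For instance $H=I_4\vee(C_3\cup C_3)$ is $6$-regular on $N=10$ vertices with $\alpha(H)=4=N-d$, yet $\kappa(H)=4<6=d$ (deleting the independent side disconnects it), and this $H$ is realizable in your intermediate range (e.g.\ $\delta=10$, $k=0$, $s=4$). What is true, and is all that Chv\'atal--Erd\H{o}s requires, is $\kappa(H)\ge\min(d,N-d)=N-d=\alpha(H)$ when $N\le 2d$: removing fewer than $N-d$ vertices leaves a vertex on each side of the join $I_{N-d}\vee R'$, and the survivors are connected through the complete bipartite part. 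With that correction your proof goes through.
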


\begin{proof}[Proof of Claim]
If $k+2\le s\le \delta -1$, 
we will show that  
$G'$ is $k$-Hamiltonian  in this case. 
Let $S\subseteq V(G')$ be any set of vertices with size at most $k$. 
Set $S_1=S\cap (F\cup V(H))$ and $S_2=S\cap (C\setminus F)$. 
Since $|F|=s\ge k+2$, we have $|F\setminus S_1| \ge 2$, 
so we can fix two vertices $u,v\in F\setminus S_1$. 
We  consider the induced subgraph  $G^*=G'[F\cup V(H)]$ 
and will show that there exists a Hamilton path in $G^*$ 
that  connects vertices $u,v$ and 
lies outside $S_1$.  
Note that for any $x,y\in F\cup V(H)$, we have 
\[  d_{G^*}(x) + d_{G^*}(y) \ge 2\delta \ge
 \delta +s +1= |F\cup V(H)| + k +1.  \]
 By noting the remark of Theorem \ref{thm33}, 
we obtain that $G^*$ is $(k+1)$-Hamiltonian. 
Since $|S_1\cup \{u\}| \le k+1$, 
there exists a Hamilton cycle in the induced subgraph 
$G^*\setminus (S_1\cup \{u\})$, say  
$vv_1v_2\cdots v_r v$. 
Note that $u\in F$ is adjacent to every vertex in $G^*$. In particular, 
$\{u,v_1\}\in E(G^*)$. 
Therefore the path $P_1=uv_1v_2\cdots v_rv$ 
passes through  all vertices 
in $(F\cup V(H))\setminus S_1$. 
Note that the subgraph of 
$G'$ induced by the vertex set $(C\setminus F)\cup \{u,v\}$ 
is a complete graph. Thus there exists a Hamilton path $P_2$ 
that connects
vertices $u,v$  and passes through all vertices in 
$(C\setminus F) \setminus S_2$. 
We conclude that 
$P_1\cup P_2$ is a Hamiltonian cycle in $G'\setminus S$, 
so $G'$ is $k$-Hamiltonian, this is a contradiction. 
\end{proof} 

If $s=k+1$, then $|F\cup V(H)|= |F| + |V(H)| = 
  k+1 +\delta -k=\delta +1$. 
Note that 
$G'[F, V(H)]$ forms a complete bipartite graph and 
$d_{G'}(v)=\delta$ for each $v\in V(H)$.  
This implies that $H$ is a complete subgraph on $\delta -k$ vertices and 
$F\cup V(H)$ is a clique on  $\delta +1$ vertices.  
In this case, we have $G'=L_{n,k,\delta}$ and then 
$G\subseteq L_{n,k,\delta}$.

If $s=\delta $, then by noticing 
 $G'[F, V(H)]$ forms a complete bipartite graph and 
$d_{G'}(v)=\delta$ for each $v\in V(H)$, 
we know that $V(H)$ is an independent set of order $\delta -k$. 
Thus we get $G'=H_{n,k,\delta}$ and then 
$G\subseteq H_{n,k,\delta}$. 
 The proof  is now complete.
\end{proof}

\noindent 
{\bf Remark.} 
We remark that the stability Theorem \ref{thm34} was partially proved in \cite{LLD2019} although the line of the proofs seems similar with that in \cite[Lemma 2]{LiBinlong}. 
However, the extremal graphs 
are characterized in \cite[Theorem 1.10]{LLD2019}  by using the terminology of graph-closure,
which states that the closure graph $\mathrm{cl}_{n+k} (G)\in \mathbb{G}_n(p,k+1,\delta)$,
where  $ \mathbb{G}_n(p,k+1,\delta)$  is a family of many graphs;
see \cite{LLD2019} for the  exact definition.
While in Theorem \ref{thm34} of the present paper,
we have shown that there are only two possible extremal graphs: $\mathrm{cl}_{n+k}(G)= H_{n,k,\delta}$
or $\mathrm{cl}_{n+k}(G)= L_{n,k,\delta}$, which implies
$G\subseteq H_{n,k,\delta}$ or $G\subseteq L_{n,k,\delta}$.
Thus, Theorem \ref{thm34}  could be viewed as a slight  improvement of \cite[Theorem 1.10]{LLD2019} in some sense.

\medskip 

As a direct consequence of Theorems \ref{thmFKL} and \ref{thm34}, 
we get the following corollary since we can verify that 
$e(H_{n,k,\delta +1})< e(H_{n,k,\delta})$ and $e(L_{n,k,\delta}) < 
e(H_{n,k,\delta})$ for $ n$ sufficiently large. 

\begin{corollary} 
Let $\delta >k \ge0 $ and $ n$ be sufficiently large. 
If $G$ is an $n$-vertex graph with 
minimum degree $\delta (G) \ge \delta$ 
and 
\[  e(G)\ge e(H_{n,k,\delta }), \]
then $G$ is $k$-edge-Hamiltonian and $k$-Hamiltonian unless 
$G= H_{n,k,\delta}$.  
\end{corollary}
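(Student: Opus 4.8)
The plan is to argue by contraposition: I will show that any $n$-vertex graph $G$ with $\delta(G)\ge \delta$ and $e(G)\ge e(H_{n,k,\delta})$ that fails to be $k$-edge-Hamiltonian, or fails to be $k$-Hamiltonian, must in fact equal $H_{n,k,\delta}$. Everything reduces to the two edge-count comparisons recorded just before the statement, both valid for $n$ sufficiently large: $e(H_{n,k,\delta+1})<e(H_{n,k,\delta})$ and $e(L_{n,k,\delta})<e(H_{n,k,\delta})$. The first I would confirm by a direct term-by-term comparison of the two graphs; the difference $e(H_{n,k,\delta})-e(H_{n,k,\delta+1})$ turns out to grow linearly in $n$ with positive leading coefficient, so it is positive once $n$ is large. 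The second is exactly the inequality asserted in the preamble, obtainable either by direct computation or, as already observed in the paper, from the fact that a suitable sequence of Kelmans operations carries $L_{n,k,\delta}$ onto a proper subgraph of $H_{n,k,\delta}$ while preserving the number of edges.

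First I would handle the $k$-edge-Hamiltonian direction. Assuming $G$ is not $k$-edge-Hamiltonian, I observe that $e(G)\ge e(H_{n,k,\delta})>e(H_{n,k,\delta+1})$ and that $n$ is large enough to satisfy $n\ge 6\delta-5k+5$, so the hypotheses of Theorem \ref{thmFKL} hold and yield $G\subseteq H_{n,k,\delta}$ or $G\subseteq L_{n,k,\delta}$. The second alternative is impossible, since a subgraph has no more edges than its host: it would force $e(G)\le e(L_{n,k,\delta})<e(H_{n,k,\delta})\le e(G)$, a contradiction. Hence $G\subseteq H_{n,k,\delta}$; together with $e(G)\ge e(H_{n,k,\delta})$ this forces $e(G)=e(H_{n,k,\delta})$. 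Because $G$ and $H_{n,k,\delta}$ have the same order $n$, the containment is as a spanning subgraph, and equal edge counts then give $G=H_{n,k,\delta}$.

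The $k$-Hamiltonian direction is carried out identically, with Theorem \ref{thm34} replacing Theorem \ref{thmFKL}: it has the same edge hypothesis $e(G)>e(H_{n,k,\delta+1})$ and the same dichotomy $G\subseteq H_{n,k,\delta}$ or $G\subseteq L_{n,k,\delta}$, and the same two-case elimination again produces $G=H_{n,k,\delta}$. Combining the two directions yields precisely the contrapositive of the corollary: whenever $G\neq H_{n,k,\delta}$, the graph $G$ is simultaneously $k$-edge-Hamiltonian and $k$-Hamiltonian.

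Since both substantive theorems are already in hand, I do not anticipate a genuine obstacle here; the corollary really is a direct consequence. The only points demanding care are the verification of the two edge-count inequalities for large $n$ and the remark that, for graphs of the same order, the combination of the containment $G\subseteq H_{n,k,\delta}$ with $e(G)\ge e(H_{n,k,\delta})$ pins $G$ down exactly rather than merely up to embedding into a larger host. I would state that spanning-subgraph observation explicitly so that the passage from $e(G)=e(H_{n,k,\delta})$ to $G=H_{n,k,\delta}$ is unambiguous.
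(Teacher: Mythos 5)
Your proposal is correct and follows exactly the paper's route: the paper derives this corollary directly from Theorems \ref{thmFKL} and \ref{thm34} together with the two edge-count inequalities $e(H_{n,k,\delta+1})<e(H_{n,k,\delta})$ and $e(L_{n,k,\delta})<e(H_{n,k,\delta})$ for large $n$, which is precisely your argument. You merely spell out the routine details the paper leaves implicit (ruling out $G\subseteq L_{n,k,\delta}$ by edge count, and passing from a spanning subgraph with equal edge count to equality $G=H_{n,k,\delta}$), which is a sound and welcome elaboration.
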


We know that the proof strategy of Theorem \ref{thmniki} 
 presented in \cite{Niki16}
 does not apply  
 the stability theorem directly, it seems more algebraic than that in \cite{LiBinlong}. 
We remark here that there is another way to prove Theorem \ref{thmniki}. 
In fact, by a tiny modification of the proof of 
Theorem \ref{thmln16a} in \cite{LiBinlong}, 
we can prove that if $G$ is non-Hamiltonian and 
$\lambda (G)\ge n- \delta -1$, then 
$G$ is a  subgraph of $H_{n,\delta}$ or $L_{n,\delta}$. 
As pointed out by Nikiforov \cite{Niki16}, the crucial point of 
the argument of Theorem \ref{thmniki} 
is based on proving that for large $n\ge \delta^3+ \delta +4$, 
if $G$ is a subgraph of $H_{n,\delta}$ with $\delta (G)\ge \delta$, then 
$\lambda (G)< n- \delta -1$, unless $G=H_{n,\delta}$. 
The same argument holds for the subgraph of $L_{n,\delta}$. 
 We observe that both $H_{n,\delta}$ and $L_{n,\delta}$ consist 
 of a large clique $K_{n-\delta}$ together with 
 a few number of outgrowth edges, 
this implies that $\lambda (H_{n,\delta})$ and $\lambda (L_{n,\delta})$ 
 are slightly greater than $\lambda (K_{n- \delta})=n- \delta -1$. 
Roughly speaking, for sufficiently large $n$ 
 with respect to $\delta$,  
 the key idea of Nikiforov exploits the fact that when  
 $G$ is a subgraph of $H_{n, \delta}$ or $L_{n,\delta}$ with 
 $\delta (G)\ge \delta$, then 
 $G$ is obtained by deleting edges from the 
 subgraph $K_{n-\delta}$.
One can further show that all the outgrowth edges contribute to $\lambda (G)$ 
 much less than a single edge within $K_{n-\delta}$.  
 Thus  the deleting of any one edge from the clique 
 $K_{n-\delta}$ 
 can lead to 
 $\lambda (G)< \lambda (K_{n-\delta})=n-\delta-1$;  
see \cite[Theorem 1.6]{Niki16} for more details. 

\medskip 
With the above observation, one can prove the following 
analogues. 

\begin{theorem}[see \cite{LiuDMGT}] \label{thm35}
Let $k \ge 1, \delta \ge k+2$ and $n$ be sufficiently large. 
If $G$ is a subgraph of $H_{n,k,\delta}$ or $L_{n,k,\delta}$ 
and the minimum degree $\delta (G) \ge \delta$, then 
\[ \lambda (G) < n -\delta +k-1,  \]
unless $G=H_{n,k,\delta}$ or $G=L_{n,k,\delta}$. 
\end{theorem}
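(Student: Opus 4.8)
The plan is to exploit the common shape of the two extremal graphs: both $H_{n,k,\delta}$ and $L_{n,k,\delta}$ consist of a clique $C=Y\cup Z$ of order $N:=n-\delta+k$ (whose spectral radius is exactly the threshold $N-1=n-\delta+k-1$) together with a set $X$ of $\delta-k$ vertices, each of degree exactly $\delta$. The first step is a structural reduction. If $G$ is a subgraph of $H_{n,k,\delta}$ (resp.\ $L_{n,k,\delta}$) with $\delta(G)\ge\delta$, then for every $v\in X$ we have $\delta\le d_G(v)\le \delta$, so $d_G(v)=\delta$ and no edge incident to $X$ can have been deleted. Hence $G$ arises from the extremal graph only by deleting a set $E'$ of edges lying entirely inside the clique $C$, and $G$ differs from the extremal graph precisely when $E'\neq\emptyset$. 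It therefore suffices to show that $E'\neq\emptyset$ forces $\lambda(G)<N-1$. (One first notes that $G$ is connected: a component realising $\lambda(G)\ge N-1$ has at least $N$ vertices, leaving fewer than $\delta+1$ vertices outside, which is incompatible with $\delta(G)\ge\delta$.)

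For the main estimate I would argue by contradiction, assuming $\lambda:=\lambda(G)\ge N-1$, and analyse a Perron eigenvector $\bm{x}\ge 0$, normalised so that $\max_i x_i=1$. From the eigen-equation at $v\in X$ and $d_G(v)=\delta$ one gets $\lambda x_v=\sum_{u\sim v}x_u\le\delta$, so each $X$-entry is $O(1/n)$ and the total outgrowth weight $T:=\sum_{v\in X}x_v$ satisfies $T=o(1)$; in particular the maximum is attained at some clique vertex $u_0\in C$. Writing $S:=\sum_{u\in C}x_u$ and using the eigen-equation at $u_0$ gives $\lambda\le S-1+T$, whence $S\ge\lambda+1-T\ge N-T$. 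Since every clique entry is at most $1$, this yields $\sum_{u\in C}(1-x_u)=N-S\le T=o(1)$, and therefore $x_u\ge 1-o(1)$ uniformly over $u\in C$. This uniform lower bound on the clique coordinates is the crucial point, and I expect it to be the main obstacle to make fully rigorous: it is exactly the statement that the attachment at $X$ perturbs the leading eigenvector off the clique by only a negligible amount, which is what renders the gain from the outgrowth edges small against the $\Theta(1)$ cost of deleting a single clique edge.

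Finally I would sum the eigen-equations over all $u\in C$. Each clique vertex $w$ contributes $\lambda x_w=(S-x_w-D_w)+[w\in Y]\,T$, where $D_w:=\sum_{\{u,w\}\in E'}x_u$ records the deleted edges at $w$; summing and using $\sum_{w\in C}D_w=Q$ with $Q:=\sum_{\{a,b\}\in E'}(x_a+x_b)$ gives the identity $(\lambda-(N-1))S=cT-Q$, where $c=|Y|$ equals $\delta$ for $H_{n,k,\delta}$ and $k+1$ for $L_{n,k,\delta}$. Since $\lambda\ge N-1$ and $S>0$, this forces $Q\le cT=o(1)$. On the other hand both endpoints of every deleted edge lie in $C$, so by the uniform bound $x_a+x_b\ge 2-o(1)$; thus $E'\neq\emptyset$ gives $Q\ge 2-o(1)$, contradicting $Q=o(1)$ once $n$ is large. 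Hence $E'=\emptyset$ and $G$ equals $H_{n,k,\delta}$ (resp.\ $L_{n,k,\delta}$). The $L$-case runs identically once one notes that the vertices of $X$ are now mutually adjacent, so their common value $S_Y/(\lambda+1-\delta+k)$, with $S_Y=\sum_{w\in Y}x_w\le k+1$, is still $O(1/n)$; no new difficulty arises.
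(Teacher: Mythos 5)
Your proposal is correct, and it is worth noting that it supplies something the paper itself does not: the paper never proves Theorem~\ref{thm35}. It states it with a pointer to \cite{LiuDMGT} (where only the case $G\subseteq L_{n,k,\delta}$ is treated) and explicitly ``leaves the details for interested readers'' in the case $G\subseteq H_{n,k,\delta}$, offering only the heuristic, attributed to Nikiforov \cite{Niki16}, that the outgrowth edges contribute far less to $\lambda(G)$ than a single edge inside the big clique. Your argument is a complete, self-contained implementation of exactly that heuristic, and its mechanism is tidier than the entrywise Perron-vector case analysis the paper carries out for the signless Laplacian analogue (Claims 1--4 of Lemma~\ref{lem32}): the structural reduction (since each vertex of $X$ has degree exactly $\delta$ in the host graph, $\delta(G)\ge\delta$ forces every edge at $X$ to survive, so all deleted edges lie inside the clique $C=Y\cup Z$), the bound $x_v\le\delta/\lambda=O(1/n)$ on $X$-entries, the uniform bound $x_u\ge 1-T$ on clique entries, and the summed eigen-equation $(\lambda-(N-1))S=cT-Q$, which pits the total outgrowth gain $cT=O(1/n)$ directly against the loss $Q\ge 2-o(1)$ incurred by any single deleted clique edge. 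Two remarks. First, the step you flag as ``the main obstacle to make fully rigorous'' is already rigorous as written: from $\lambda\le S-1+T$ and $x_u\le 1$ you get $\sum_{u\in C}(1-x_u)=N-S\le T$, and since every summand is nonnegative, $1-x_u\le T$ holds for each $u$ individually; nothing further is needed. Second, you implicitly read ``subgraph'' as \emph{spanning} subgraph (same vertex set); this is the intended reading --- it is how Theorem~\ref{thm35} is invoked in the proofs of Theorems~\ref{thm21} and~\ref{thm25}, and the statement is simply false for non-spanning subgraphs (deleting a whole vertex of $X$ from $H_{n,k,\delta}$ leaves the clique $C$ intact, so $\lambda\ge n-\delta+k-1$ persists while $\delta(G)\ge\delta$ still holds) --- but it deserves to be said explicitly.
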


\noindent 
{\bf Remark.}  
When $G$ is a subgraph of $L_{n,k,\delta}$, 
this theorem was partially proved 
in  \cite[Theorem 1.6]{LiuDMGT}. 
The remaining case that $G$ is a subgraph of 
$H_{n,k,\delta}$ can be proved similarly, so we leave the details for  interested readers.

\section{Proofs of main results}

\label{sec4}

Recall that the signless Laplacian matrix $Q(G)$ 
 associated with graph $G$ is given as $D+A$, 
 where $D$ is the diagonal matrix of degrees 
 and $A$ is the adjacency matrix of $G$. 
 Let $q(G)$ denote the largest eigenvalue of $Q(G)$. 
 The Rayleigh principle yields 
 \[   q(G)= \max_{\bm{x} \neq \bm{0}} \frac{\bm{x}^TQ(G)\bm{x}}{\bm{x}^T\bm{x}},  \]
 where 
 \[  \bm{x}^TQ(G)\bm{x} 
 = \sum_{v\in V(G)} d(v) x_v^2 + 2 \sum_{\{u,v\}\in E(G)} x_ux_v .  \]
Let $\bm{f}$ be an eigenvector corresponding to 
$q(G)$, i.e., $Q(G)\bm{f}= q(G)\bm{f}$. 
By the celebrated Perron--Frobenius theorem (see \cite[p. 22]{BH2012} 
or \cite[p. 178]{GR2001}), 
we may assume that $f_{v}>0$ for each $v\in V(G)$ 
when $G$ is connected. 
It is easy to see from the eigen-equation that 
for any $u,v\in V(G)$,  
\begin{align*}
(q(G)- d(u)) f_u =\sum_{w\in N(u)} f_w, ~~\text{and}~~
(q(G)- d(v)) f_v =\sum_{z\in N(v)} f_z.
\end{align*}
Therefore, we obtain 
\begin{align}
\notag (q(G) - d(u))(f_u -f_v) &= 
(q(G) - d(u))f_u - (q(G)-d(v)) f_v  + (d(u) - d(v)) f_v \\
\notag &= (d(u) -d(v)) f_v + \sum_{s\in N(u)} f_s - \sum_{t\in N(v)} f_t \\
&=   (d(u) -d(v)) f_v + 
\sum_{s\in N(u)\setminus N(v)} f_s - 
\sum_{t\in N(v)\setminus N(u)} f_t.  \label{eqeq6}
\end{align}

In what follows, we shall spend a lot of efforts to prove some lemmas 
involving the signless Laplacian spectral radius. 
The main techniques used in this section applies the 
analytic method  on the entries of the Perron eigenvector. This technique 
originates from 
the recent work of Li, Liu and Peng \cite{LLPLMA18}. 

\begin{lemma} \label{lem31}
If $G\in \mathcal{H}_{n,k,\delta}^{(1)} $ or $G\in 
\mathcal{L}_{n,k,\delta}^{(1)}$, 
then $G$ is neither  $k$-edge-Hamiltonian  nor  $k$-Hamiltonian. Moreover, we have 
\[  q(G)\ge 2(n-\delta +k -1).  \] 
\end{lemma}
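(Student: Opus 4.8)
The plan is to treat the two assertions separately: first the combinatorial statement that no member of $\mathcal{H}_{n,k,\delta}^{(1)}$ or $\mathcal{L}_{n,k,\delta}^{(1)}$ is $k$-edge-Hamiltonian or $k$-Hamiltonian, and then the spectral lower bound $q(G)\ge 2(n-\delta+k-1)$, which is the substantive part. Throughout I write $m=n-\delta+k$ and recall that $Y\cup Z$ induces a clique $K_{m}$ in both $H_{n,k,\delta}$ and $L_{n,k,\delta}$, that $E_1$ consists only of edges inside this clique, and that $X$ has $\delta-k$ vertices each joined to all of $Y$.

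The key structural observation for the non-Hamiltonicity is that every deleted edge lies inside $Y\cup Z$, so none of the edges incident to $X$ (the $X$--$Y$ join, and, for $L$, the $X$--$X$ edges) is ever removed; hence $Y$ remains a cut separating $X$ from $Z$. For $k$-Hamiltonicity I would delete any $k$ vertices of $Y$: in the $H$-case the remaining $\delta-k$ vertices of $Y$ form a set whose removal isolates the $\delta-k$ vertices of $X$, producing more than $\delta-k$ components and violating the standard necessary condition $c(G-S)\le |S|$ for a Hamilton cycle; in the $L$-case the single surviving vertex of $Y$ is a cut vertex separating the clique $X$ from $Z$. For $k$-edge-Hamiltonicity I would use the necessary condition that \emph{if $G$ is $k$-edge-Hamiltonian and $G[S]$ contains a path with exactly $k$ edges, then $c(G-S)\le |S|-k$}: forcing that path into a Hamilton cycle saturates its $k-1$ internal vertices, so at most $2|S|-2k$ cycle-edges can leave $S$, while each of the $c(G-S)$ components absorbs at least two of them. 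Taking $S=Y$ then yields a contradiction since $c(G-Y)$ exceeds $|Y|-k$.

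For the spectral bound I would apply the Rayleigh principle with the test vector $\bm{x}$ equal to $1$ on $Y\cup Z$ and equal to a constant $t$ on $X$, using the identity $\bm{x}^{T}Q(G)\bm{x}=\sum_{\{u,v\}\in E}(x_u+x_v)^{2}$. The subgraph on $Y\cup Z$ is $K_{m}$ minus the $|E'|$ deleted edges, contributing $4\bigl(\binom{m}{2}-|E'|\bigr)$, while the $X$--$Y$ join (of size $\delta(\delta-k)$ for $H$, and $(k+1)(\delta-k)$ for $L$) together with the $X$--$X$ edges supplies the positive terms depending on $t$. Subtracting $2(m-1)\,\bm{x}^{T}\bm{x}$, the numerator becomes $-4|E'|$ plus a concave quadratic in $t$ whose maximum, at the optimal $t$, slightly exceeds $\delta(\delta-k)$ (respectively $(k+1)(\delta-k)$) by a term of order $1/m$. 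Since $4|E'|\le 4\lfloor \delta(\delta-k)/4\rfloor\le \delta(\delta-k)$ (and analogously for $L$), this quantity is positive, giving $q(G)\ge 2(m-1)=2(n-\delta+k-1)$. This computation also explains why the admissible ranges are exactly $\lfloor \delta(\delta-k)/4\rfloor$ and $\lfloor (k+1)(\delta-k)/4\rfloor$: they are precisely the largest deletion counts that the gain from the pendant set $X$ can absorb.

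The step I expect to be the main obstacle is the robustness of the combinatorial obstruction under the edge deletions. The $k$-Hamiltonicity claim is safe because the cut $Y$ is never weakened, but the edge-Hamiltonicity claim requires that a path with $k$ edges actually survives inside $G[Y]$ after removing the allowed edges; this is delicate for $\mathcal{L}_{n,k,\delta}^{(1)}$, where $|Y|=k+1$ is exactly the number of vertices such a path needs, so one must verify (for instance via the Erd\H{o}s--Gallai bound, which guarantees a path of $k$ edges once the surviving edge count in $G[Y]$ exceeds $\tfrac{(k-1)}{2}|Y|$) that enough edges of $Y$ remain. The spectral estimate itself is routine once the test vector is fixed, the only care being to retain the lower-order $t$-terms so that the inequality still holds at the extreme value $|E'|=\lfloor\cdot/4\rfloor$.
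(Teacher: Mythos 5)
Your spectral argument is correct and coincides in substance with the paper's: the paper uses precisely your test vector with $t=0$, namely the Perron vector $\bm{h}$ of $K_{n-\delta+k}\cup I_{\delta-k}$ (equal to $1$ on $Y\cup Z$ and $0$ on $X$), computes $\bm{h}^TQ(G)\bm{h}-\bm{h}^TQ(K_{n-\delta+k}\cup I_{\delta-k})\bm{h}=\delta(\delta-k)-4|E'|\ge 0$ (resp.\ $(k+1)(\delta-k)-4|E'|\ge 0$), and concludes by the Rayleigh principle; optimizing over $t$ buys only an $O(1/n)$ improvement that is not needed. Your $k$-Hamiltonicity argument is also correct, and robust for exactly the reason you give: the deleted edges never touch the $X$--$Y$ join, so after removing $k$ vertices of $Y$ the rest of $Y$ is a cutset whose removal leaves more components than its size (the $H$-case), or a cut vertex (the $L$-case). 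Note that the paper offers no argument at all for the combinatorial assertions --- its proof says only that ``we can clearly see that $G$ is not $k$-edge-Hamiltonian'' --- so on this part your write-up is already more substantial than the source.

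The step you flagged as delicate is, however, a genuine gap, and it cannot be repaired, because it is exactly where the lemma itself fails. For $\mathcal{L}_{n,k,\delta}^{(1)}$ your method needs a path with $k$ edges surviving in $G[Y]$, where $|Y|=k+1$. Erd\H{o}s--Gallai requires more than $(k-1)(k+1)/2$ surviving edges there, i.e.\ fewer than $(k+1)/2$ deletions inside $Y$, whereas the family permits $|E'|=\lfloor(k+1)(\delta-k)/4\rfloor$ deletions, all of which may lie inside $Y$; the requirement thus amounts to $\delta-k<2$, contradicting the standing assumption $\delta\ge k+2$ (it already fails for $k=1$, $\delta=3$). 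Moreover, no substitute argument exists: take $k=1$, any $\delta\ge 3$, and let $E'$ consist of the single edge inside $Y=K_2$ (allowed, since $\lfloor 2(\delta-1)/4\rfloor\ge 1$). The resulting graph is $G=I_2\vee\bigl(K_{n-\delta-1}\cup K_{\delta-1}\bigr)$, and every edge of $G$ lies on a Hamilton cycle of the form $y_1\,P_X\,y_2\,P_Z\,y_1$, where $P_X$ and $P_Z$ are Hamilton paths of the cliques $X$ and $Z$ chosen suitably (through the prescribed edge if it lies inside $X$ or $Z$, or starting at its endpoint if it meets $Y$). Hence this $G$ is $1$-edge-Hamiltonian even though it belongs to $\mathcal{L}_{n,1,\delta}^{(1)}$; it is still not $1$-Hamiltonian, so the $k$-Hamiltonian half of the lemma, and your proof of it, are unaffected.

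In the $H$-case your approach does close: there $|Y|=\delta$, and the surviving edge count in $G[Y]$ is at least $\binom{\delta}{2}-\lfloor\delta(\delta-k)/4\rfloor\ge \delta(\delta+k-2)/4>(k-1)\delta/2$ whenever $\delta>k$, so Erd\H{o}s--Gallai applies and your cut criterion $c(G-Y)>|Y|-k$ finishes the proof. So your proposal is complete for the spectral bound, for $k$-Hamiltonicity of both families, and for $k$-edge-Hamiltonicity of $\mathcal{H}_{n,k,\delta}^{(1)}$; the remaining claim --- that every member of $\mathcal{L}_{n,k,\delta}^{(1)}$ fails to be $k$-edge-Hamiltonian --- is false as stated, and becomes true (with your argument going through verbatim) only if $E'$ is forbidden from containing edges inside $Y$, i.e.\ if $E_1(L_{n,k,\delta})$ is redefined as the set of edges having at least one endpoint in $Z$.
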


\begin{proof}
For each graph $G\in \mathcal{H}_{n,k,\delta}^{(1)} $ 
or $G\in 
\mathcal{L}_{n,k,\delta}^{(1)}$, 
we can clearly see that 
$G$ is not $k$-edge-Hamiltonian. 
Next we shall prove that  
$q(G)\ge 2(n-\delta +k -1)$. 
Recall the subsets 
$X,Y$ and $Z$ defined as above. 
For each case, we define a vector $\bm{h}$ such that 
$h_v =1$ for every $v\in Y\cup Z$ and 
$h_v=0$ for every $v\in X$. 
Note that $q(K_{n-\delta +k} \cup I_{\delta - k} ) =q(K_{n-\delta +k }) 
=2(n-\delta +k-1)$ 
and $\bm{h}$ is the corresponding eigenvector. 
If $G\in  \mathcal{H}_{n,k,\delta}^{(1)}$, then we get 
\[  \bm{h}^TQ(G)\bm{h} - \bm{h}^TQ ({K_{n-\delta +k} \cup I_{\delta - k}}) \bm{h} 
= \delta (\delta -k) - 4|E'| \ge 0. \]
By the Rayleigh Formula, we have 
\[ q(G) 
 \ge \frac{ \bm{h}^TQ (G)\bm{h} }{ \bm{h}^T\bm{h}  }  
 \ge \frac{\bm{h}^TQ ({K_{n-\delta +k} \cup I_{\delta -k}} )\bm{h}}{\bm{h}^T\bm{h}} 
 =2(n-\delta +k-1). 
\] 
Similarly, we can show that 
$q(G) \ge 2(n-\delta +k-1)$ for every $G\in  \mathcal{L}_{n,k,\delta}^{(1)}$. 
\end{proof}

We give the definitions of two families of graphs. 
\[ \mathcal{H}_{n,k,\delta}^{(2)} = 
\left\{ H_{n,k,\delta} \setminus E': E' \subseteq E_1(H_{n,k,\delta})
 ~\text{with}~|E'|\ge  
\lfloor {\delta (\delta -k)}/{4}\rfloor +1 \right\}, \]
and 
\[ \mathcal{L}_{n,k,\delta}^{(2)} = 
\left\{ L_{n,k,\delta} \setminus E': E' \subseteq E_1(L_{n,k,\delta})
 ~\text{with}~|E'|\ge 
\lfloor {(k+1) (\delta -k)}/{4}\rfloor +1 \right\}. \]

\begin{lemma} \label{lem32}
If $n$ is sufficiently large and 
$G\in \mathcal{H}_{n,k,\delta}^{(2)} $ or $G\in 
\mathcal{L}_{n,k,\delta}^{(2)}$, 
then  
\[  q(G)< 2(n-\delta +k -1).  \]
\end{lemma}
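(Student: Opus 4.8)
The plan is to argue by contradiction: assuming $q(G)\ge 2(n-\delta+k-1)$, I will compare the Rayleigh quotient of $G$ with that of the disjoint union $G_0:=K_{n-\delta+k}\cup I_{\delta-k}$ (for $G\in\mathcal{H}_{n,k,\delta}^{(2)}$; one uses $G_0:=K_{n-\delta+k}\cup K_{\delta-k}$ for $G\in\mathcal{L}_{n,k,\delta}^{(2)}$), for which $q(G_0)=2(n-\delta+k-1)$ as recorded in the proof of Lemma \ref{lem31}. First I would make a monotonicity reduction. Adding a single edge $\{u,v\}$ increases the signless Laplacian quadratic form by $(x_u+x_v)^2\ge 0$, so $q$ does not decrease under edge addition; hence for any $E'\subseteq E_1(H_{n,k,\delta})$ with $|E'|\ge\lfloor\delta(\delta-k)/4\rfloor+1$ and any $E''\subseteq E'$ with $|E''|=\lfloor\delta(\delta-k)/4\rfloor+1$ we have $H_{n,k,\delta}\setminus E'\subseteq H_{n,k,\delta}\setminus E''$, so $q(H_{n,k,\delta}\setminus E')\le q(H_{n,k,\delta}\setminus E'')$. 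It therefore suffices to treat the threshold value, and in particular we may assume $|E'|$ is a constant independent of $n$, so each vertex of $W:=Y\cup Z$ loses only $O(1)$ incident edges and the graph stays connected for $n$ large.

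Next comes the comparison step. Let $\bm{f}>0$ be the Perron eigenvector of $Q(G)$ with $\bm{f}^T\bm{f}=1$, and write $E_{YX}$ for the set of $\delta(\delta-k)$ edges joining $Y$ to $X$ in $H_{n,k,\delta}$. Using the identity $\bm{x}^TQ(G)\bm{x}=\sum_v d(v)x_v^2+2\sum_{\{u,v\}\in E(G)}x_ux_v$, each edge $\{u,v\}$ contributes exactly $(f_u+f_v)^2$ to the quadratic form; since $E(G)\setminus E(G_0)=E_{YX}$ and $E(G_0)\setminus E(G)=E'$, and since $\bm{f}^TQ(G_0)\bm{f}\le q(G_0)=2(n-\delta+k-1)$ by the Rayleigh principle, I obtain
\[ q(G)=\bm{f}^TQ(G)\bm{f}\le 2(n-\delta+k-1)+\sum_{\{u,v\}\in E_{YX}}(f_u+f_v)^2-\sum_{\{u,v\}\in E'}(f_u+f_v)^2. \]
Thus it is enough to prove the strict inequality $\sum_{E'}(f_u+f_v)^2>\sum_{E_{YX}}(f_u+f_v)^2$, which immediately yields $q(G)<2(n-\delta+k-1)$, contradicting our assumption.

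The heart of the argument is then the eigenvector estimate, carried out exactly in the analytic spirit of \cite{LLPLMA18} via the entrywise eigen-equations of Section \ref{sec4}. Combining $2(n-\delta+k-1)\le q(G)\le 2(n-1)$ with $(q(G)-\delta)f_v=\sum_{w\in Y}f_w$ for $v\in X$ shows the $X$-entries are negligible, $f_v=O(f_{\max}/n)$; and writing $(q(G)-d_G(u)+1)f_u=\sum_{w\in W}f_w-\rho_u+\varepsilon_u$ for $u\in W$, with $\rho_u=O(1)$ the contribution of the $O(1)$ deleted neighbours and $\varepsilon_u=O(1/n)$ that of $X$, against a denominator of size $\Theta(n)$, shows that every $W$-entry equals $f_{\max}(1+O(1/n))$. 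Normalizing $f_{\max}=1$, the $E'$-edges (both endpoints in $W$) give $(f_u+f_v)^2\ge 4(1-O(1/n))$, while the $E_{YX}$-edges (one endpoint in $X$) give $(f_u+f_v)^2\le 1+O(1/n)$. The arithmetic gain from the floor function, namely $4|E'|\ge\delta(\delta-k)+1>\delta(\delta-k)=|E_{YX}|$, then produces a surplus $\sum_{E'}(f_u+f_v)^2-\sum_{E_{YX}}(f_u+f_v)^2\ge 1-O(1/n)>0$ for $n$ large.

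I expect the main obstacle to be controlling the error terms uniformly: the surplus supplied by the strict inequality $4|E'|>\delta(\delta-k)$ is only a fixed positive constant, so I must verify carefully that the accumulated $O(1/n)$ discrepancies—from the near-flatness of the $W$-entries and the smallness of the $X$-entries—are genuinely dominated once $n$ is large relative to $\delta$ and $k$, which is exactly where the ``$n$ sufficiently large'' hypothesis is consumed. The $\mathcal{L}_{n,k,\delta}^{(2)}$ case is handled verbatim with $|E_{YX}|=(k+1)(\delta-k)$; the only additional check is that the clique edges inside $X$ do not inflate its entries, which follows since $(q(G)-2\delta+k+1)\max_{v\in X}f_v\le(k+1)f_{\max}$ again forces $\max_{v\in X}f_v=O(1/n)$.
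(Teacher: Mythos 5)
Your proposal is correct, and its overall skeleton is the one the paper uses: reduce by monotonicity of $q$ under edge addition to the case $|E'|=\lfloor\delta(\delta-k)/4\rfloor+1$, compare the quadratic form of $G$ edge-by-edge with that of $K_{n-\delta+k}\cup I_{\delta-k}$ (resp.\ $K_{n-\delta+k}\cup K_{\delta-k}$), show the Perron entries are $O(1/n)$ on $X$ and $1-O(1/n)$ on $Y\cup Z$, and close with the arithmetic fact $4|E'|\ge\delta(\delta-k)+1$. Where you genuinely depart from the paper is the middle step. The paper first proves an unconditional lower bound $q(G)>2(n-\delta+k-1)-1$ (its Claim 1), then chooses $G$ extremal in the family and \emph{orders} the eigenvector entries across the classes $Y_1,Y_2,Z_1,Z_2$ (its Claim 3) via the Hong--Zhang rotation Lemma \ref{lemhz} --- this rotation step is precisely where the extremal choice of $G$ is consumed --- and only then deduces $f_w\ge 1-o(1)$ for $w\in Y\cup Z$ by a two-case analysis (its Claim 4) resting on the difference identity (\ref{eqeq6}). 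You instead take the lower bound on $q(G)$ from the contradiction hypothesis and obtain the flatness directly, by comparing the eigen-equation at an arbitrary $w\in Y\cup Z$ with the eigen-equation at the maximizing vertex, using that each vertex of $Y\cup Z$ loses only $O(1)$ neighbours and that degrees inside $Y\cup Z$ differ by $O(1)$. This bypasses Lemma \ref{lemhz}, the ordering claims, and the case distinction entirely, and it never needs $G$ to be extremal (only the threshold reduction). What the paper's route buys is finer structural information (the monotone ordering of entries by degree class); what yours buys is brevity and independence from the rotation machinery.

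Two points you should make explicit when writing this up. First, your ratio argument needs $\sum_{w\in Y\cup Z}f_w=\Theta(n f_{\max})$; this follows by reading the eigen-equation at the maximizing vertex $u^*$, which gives $\sum_{w\in Y\cup Z}f_w\ge (q(G)-n+2)f_{\max}-O(f_{\max}/n)$, and then invoking $q(G)-n+2\ge n-2\delta+2k$ from the contradiction hypothesis; without this, ``numerator and denominator each move by $O(1)$'' does not by itself yield the multiplicative $1+O(1/n)$. Second, you switch between the normalizations $\bm{f}^T\bm{f}=1$ and $f_{\max}=1$; this is harmless because the target inequality $\sum_{\{u,v\}\in E'}(f_u+f_v)^2>\sum_{\{u,v\}\in E_{YX}}(f_u+f_v)^2$ is homogeneous of degree two in $\bm{f}$, but say so.
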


\begin{proof} 
In the following proof, we shall assume that 
$G\in \mathcal{H}_{n,k,\delta}^{(2)}$. 
Since the proof for the case of $G\in \mathcal{L}_{n,k,\delta}^{(2)}$ is  similar, we only give the sketch in this case. 
Let $G$ be a graph from $\mathcal{H}_{n,k,\delta}^{(2)}$ 
with maximum signless Laplacian spectral radius. 
This means that $G$ is obtained from $H_{n,k,\delta}$ 
by deleting exactly $\lfloor {\delta (\delta -k)}/{4}\rfloor +1$ 
edges from $E_1(H_{n,k,\delta})$ 
as the monotonicity of the signless 
Laplacian spectral radius. 
Let $\bm{f}$ be the eigenvector corresponding to $q(G)$. 
Furthermore, we assume that 
$\max_{v\in V(G)} f_v=1$.

Let $\bm{h}$ be the vector defined as in the proof 
of Lemma \ref{lem31}. 
First of all, we can show the following claim, 
which is a lower bound on $q(G)$.  

\medskip 
\noindent 
{\bf Claim 1.} \label{claim1} $q(G)>2(n-\delta +k -1) -1$.

\begin{proof}[Proof of Claim 1]
If $G\in  \mathcal{H}_{n,k,\delta}^{(2)}$, then we obtain 
\[  \bm{h}^TQ(G)\bm{h} - \bm{h}^TQ ({K_{n-\delta +k} \cup I_{\delta - k}}) \bm{h} 
= \delta (\delta -k) - 4|E'| \ge -4. \]
By the Rayleigh Formula, we have 
\[ q(G) 
 \ge \frac{ \bm{h}^TQ (G)\bm{h} }{ \bm{h}^T\bm{h}  }  
 \ge \frac{\bm{h}^TQ ({K_{n-\delta +k} \cup I_{\delta -k}} )\bm{h}}{\bm{h}^T\bm{h}} - \frac{4}{\bm{h}^T\bm{h}}
 =2(n-\delta +k-1) - \frac{4}{\bm{h}^T\bm{h}}, 
\] 
which implies that 
$q(G) \ge 2(n-\delta +k-1) -1$. 
\end{proof} 

Recall that 
\begin{align*}
X&= \{v\in V(H_{n,k,\delta}): d(v)=\delta\}, \\
Y&=\{v\in V(H_{n,k,\delta}): d(v)=n-1 \}, \\
Z&= \{v\in V(H_{n,k,\delta}): d(v)= n - \delta +k -1 \}. 
\end{align*}

We next show that all entries of $\bm{f}$ corresponding to 
the vertices of $X$ are tiny (or small) since 
$q(G)> 2n -2 \delta +2k -3$ by 
 Claim 1, and 
the condition that $n$ is large enough.  

\medskip 
\noindent 
{\bf Claim 2.} For each $x\in X$, we have 
\[  f_x \le \frac{\delta}{q(G)-\delta} = o(n). \]

\begin{proof}[Proof of Claim 2]
The following equality 
\[  (q(G) - d(x))  f_x = \sum_{y\in Y} f_y, \]
 together with $d(x)=\delta $, yields the required inequality. 
\end{proof}

Note that $E'$ is the edge set in which both 
endpoints are in $Y\cup Z$, 
we define two subsets of $Y$ and $Z$, respectively. 
\[  Y_1= \{y\in Y: d(y)=n-1\} ~\text{and}~ 
Y_2 = \{ y\in Y : d(y) \le n-2\}.  \]
Similarly, we define two subsets of $Z$ as follows. 
\[  Z_1=\{z\in Z : d(v)=n- \delta +k -1\} ~\text{and}~ 
Z_2= \{z\in Z: d(v)\le n - \delta +k -2\} . \]
Note that  the number of  edges removed from  $Y\cup Z$ 
is exactly $\lfloor \delta (\delta -k)/4\rfloor +1$.  
Since $|Z|=n-2\delta +k$ and $n$ is sufficiently large, 
we get  $Z_1\neq \varnothing$.   
We next compare the entries of eigenvector $\bm{f}$ 
corresponding to these subsets. 
Roughly speaking, the vertex with  large degree 
 always has large value at the corresponding 
 entry.

\medskip 
\noindent 
{\bf Claim 3.} We have the following statements. \\ 
(a) If $Y_2\neq \emptyset $,  
then $f_{y_2}<f_{z_1}$ 
for all $y_2\in Y_2$ and $z_1\in Z_1$.  \\ 
(b) If $Z_2\neq \emptyset $,  
then $f_{z_2}<f_{z_1}$ 
for all $z_2\in Z_2$ and $z_1\in Z_1$.  \\
(c) If $Y_1,Y_2\neq \emptyset $,  
then $f_{y_2}<f_{y_1}$ 
for all $y_2\in Y_2$ and $y_1\in Y_1$.  \\
(d) If $Y_1\neq \emptyset $,  
then $f_{z_1}<f_{y_1}$ 
for all $z_1\in Z_1$ and $y_1\in Y_1$. 

\begin{proof}[Proof of Claim 3]
(a) Suppose on the contrary that there exist some 
$y_2\in Y_2$ and $z_1\in Z_1$ such that 
$f_{y_2} \ge f_{z_1}$. 
  Let $w\in Z_2\cup Y_2$ be a vertex not adjacent to $y_2$.  
Since $z_1\in Z_1$, we have $\{z_1,y_2\}\in E(G)$ 
and $\{z_1,w\}\in E(G)$.  
We define a new graph $G^*\in \mathcal{H}_{n,k,\delta}^{(2)}$ by deleting $\{z_1,w\}$ and adding $\{y_2,w\}$. 
By Lemma \ref{lemhz}, 
we get $q(G^*)> q(G)$, which contradicts with 
the choice of $G$. Thus we have $f_{y_2}<f_{z_1}$ 
for all $y_2\in Y_2$ and $z_1\in Z_1$.


(b) For $z_2\in Z_2$ and $z_1\in Z_1$, 
we have
$\{z_1,z_2\}\in E(G)$.  
So $z_2\in N(z_1) \setminus N(z_2)$ and 
$z_1\in N(z_2) \setminus N(z_1)$. 
We denote $N[z_2]=N(z_2) \cup \{z_2\}$ and $N[z_1]=N(z_1) \cup \{z_1\}$. 
We can get from (\ref{eqeq6}) that 
\[ 
(q(G) - d(z_1) +1 )(f_{z_1} -f_{z_2}) 
=  (d(z_1) -d(z_2)) f_{z_2} + 
\sum_{s\in N(z_1)\setminus N[z_2]} f_s - 
\sum_{t\in N(z_2)\setminus N[z_1]} f_t. \] 
Since $N(z_2) \setminus \{z_1\} \subseteq N(z_1)\setminus \{z_2\}$, we then get 
\begin{equation} \label{eq+1}
 (q(G) - d(z_1) +1 )(f_{z_1} -f_{z_2}) 
=  (d(z_1) -d(z_2)) f_{z_2} + 
\sum_{s\in N(z_1)\setminus N[z_2]} f_s >0, 
\end{equation}
which implies $f_{z_1} >f_{z_2} $ since 
$q(G)\ge 2(n-\delta +k-1) -1 > d(z_1) -1$ and $d(z_1)>d(z_2)$. 
The proofs of (c) and (d) are similar 
with that of (a) and (b), so we omit the details. 
\end{proof}

\medskip 
\noindent 
{\bf Claim 4.} For each $w\in Y\cup Z$, we have 
\[ f_w \ge 1- 
\frac{(\delta +2)(\delta -k) +6}{2(q(G)-n+2)} = 1- o(n).\]   

\begin{proof}[Proof of Claim 4]
By Claim 3, it is sufficient to prove this claim 
in the case $w\in Y_2\cup Z_2$ whenever $Y_2\neq \emptyset$ or $Z_2 \neq \emptyset$. 
Since $Z_1$ is non-empty, by Claim 3 again, we know that 
$\max_{v\in V(G)} f_v$ is attained by vertices from 
$Y_1$ or $Z_1$. We next proceed in two cases. 

{\bf Case 1.} $Y_1= \emptyset$. 
By Claim 3, we get that $\max_{v\in V(G)} f_v$ is attained by vertices from  $Z_1$. 
Choose a vertex $z_1$  from $Z_1$ with $f_{z_1}=1$.  
We notice that $z_1$ is adjacent to all other vertices in 
$Y\cup Z$. 

{\bf Subcase 1.1.} If $w\in Z_2$, then $\{z_1,w\}\in E(G)$ and 
$N(w) \subseteq N(z_1)$, which yields 
$d(z_1)-d(w) \le \lfloor \delta (\delta -k) /4\rfloor +1$ and 
$|N(z_1) \setminus N[w]| \le  \lfloor \delta (\delta -k) /4\rfloor +1$. 
By applying (\ref{eq+1}), we have 
\[  (q(G) - d(z_1) +1 )(f_{z_1} -f_{w}) 
=  (d(z_1) -d(w)) f_{w} + 
\sum_{s\in N(z_1)\setminus N[w]} f_s \le 
\frac{\delta (\delta -k)}{2} +2 . \]
Note that $d(z_1)= n - \delta +k -1$ and $f_{z_1}=1$, 
we obtain 
\[  1- f_{w} \le \frac{\delta (\delta -k) +4}{2
(q(G) - n + \delta -k+2)}. \]

{\bf Subcase 1.2.} If $w\in Y_2$, then 
$ |d(z_1) - d(w)| \le  \frac{\delta (\delta -k)}{4}  + 1+
(\delta -k)$.  
Moreover, we have 
\[  |N(z_1)\setminus N[w]| \le \frac{\delta (\delta -k)}{4} +1,\quad  |N(w)\setminus N[z_1]|  = |X|=\delta -k.  \]
Applying (\ref{eq+1}), we get 
\begin{align*}
(q(G) - d(z_1) +1) (f_{z_1} -f_w) &= 
(d(z_1) - d(w)) f_w  +
\sum_{s\in N(z_1)\setminus N[w]} f_s - 
\sum_{t\in N(w)\setminus N[z_1]} f_t \\
& \le \frac{\delta (\delta -k)}{4} +1+ (\delta -k) + 
\frac{\delta (\delta -k)}{4} +1. 
\end{align*}
Note that $d(z_1)= n - \delta +k -1$ and $f_{z_1}=1$, 
we obtain 
\[ 1-f_w \le  \frac{(\delta+2)(\delta -k) +4}{2(q(G) - n + \delta -k+2)}.\]

{\bf Case 2.} $Y_1 \neq \emptyset$. 
By Claim 3, we get that $\max_{v\in V(G)} f_v$ is attained by vertices from  $Y_1$. 
Let $y_1$ be a vertex from $Y_1$ such that $f_{y_1}=1$. 
By repeating the argument in Case 1, we can prove that 
\[  1 - f_w \le \frac{(\delta +2)(\delta -k) +6}{2(q(G)-n+2)} . \] 
The proof  of Claim 4 is completed. 
\end{proof}

Claims 2 and 4 showed that 
the Perron eigenvector $\bm{f}$ has small values on 
the entries of $X$ and large values on the entries 
of $Y\cup Z$. This reveals that  vector $\bm{f}$ is entrywise  close to  $\bm{h}$  defined as in the proof of 
Lemma \ref{lem31}.

We  are now ready to prove Lemma \ref{lem32} completely. 
By Claims 2 and 4, we obtain  
\begin{align*}
& \bm{f}^TQ(G)\bm{f} - \bm{f}^TQ ({K_{n-\delta +k} \cup I_{\delta - k}}) \bm{f} \\ 
& = \sum_{x\in X,y\in Y} (f_x + f_y)^2 - 
\sum_{\{u,v\}\in E'} (f_u + f_v)^2 \\
& \le \delta (\delta -k) \left( \frac{\delta}{ q(G)- \delta} +1 \right)^2 
-4 |E'| \left( 1- 
\frac{(\delta -2)(\delta -k) +6}{2(q(G)- n +2)}  \right)^2 \\
&<0,
\end{align*} 
where the last inequality holds 
by using $q(G)>2(n-\delta +k -1) -1$ in Claim 1  
and $|E'|=\lfloor \frac{\delta (\delta -k )}{4} \rfloor +1 > 
\frac{\delta (\delta -k)}{4}$, 
then $\frac{\delta (\delta -k)}{4|E'|} < \frac{1- o(1)}{1+ o(1)}$ holds for
sufficiently large  $n$. 
Hence, we have 
\[ q(G)=\frac{\bm{f}^TQ(G)\bm{f}}{\bm{f}^T\bm{f}}
 < \frac{\bm{f}^TQ ({K_{n-\delta +k} \cup I_{\delta - k}}) \bm{f}
 }{\bm{f}^T\bm{f}} \le 2(n-\delta +k -1). \]
This completes the proof. 
\end{proof}

With the help of these Lemmas, 
we can prove our theorems immediately.

\begin{proof}[{\bf Proof of Theorems \ref{thm21} and \ref{thm25}}]
By Theorem \ref{thm31}, we can get 
\[  n- \delta + k-1 \le 
\lambda (G) \le \frac{1}{2}\Bigl( \delta-1 + 
\sqrt{8e(G) - 4\delta n + (\delta+1)^2} \Bigr).   \]
By calculation, and noticing that $n$ is large enough, 
 we have 
\begin{align*} 
e(G)&\ge  \frac{n^2 - (2\delta -2k +1)n 
+ 2\delta^2 -3\delta k + \delta +k^2-k}{2} \\
& > \tbinom{n -\delta -1 +k }{ 2} + 
(\delta +1)(\delta +1-k) \\
&= e(H_{n,k,\delta +1}). 
\end{align*}
By Theorems \ref{thmFKL} and \ref{thm34}, we know that 
$G$ is $k$-edge-Hamiltonian and 
$k$-Hamiltonian unless 
$G\subseteq H_{n,k,\delta}$ or $G\subseteq L_{n,k,\delta}$.  Note that $\delta (G)\ge \delta$ 
and $\lambda (G) \ge n- \delta + k-1$. 
By Theorem \ref{thm35},  we get 
$G=H_{n,k,\delta}$ or $G=L_{n,k,\delta}$. 
\end{proof}

\begin{proof}[{\bf Proof of Theorems \ref{thm23} and 
\ref{thm27}}]
By applying Theorem \ref{thmFY}, we obtain 
\[ 2(n- \delta +k -1) \le q(G) \le \frac{2e(G)}{n-1} + n-2. \] 
Since $n$  is sufficiently large, we  have 
\[  e(G) \ge \frac{n^2- (2\delta -2k +1)n +2\delta -2k}{2} 
> e(H_{n,k,\delta +1}).  \]
By Theorems \ref{thmFKL} and \ref{thm34}, we get that 
$G$ is $k$-edge-Hamiltonian and $k$-Hamiltonian unless 
$G\subseteq H_{n,k,\delta}$ or $G\subseteq L_{n,k,\delta}$. Note that $\delta (G)\ge \delta$ and $q(G) \ge 2(n-\delta +k-1)$. 
By Lemma  \ref{lem32}, 
we know that $G\in \mathcal{H}_{n,k,\delta}^{(1)} $ or $G\in 
\mathcal{L}_{n,k,\delta}^{(1)}$. 
\end{proof}

\section{Concluding remarks}

\label{sec5}

Let $G$ be a bipartite graph with vertex sets $X$ and $Y$. 
The bipartite graph $G$ is called balanced if $|X|=|Y|$. 
If  a bipartite graph has Hamilton cycle, 
then it must be balanced. 
So we  consider the existence of Hamilton cycle only 
in balanced bipartite graphs.

Motivated by the work of Erd\H{o}s \cite{erdos62} 
in Theorem \ref{thmerdos62}, 
Moon and Moser \cite{MM63} provided a corresponding result 
for the balanced bipartite graphs.

\begin{theorem}[Moon--Moser \cite{MM63}] \label{thmmm}
Let $G$ be a balanced bipartite graph on $2n$ vertices.  
If the minimum degree $\delta (G)\ge \delta $ for some $1\le \delta \le n/2$ and 
\[  e(G) >  n(n-\delta) + \delta^2, \]
then $G$ has a Hamilton cycle. 
\end{theorem}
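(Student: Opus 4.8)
The plan is to deduce the result from the bipartite analogue of the Bondy--Chv\'{a}tal closure, namely the Ore-type degree-sum condition of Moon and Moser: a balanced bipartite graph on parts $X,Y$ of size $n$ in which every non-adjacent pair $x\in X$, $y\in Y$ satisfies $d(x)+d(y)\ge n+1$ is Hamiltonian. Equivalently, forming the bipartite $(n+1)$-closure $G'$, obtained by repeatedly joining non-adjacent cross pairs whose degree sum is at least $n+1$, does not affect Hamiltonicity, in the same spirit as Theorem \ref{thm33}. Suppose, for contradiction, that $G$ has no Hamilton cycle. Then $G'$ is also non-Hamiltonian, so $G'\ne K_{n,n}$, and hence there is a non-adjacent pair $x_0\in X$, $y_0\in Y$ with $d_{G'}(x_0)+d_{G'}(y_0)\le n$. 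Passing to the closure only adds edges, so $\delta(G')\ge\delta$ and $e(G')\ge e(G)$, and it suffices to bound $e(G')$ from above.

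Next I would count the edges of $G'$ by summing degrees over $X$ and exploiting the closure property on the non-neighbours of $y_0$. Write $b=d_{G'}(y_0)$. The vertex $y_0$ has $n-b$ non-neighbours in $X$, and each such $x$ is non-adjacent to $y_0$ in the closure, so $d_{G'}(x)+b\le n$, that is $d_{G'}(x)\le n-b$. The remaining $b$ neighbours of $y_0$ have degree at most $n$. Therefore
\[ e(G')=\sum_{x\in X}d_{G'}(x)\le b\cdot n+(n-b)(n-b)=n^2-bn+b^2. \]

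Finally I would check that this upper bound never exceeds the extremal value. Since $\delta\le d_{G'}(y_0)=b$, and using $d_{G'}(x_0)\ge\delta$ together with $d_{G'}(x_0)+b\le n$ also $b\le n-\delta$, we have $\delta\le b\le n-\delta$. A direct computation gives
\[ n(n-\delta)+\delta^2-\bigl(n^2-bn+b^2\bigr)=(b-\delta)(n-b-\delta)\ge 0, \]
because both factors are nonnegative in the range $\delta\le b\le n-\delta$. Combining the two displays yields $e(G)\le e(G')\le n^2-bn+b^2\le n(n-\delta)+\delta^2$, which contradicts the hypothesis $e(G)>n(n-\delta)+\delta^2$. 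Hence $G$ must be Hamiltonian.

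The only genuinely non-elementary ingredient is the bipartite closure lemma (the Moon--Moser / Ore-type degree-sum theorem); once it is available, the remainder is a short degree-counting argument, and the main point to get right is to read off the cross pair $(x_0,y_0)$ so that the relevant degree $b$ lands in the window $[\delta,\,n-\delta]$ where the quadratic bound is controlled. The equality cases $b=\delta$ and $b=n-\delta$ of the factorisation identify the extremal configuration and show that the bound $n(n-\delta)+\delta^2$ is best possible: it is attained by the construction in which a fixed set of $\delta$ vertices of $Y$ is joined only to a fixed set of $\delta$ vertices of $X$ while every remaining cross pair is adjacent, a graph which has minimum degree $\delta$ (here the hypothesis $\delta\le n/2$ enters) and no Hamilton cycle.
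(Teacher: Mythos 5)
Your proof is correct. There is, however, no in-paper argument to compare it against: Theorem \ref{thmmm} is quoted in the concluding remarks as a known result of Moon and Moser \cite{MM63}, stated without proof. Your route is the classical one, and it is also precisely the strategy the paper itself uses for its stability result, Theorem \ref{thm34}: pass to a closure whose completeness would force the desired Hamiltonicity (here the bipartite $(n+1)$-closure, there the $(n+k)$-closure of Theorem \ref{thm33}), extract from non-Hamiltonicity a non-adjacent pair of small degree sum, and derive an edge-count contradiction. Your computations check out: with $b=d_{G'}(y_0)$, the split of $X$ into the $b$ neighbours (degree at most $n$) and the $n-b$ non-neighbours (degree at most $n-b$ by the closure property) gives $e(G')\le bn+(n-b)^2=n^2-bn+b^2$, and the identity $n(n-\delta)+\delta^2-(n^2-bn+b^2)=(b-\delta)(n-b-\delta)$ is nonnegative exactly because $b\ge\delta(G')\ge\delta$ and $b\le n-d_{G'}(x_0)\le n-\delta$; the hypothesis $e(G)>n(n-\delta)+\delta^2$ then contradicts $e(G)\le e(G')$. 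The only external ingredient, the bipartite closure lemma (equivalently the Moon--Moser Ore-type degree-sum theorem), is standard and is the bipartite analogue of the closure results the paper invokes in Theorem \ref{thm33}. Your sharpness example is exactly the graph $B_{n,\delta}$ the paper describes right after the theorem, and you correctly locate where the hypothesis $\delta\le n/2$ is needed, namely to make that example have minimum degree $\delta$ rather than $n-\delta$.
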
 

The condition $ \delta \le {n}/{2}$ is well natural  
since Moon and Moser \cite{MM63} also pointed out that 
if $G$ is a balanced bipartite on $2n$ vertices with $\delta (G) > {n}/{2}$, 
then $G$ must be Hamiltonian. 
This is a bipartite version of the Dirac theorem.

Let  $B_{n,\delta}$ be the bipartite graph  
obtained from the complete bipartite graph $K_{n,n}$ 
by deleting all edges in its one subgraph $K_{\delta,n-\delta}$. 
More precisely, the two vertex parts of $B_{n,\delta}$ 
are $V=V_1\cup V_2$ and $U=U_1 \cup U_2$ 
where $|V_1|=|U_1|=\delta$ and $|V_2|=|U_2|=n-\delta$, 
we join all edges between $V_1$ and $U_1$, and all edges between 
$V_2$ and $U$. It is easy to see that 
$e(B_{n,\delta})= n(n-\delta) + \delta^2$ and $B_{n,\delta}$ 
contains no Hamilton cycle. This implies that 
 the condition in Moon--Moser's theorem is best possible.  

For the Hamiltonicity of balanced bipartite graphs, 
Li and Ning \cite[Theorem 1.10]{LiBinlong} also proved the spectral version of 
the Moon--Moser Theorem \ref{thmmm}, that is, 
the bipartite version of Theorems \ref{thmln16a} and \ref{thmln16b}. 
On the other hand, a bipartite graph is called nearly balanced if $\bigl| |X|-|Y| \bigr| \le 1$. 
Additionally, Li and Ning \cite{LNLAA17} also considered 
the existence of Hamilton path in nearly balanced bipartite graphs. 
Later, Ge and Ning \cite[Theorem 1.4]{Ningbo20}, 
and Jiang, Yu and Fang \cite[Theorem 1.2]{JYF2019} independently 
proved a further improvement on the 
adjacency spectral result of Li and Ning for  
balanced bipartite graphs. 
Correspondingly, 
Li, Liu and Peng \cite[Theorem 4]{LLPLMA18} 
also gave a further improvement 
on the signless Laplacian spectral result of Li and Ning.  
It is noteworthy that Liu, Wu and Lai \cite{LWL2020} 
unified  several former spectral Hamiltonian results 
on balanced bipartite graphs.  
In addition, Lu \cite{Lu2020} 
extended some spectral conditions for the Hamiltonicity
of balanced bipartite graphs. 

\begin{question}
 It is possible and interesting to extend the notations of 
 $k$-edge-Hamiltonian and $k$-Hamiltonian,  
and establish the variants of our results in 
Section \ref{sec2} for the balanced bipartite graphs or nearly balanced bipartite graphs. 
\end{question}

\section*{Acknowledgements}

This work was  supported by
 NSFC (Grant Nos. 11931002, 11671124).  
We would like to thank the anonymous referees for their 
careful reviews.

\frenchspacing

\end{document}